\newtheorem{lemma}{Lemma}
\newtheorem{theorem}[lemma]{Theorem}
\newtheorem{definition}[lemma]{Definition}
\newtheorem{corollary}[lemma]{Corollary}
\theoremstyle{remark}
\newtheorem{remark}[lemma]{Remark}
\title{Classification of real rational knots of low degree in the 3-sphere}
\author{Shane D'Mello}
\address{Department of Mathematics\\Stony Brook University\\Stony Brook, NY 11794, USA}
\email{shane@math.sunysb.edu}
\begin{document}
\maketitle
\begin{abstract}
  \noindent In this paper we classify, up to rigid isotopy, non-singular real rational curves of degrees less than or equal to $6$ in a quadric homeomorphic to the 3-sphere. We also study their connections with rigid isotopy classes of real rational knots in $\mathbb{RP}^3$.
\end{abstract}
  
\section{Introduction}
\subsection{Background}
In the topology of real algebraic varieties the most traditional objects of study were either non-singular varieties (like surfaces) or subvarieties of co-dimension one (curves in the 2-space or surfaces in the 3-space). There were few papers devoted to the topology of curves in the 3-space~\cite{bjorklund}~\cite{encomplex}, and there the ambient 3-space was the real projective space $\mathbb{RP}^3$. From a purely topological point of view, the curves are knots in $\mathbb{RP}^3$. There were also papers devoted to polynomial knots in $\mathbb{R}^3$~\cite{polynomialexamples}~\cite{polynomialcompleting}.

In topology,  the more traditional objects are classical knots which are curves in the 3-sphere $S^3$. The sphere $S^3$ appears as the set of real points of algebraic 3-folds. The simplest of them is the quadric in $\mathbb{RP}^4$ defined by the equation $x_1^2+x_2^2+x_3^2+x_4^2=x_0^2$. Following notations used in topology, we denote this quadric by $S^3$.

In this paper we will study the simplest real algebraic knots in $S^3$: non-singular real rational curves of low degrees. A curve $C\subset S^3$ is rational if there exists a regular map $k:\mathbb{RP}^1 \to \mathbb{RP}^4$ which defines an isomorphism $k:\mathbb{RP}^1 \to C$. Such a map $k$ is defined by formulas ($[s:t]\to [k_0(s,t):k_1(s,t):k_2(s,t):k_3(s,t):k_4(s,t)]$) where $k_0,\ldots,k_4$ are real homogeneous polynomials of degree~$d$ with no common root. 

A map $k$ is not unique, but $C$ defines $k$ up to real projective transformations of the source $\mathbb{RP}^1$. We will call $k$ a (rational) \textbf{parametrization} of $C$.

The same polynomials define a regular map $\mathbb{C}k:\mathbb{CP}^1\to\mathbb{CP}^4$, a parametrization of the set $\mathbb{C}C$ of complex points of $C$. 

A rational curve $C$ is called a \textbf{real rational knot} if $C$ has no singular points (neither real nor complex: i.e. its parametrization $\mathbb{C}k:\mathbb{CP}^1\to\mathbb{CP}^4$ has no double points and at least one of the partial derivatives is non-zero).

A rational curve $C\in S^3$ with singularities is called a singular real rational knot. Real rational knots may be considered from the perspectives of knot theory where the major equivalence relation comes from isotopy. Recall that knots $k_1,\ k_2\in S^3$ are called isotopic if they can be included in a one parameter continuous family $k_t\in S^3$ of classical knots. 

In our real algebraic setup, where all knots are real algebraic curves, the notion of isotopy is modified accordingly to a one-parameter family of non-singular real algebraic curves, that is a path in the space of such curves. 
Such a path is called a \textbf{rigid isotopy}. Observe that a rigid isotopy of real algebraic knots can be lifted to a continuous family of parametrizations. In particular, the degree of the knots (which is the degree polynomials forming the parametrization) is preserved under a rigid isotopy. The notion of rigid isotopy can be defined in a similar way in a more general situation and real algebraic varieties of several classical types were classified up to rigid isotopy (see for example,~\cite{survey}).

In ~\cite{encomplex}, Viro defined the \textbf{encomplexed writhe number} for real rational knots in $\mathbb{RP}^3$. It is a rigid isotopy invariant. The definition can be used to define the writhe number of a knot in the sphere as follows: given a knot $C$ in the $S^3\subset\mathbb{RP}^4$, choose a point  of projection $p$ on the sphere which is not on the knot and project the knot to $\pi_p(C)$ in $\mathbb{RP}^3$. The encomplexed writhe number of $C$ is defined as the writhe number of $\pi_p(C)$. Note that this is independent of the point of projection because the image of knots under two different such projections are rigidly isotopic in $\mathbb{RP}^3$ and so they will have the same writhe number.

\subsection{Main results}
An odd degree rational curve in $\mathbb{RP}^4$ must intersect a generic linear hypersurface in an odd number of real points to preserve the mod $2$ intersection number. This is impossible for odd degree knots contained in the sphere because there are real planes disjoint from the real sphere. Therefore:
\textit{all real rational knots in the sphere are of even degree.} We will prove the following rigid isotopy classification for knots of degrees $2$, $4$, and $6$ in $S^3$:
\begin{theorem}
  The rigid isotopy classes of knots of degrees less than~$6$ can be described as follows:
  \begin{enumerate}
  \item Degree $2$: there is one rigid isotopy class; it has writhe number $0$. All knots are topologically isotopic to the unknot.
  \item Degree $4$: there are two rigid isotopy classes; they have writhe numbers $\pm 2$. Mirror images of each knot lie in separate classes. Each of them are topologically isotopic to the unknot. 
  \item Degree $6$: there is one rigid isotopy class with each of the writhe numbers $0$, $\pm 2$, or $\pm 4$. Their smooth isotopy classes may be described as follows:
    \begin{enumerate}
	  \item writhe $0$: unknot
	  \item writhe $2$: unknot
	\item writhe $-2$: unknot
	\item writhe $4$: trefoil
	\item writhe $-4$: trefoil
	\end{enumerate}
      \end{enumerate}
    \end{theorem}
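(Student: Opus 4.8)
The plan is to classify rational knots of each even degree $d\in\{2,4,6\}$ by analyzing the space of rational parametrizations $k:\mathbb{RP}^1\to S^3\subset\mathbb{RP}^4$, which is an open subset (the non-singular locus) of the space of $5$-tuples of degree-$d$ homogeneous polynomials satisfying the sphere equation, modulo reparametrization of the source $\mathbb{RP}^1$. Since rigid isotopy corresponds to a path in this space, I would compute its connected components. The invariant that distinguishes them is Viro's encomplexed writhe number, which is constant on each component; so the strategy in each degree is to (a) produce explicit normal forms or standard representatives realizing each allowed writhe value, and (b) show there are no further components by exhibiting a path from an arbitrary knot to one of the representatives.

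For degree $2$, a conic in $S^3$ spans a $2$-plane and lies on a circle, so I would show directly that the parameter space is connected and that any such conic can be moved to a standard round circle; the writhe is forced to $0$ by its planarity, and the underlying knot is visibly unknotted. For degree $4$, I expect the computation to reduce to understanding quartic rational curves: I would first use the constraint from the sphere equation $k_1^2+k_2^2+k_3^2+k_4^2=k_0^2$ to reduce the parametrization to a manageable normal form, then identify the two components via the sign of the writhe. The essential point is that the writhe number takes exactly the two values $\pm2$, that each is realized, and that mirror reflection (which negates the writhe) interchanges the two classes while keeping the smooth type an unknot.

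The genuinely hard case is degree $6$, and the main obstacle is the claim that there is a single rigid isotopy class for \emph{each} writhe value $0,\pm2,\pm4$, together with the determination that writhe $\pm4$ yields a trefoil while the rest yield unknots. For this I would first establish which writhe values are admissible (via the range constraints on the encomplexed writhe for degree $6$), then construct explicit sextic parametrizations realizing each value—in particular an explicit sextic whose image is smoothly a trefoil for writhe $\pm4$, and sextics unknotting the smaller writhe values. The difficult direction is completeness: showing any degree-$6$ knot of a given writhe can be connected by a rigid isotopy to its representative. I would attempt this by a projection argument, pushing the knot to $\mathbb{RP}^3$ via $\pi_p$ and invoking the rigid isotopy classification of rational knots in $\mathbb{RP}^3$ together with control over the fibers of the projection, or alternatively by a direct deformation argument that simplifies the parametrization to normal form while staying in the non-singular locus. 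Controlling the non-singularity (no real or complex double points, non-vanishing derivative) throughout these deformations is where the real work lies, since the complex double-point locus can obstruct naive linear homotopies of the coefficients.

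The identification of the smooth knot type in each case I would confirm by computing or visualizing the projection $\pi_p(C)\subset\mathbb{RP}^3$, or better the image in $S^3$, for each representative and reading off its knot type directly; the trefoil for writhe $\pm4$ should follow from an explicit diagram of the sextic's projection, while the unknottedness of the remaining cases follows once the representative is put in a standard planar or nearly-planar position.
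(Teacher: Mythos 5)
Your degree-2 outline matches the paper's, and your degree-4 sketch could in principle be completed (the paper does it crisply: a non-singular quartic in $S^3$ cannot lie in a hyperplane, so the coefficient matrices of two parametrizations differ by an element of $\mathbb{P}GL_5(\mathbb{R})$, which must preserve $S^3$ because it maps enough points of $S^3$ in general position into $S^3$; the two components of $\mathbb{P}O(4,1)$ then give exactly the two mirror classes). But for degree 6 your proposal has a genuine gap at precisely the step you flag as ``the difficult direction'': you propose to prove completeness by connecting an arbitrary sextic knot to a representative, either by projecting to $\mathbb{RP}^3$ and invoking the rigid isotopy classification there, or by deforming the parametrization to a normal form. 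Neither route works as stated. There is no classification of degree-6 rational knots in $\mathbb{RP}^3$ to invoke (Bj{\"o}rklund's stops at degree 5), and if you lower the degree by projecting from a point on the knot, an isotopy downstairs does not lift to $S^3$ unless you maintain, throughout the isotopy, the condition that the intersection points with the blown-up plane lie on an empty conic; controlling exactly this pair condition is the technical core of the problem, not a detail about ``fibers of the projection.'' A direct deformation to normal form runs into the complex discriminant, as you yourself admit, and you offer no mechanism to get past it.

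The key idea missing from your proposal is that the paper never connects non-singular sextic knots to representatives directly; instead it bounds the number of components homologically. It (i) classifies the singular degree-6 knots in $S^3$ with exactly one double point (the walls of the discriminant) by projecting from the double point to pairs (degree-4 knot, plane) in $\mathbb{RP}^3$, using a deformation retract to relax the empty-conic condition and the bi-degree of quartics on a quadric to classify such pairs, obtaining 4 wall classes; (ii) proves that the space of degree-6 rational curves in $S^3$ is a connected 21-dimensional manifold, which is itself nontrivial (a submersion argument via resultants, plus separate connectivity lemmas); and (iii) applies Poincar\'e duality and the long exact sequence of Borel--Moore homology to get $\dim \mathrm{H}^0(X\setminus D;\mathbb{Z}/2)\leq \dim \mathrm{H}_{21}^{BM}(X;\mathbb{Z}/2)+\dim \mathrm{H}_{20}^{BM}(D;\mathbb{Z}/2)\leq 1+4=5$. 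Only after this upper bound do the five explicit knots with writhes $0,\pm 2,\pm 4$ (which you also propose to construct) finish the classification. Without steps (i)--(iii) or some substitute for them, your plan establishes at least five classes but cannot establish at most five, so the theorem is not proved.
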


In fact for degrees  $2$ and $4$ we will prove a stronger result that knots in the same rigid isotopy class are projectively equivalent: any two of them can be transformed from one to the other by a projective linear transformation in $\mathbb{RP}^4$ that preserves $S^3$. 

Besides merely classifying them, we will also relate them to knots in the projective space. Indeed, we will use the classification of knots in the projective space to derive our classification, although we will need to extend the classification in the projective space to pairs of knots and planes that intersect in a special way. 

To construct examples, we will note that for each degree $d$, there are $(m,d/2)$-torus knots for any $m$ which is coprime to $d/2$.  We will also observe that Bj{\"o}rklund's method~\cite{bjorklund} of constructing real rational knots can be lifted to the sphere to provide an exact counterpart to his method of construction. This will at first seem to contradict the classification of curves in $S^3$ which does not allow the figure eight knot to be realized by a degree~$6$ curve. However, section~\ref{impossible} will demonstrate why we need to be careful when constructing curves in $S^3$ because we are not permitted to use general conics (as Bj{\"o}rklund had used in $\mathbb{RP}^3$ setting), only circles. Indeed, the trefoil in the sphere cannot be constructed by perturbing the configuration that Bj{\"o}rklund used, but uses a different configuration.

Finally, in section~\ref{sec:deg8}, we will prove that a degree~6 real rational knot in $\mathbb{RP}^3$ which intersects a plane in a maximum of two real points, is rigidly isotopic to a stereographic projection of a real rational knot in the 3-sphere with one double point.  This will establish a connection on the level of isotopy between real rational knots of degree~6 in $\mathbb{RP}^3$ and real rational knots of degree~8 in $S^3$ with one double point.

Bj{\"o}rklund~\cite{bjorklund} had classified real rational knots in $\mathbb{RP}^3$ up to degree~$5$. It is possible to partially add to that classification by classifying singular knots of degree $6$ in $\mathbb{RP}^3$  with four double points: \textit{up to mirror reflections, there is a bijection between the rigid isotopy classes of degree 6 rational knots with four real double points and chord diagrams with at most four chords}. This method of reducing the rigid isotopy classification to a combinatorial one of classifying chord diagrams, is a straightforward generalization of the rigid isotopy classification of degree~4 curves in the plane shown in~\cite{mine}, by the same author. However, in the case of planar quartics, the number of double points need not be forced, since generic planar quartics have three double points. Since degree~8 curves in $\mathbb{RP}^4$ with 5 double points, lie on the 3-sphere, this method also generalizes to a partial classifcation of those degree~8 knots in the 3-sphere with 5 double points.

\subsection{Outline of the method}
There are several difficulties when trying to classify real rational knots in $S^3$ up to rigid isotopy. In~\cite{bjorklund}, linear transformations were used to simplify knots in $\mathbb{RP}^3$; for knots in $S^3$, we can only use linear transformations that preserve the sphere. This does not help except in the simplest cases like degrees~$4$. In fact the method of classification of degree~$4$ knots in $S^3$ is very similar to the approach used in \cite{bjorklund} to classify rational curves of degree~$3$ in $\mathbb{RP}^3$, which suggests a connection between knots in $S^3$ and knots in $\mathbb{RP}^3$ of lower degree. 

The connection is owing to the stereographic projection. If one considers knots with one singularity, the degree~$d$ knots in $S^3$ with the north pole as its only double point, relate to degree~$d-2$ knots in $\mathbb{RP}^3$ which intersect the plane at infinity in $d-2$ points, $d-4$ of which lie on a conic of signature $0$. More generally we may consider pairs of knots and planes as described in the next two  paragraphs.

Following the notation in~\cite{bjorklund}, let $\mathcal{K}_d$ denote the space of real rational knots of degree~$d$ in $\mathbb{RP}^3$.  $\mathbb{P}\mathrm{Gr}(3,\mathbb{R}^4)$ is the grassmanian of 2-planes in $\mathbb{RP}^3$. Consider the space of pairs $(C,X)\in\mathcal{K}_d\times\mathbb{P}\mathrm{Gr}(3,\mathbb{R}^4)$, where $C$ is a real rational knot of degree~$d$ in $\mathbb{RP}^3$ and $X$ is a real plane in $\mathbb{RP}^3$. Denote by $\mathcal{P}_{d,m}$ the subspace of those pairs such that $\mathbb{R}C$ and $\mathbb{R}X$ intersect in exactly $m$ real points, where $m=0$ or $m=2$. 

Define $\mathcal{P'}_{d,m}\subset\mathcal{P}_{d,m}$ to be the pairs $(C,X)$, such that $d-4$ points of intersection of $\mathbb{C}C$ and $\mathbb{C}X$ lie on a conic in $X$ of signature  $0$. 

In theorem~\ref{maincorrespondence} of section~\ref{stereographic}, we will prove that the space of rational knots of degree~$d$ in $S^3$ with exactly one double point, is a double covering of $\mathcal{P'}_{d-2,m}$ where $m=0$ if the double point is solitary, and $m=2$ if it is not. The two curves in each fibre of the double covering are rigidly isotopic to the mirror images of each other.  

It will be difficult to extend an isotopy of knots in $\mathbb{RP}^3$ to an isotopy of pairs in $\mathcal{P'}_{d,m}$ because of the condition imposed on the intersection of the knot and the plane. However, in section~\ref{retract}, we will show that when $d\leq 6$, then $\mathcal{P'}_{d,m}$ is a strong deformation retract of $\mathcal{P}_{d,m}$.  This simplifies the restriction on the intersection, however maintinaining the right number of real intersection points is still difficult. For degree~$6$, we will be able to overcome this difficulty because they relate to degree~$4$ knots in $\mathbb{RP}^3$, which always lie on a quadric.  Knowing the bi-degree of the degree~4 knot on the quadric will make it easier to classify the pairs of knots and planes which intersect in a fixed number of real points. 

While this will classify degree~$6$ knots with one double point, we obtain the non-singular knots by perturbing these. By using the Borel-Moore homology, we will that the number of rigid isotopy components cannot exceed 5. In order to obtain this bound, we will be faced with another difficulty: that the space of rational knots of degree~$6$ in $S^3$ is a connected manifold. This problem is trivial for knots in a projective space, since the space of knots of a fixed degree in $\mathbb{RP}^3$ is itself a projective space. We will solve it for rational knots in $S^3$ in section~\ref{manifold}. Finally, we will construct 5 rational knots of degree~6 in the sphere with different writhe numbers, to obtain a representative for each of the 5 rigid isotopy classes. 

\section{Degrees 2 and 4}
\subsection{Degree 2 knots}
Degree~$2$ knots in the sphere also lie on a two dimensional plane in $\mathbb{RP}^4$ because any three points on the curve define a pencil of planes; it has to lie on the intersection of the planes from the pencil because it intersects each plane in more than $2$ points. The intersection is a two dimensional plane on which our knot lies and so it is a planar conic which is non-empty. It is well known that non-empty non-singular planar conics are projectively equivalent and so the knot is rigidly isotopic to the unknot.   

\subsection{Degree 4 knots} 
\begin{lemma}
  \label{deg4lin}
  A degree~$4$ knot in $S^3$ lies on a linear hypersurface of $\mathbb{RP}^4$ if and only if it has a double point. 
\end{lemma}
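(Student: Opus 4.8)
The plan is to complexify and split the biconditional according to whether the complex curve $\mathbb{C}C$ spans all of $\mathbb{CP}^4$ or only a hyperplane. The first observation is that, since $C$ is real, the linear span of $\mathbb{C}C$ is invariant under complex conjugation and hence is defined over $\mathbb{R}$; therefore $C$ lies on a real linear hypersurface of $\mathbb{RP}^4$ if and only if $\mathbb{C}C$ fails to span $\mathbb{CP}^4$. This reduces the lemma to a statement about $\mathbb{C}C$ together with the real quadric $S^3$.

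For the implication that a double point forces degeneracy, I would argue by contrapositive. Suppose $\mathbb{C}C$ spans $\mathbb{CP}^4$. Then it is an irreducible nondegenerate curve of degree $4$ in $\mathbb{CP}^4$, and since its degree equals the dimension of the ambient space it must be a rational normal quartic. A rational normal curve is smooth (it is projectively equivalent to $[s:t]\mapsto[s^4:s^3t:s^2t^2:st^3:t^4]$, which is an embedding), so $\mathbb{C}C$ has no double point. Hence a curve with a double point cannot span $\mathbb{CP}^4$, and so it must lie on a hyperplane.

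For the converse I would use that $C$ lies on $S^3$. Assume $C$ lies on a real hyperplane $H$, so that $\mathbb{C}C\subset Q:=\mathbb{C}S^3\cap\mathbb{C}H$. First note that $\mathbb{C}C$ cannot be planar: $\mathbb{C}S^3$ is a smooth quadric $3$-fold and contains no $2$-plane, so its intersection with a plane is a conic, which cannot contain the irreducible quartic $\mathbb{C}C$. Thus $\mathbb{C}C$ spans $\mathbb{C}H$ and $Q$ is a genuine quadric surface. Since $\mathbb{R}C\ne\varnothing$ (it is a knot) and $\mathbb{R}C\subset\mathbb{R}Q\subset S^3$, the real locus $\mathbb{R}Q$ contains the one-dimensional set $\mathbb{R}C$; among the possibilities for a hyperplane section of a sphere (a $2$-sphere, a point, or the empty set) only the $2$-sphere can occur, so $\mathbb{R}H$ meets $S^3$ transversally and $Q$ is a smooth quadric surface whose real points form $S^2$.

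The key step is then a reality argument on the rulings of $Q$. Because $\mathbb{R}Q$ is a $2$-sphere (the quadric has signature $(3,1)$), the two rulings of $Q\cong\mathbb{CP}^1\times\mathbb{CP}^1$ are exchanged, rather than preserved, by complex conjugation. Consequently the divisor class of $\mathbb{C}C$, of bidegree $(a,b)$ with $a+b=\deg C=4$, must be conjugation invariant, forcing $a=b=2$. A curve of bidegree $(2,2)$ on a smooth quadric has arithmetic genus $(2-1)(2-1)=1$, whereas $\mathbb{C}C$ is rational of geometric genus $0$; the discrepancy forces a singular point, that is, a double point. I expect this reality argument — ruling out the smooth bidegrees $(1,3)$ and $(3,1)$, which do occur for complex rational quartics in $\mathbb{P}^3$ — to be the main obstacle, since it is exactly the place where the hypothesis $C\subset S^3$, and not merely $C\subset\mathbb{RP}^3$, is essential.
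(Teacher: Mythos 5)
Your proof is correct, but it takes a genuinely different route from the paper's in both directions. For ``double point $\Rightarrow$ hyperplane'' the paper argues directly and more elementarily: the double point together with three further points of the knot determines a hyperplane meeting the quartic in at least five points counted with multiplicity, so the curve must lie in it; you instead invoke the classification of nondegenerate curves of minimal degree (an irreducible nondegenerate quartic in $\mathbb{CP}^4$ is a rational normal curve, hence smooth), which is correct but leans on a heavier classical theorem. For the converse, the paper projects $C$ stereographically from a point of $S^3$ lying on the hyperplane but not on the curve: the image is a planar rational quartic, hence carries $\frac{(4-1)(4-2)}{2}=3$ double points, and since imaginary ones come in conjugate pairs at least one is real, so $C$ is singular. (Implicit there is that a real double point of the image must come from a real singular point of $C$; this holds because double points created by projecting from a point of the sphere land on the empty conic and are therefore imaginary --- exactly the geometric fact your argument makes explicit.) Your version instead places $C$ on the quadric surface $Q=\mathbb{C}H\cap\mathbb{C}S^3$, whose real locus you correctly identify as a $2$-sphere, so that conjugation swaps the two rulings, forcing bidegree $(2,2)$, after which adjunction ($p_a=1$ versus geometric genus $0$) forces a singular point. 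This is in substance the machinery the paper itself develops later for degree~$6$ --- Lemma~\ref{bidegree22} (no rational $(2,2)$ curves, by adjunction) and the lemma that a degree~$4$ real rational curve cannot lie in a real sphere --- so your proof anticipates those lemmas rather than following the paper's own argument here. What your route buys is transparency about where the hypothesis $C\subset S^3$ is essential: it is exactly what excludes the smooth bidegrees $(1,3)$ and $(3,1)$, which do occur in $\mathbb{RP}^3$ and are why Bj{\"o}rklund's degree-$4$ knots there can be nonsingular. What the paper's route buys is brevity and elementary tools. One small imprecision shared by both: the singularity produced need only be a node or a cusp, both of which the paper subsumes under ``double point.''
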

\begin{proof}
Degree~$4$ knots in the sphere  with double points, lie on a linear hypersurface. This is because any double point on the knot along with three other points define a  linear hypersurface in $\mathbb{RP}^4$ which intersects the knot in five points (counting multiplicity, because of the double point). So the singular knot is forced to lie on the linear hypersurface. 

Conversely, if a curve lies on a linear hypersurface in $\mathbb{RP}^4$, then it must be singular. This can be seen by projecting it  to $\mathbb{RP}^3$ via the stereographic projection, from a point on the linear hypersurface but not on the curve. It will project to a degree~$4$ curve on a plane. The complexification of this rational curve must have 3 double points because any planar rational curve of degree~$d$ must have $\frac{(d-1)(d-2)}{2}$ double points. Since the curve is real, any imaginary double points must occur in pairs. Therefore, at least one double point is real and the original curve is not a knot. 
\end{proof}

Observe that changing the parametrization of a knot $C$ parametrized by $k: \mathbb{RP}^1\to S^3 \subset \mathbb{RP}^4$ means composing $k$ by a transformation of $\mathbb{RP}^1$, namely a transformation in $\mathbb{P}GL(2,\mathbb{R})$ which has two connected components that correspond to transformations that either retain or reverse the orientations respectively. Therefore changing the parametrization provides a knot which is either the same or reverses its orientation. 

Projective transformations that preserve the $3$-sphere form the projectivization of the transformations that preserve the bilinear form $x_1^2+x_2^2+x_3^2+x_4^2-x_0^2$, namely the indefinite orthogonal group $O(4,1)$. $O(4,1)$ has four components but after projectivizing it has two components. One component contains the identity whereas the other component contains the reflection. Denote the projectivization of $O(4,1)$ by $\mathbb{P}O(4,1)$. Given any curve $C$ in $S^3$, its image under a transformation $A\in \mathbb{P}O(4,1)$ is either isotopic to itself or to its mirror image. 

Recall that a finite set of points in $\mathbb{RP}^n$ is said to be in \textit{general position} if there is no subset with $n+1$ points that all lie on a linear hypersurface. The following lemma will show that any projective transformation will do:
\begin{lemma}
 A transformation in $\mathbb{P}GL_5(\mathbb{R})$ that takes at least $14$ points in general position in $S^3$ to points within $S^3$ is in $\mathbb{P}O(4,1)$.
\end{lemma}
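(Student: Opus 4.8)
The plan is to use the fact that $S^3$ is itself a quadric, so that comparing it with its image under the transformation reduces the problem to the geometry of quadratic forms. Write $q(x)=x_1^2+x_2^2+x_3^2+x_4^2-x_0^2$ for the form defining $S^3$, and let $M\in GL_5(\mathbb{R})$ be a matrix representing the given transformation $A$. The space of quadratic forms on $\mathbb{R}^5$ is the $15$-dimensional space of symmetric $5\times 5$ matrices, so the quadrics in $\mathbb{RP}^4$ form a $\mathbb{P}^{14}$, and the condition that a quadric pass through a fixed point is a single linear equation on the coefficients of its form. The crucial input is that $14$ points in general position impose $14$ \emph{independent} such conditions, so that the space of quadratic forms vanishing at all of them is exactly one-dimensional: a unique quadric passes through $14$ general points of $\mathbb{RP}^4$.

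Granting this, I would consider the second quadratic form $q\circ M$, whose real zero locus is the quadric $A^{-1}(S^3)$. The hypothesis that $A$ carries the $14$ points $p_1,\dots,p_{14}$ into $S^3$ says precisely that $q(Mp_i)=0$ for every $i$, i.e.\ that $q\circ M$ vanishes at all $14$ points; and $q$ itself vanishes at them because they lie on $S^3$. Both forms therefore lie in the one-dimensional space of forms vanishing at the $p_i$, so $q\circ M=\lambda q$ for some nonzero scalar $\lambda$. Thus $M$ preserves the bilinear form up to scale, and its class lies in $\mathbb{P}O(4,1)$. To make the passage between quadrics and forms rigorous I would note that the real locus $S^3$ is Zariski dense in the complex quadric $\{q=0\}$ (it is a nonempty nonsingular real quadric), so that an equality of real point sets already forces proportionality of the defining forms; no ambiguity is introduced by working with the real three-sphere rather than with the complex quadric.

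The main obstacle is the independence claim, since everything else is formal. Passing through one point is a single linear condition, so $14$ points always impose \emph{at most} $14$ conditions and the forms vanishing on them always fill a space of dimension at least $15-14=1$; the delicate point is to rule out dimension $\geq 2$, which is exactly the statement that the $14$ conditions are independent. This is where general position must be used, and one must be careful that the relevant notion is general position \emph{with respect to quadrics}, not merely with respect to hyperplanes: for instance $14$ points lying on a rational normal quartic are in linearly general position yet lie on a six-dimensional system of quadrics and so fail to impose independent conditions. I would therefore prove that a set of $14$ points in general position not lying on such an exceptional configuration imposes independent conditions on quadratic forms, and verify that a generic choice of $14$ points of $S^3$ meets this requirement, which is what one has available in the intended applications.
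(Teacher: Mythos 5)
Your argument has exactly the same skeleton as the paper's proof: both $q$ and $q\circ M$ vanish at the $14$ points, and one wants to invoke ``a unique quadric passes through $14$ points in general position'' to conclude that the two quadrics coincide, hence that $M$ preserves the form up to a (necessarily positive, by signature) scalar and so lies in $\mathbb{P}O(4,1)$. The difference is that the paper asserts the uniqueness claim outright, while you isolate it as the delicate step --- and your doubt is justified. With the paper's stated definition of general position (no five points on a hyperplane), the claim is false for exactly the reason you give: $14$ distinct points on a rational normal quartic are in linearly general position (Vandermonde), yet the quadratic forms vanishing at them contain the full $6$-dimensional space of forms vanishing on the curve (restriction of quadrics to the curve lands in the $9$-dimensional space of degree-$8$ binary forms and is surjective, so the kernel has dimension $15-9=6$). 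In fact not just this proof but the lemma itself, as stated, is false. A nonsingular degree-$4$ real rational knot $C\subset S^3$ is such a curve, since by lemma~\ref{deg4lin} it spans $\mathbb{RP}^4$; inside the $6$-dimensional space of forms vanishing on $C$, a small perturbation of the form defining $S^3$ is still nondegenerate of signature $(4,1)$, so there is a quadric $Q\neq S^3$, projectively equivalent to the sphere, containing $C$. Any $A\in\mathbb{P}GL_5(\mathbb{R})$ with $A(Q)=S^3$ then maps $14$ points of $C$ --- which are in general position in $S^3$ --- into $S^3$, yet $A\notin\mathbb{P}O(4,1)$ because $A^{-1}(S^3)=Q\neq S^3$.

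This is not a pedantic distinction, because the paper's application of the lemma (the projective classification of degree-$4$ knots) feeds it $14$ points chosen on a degree-$4$ knot, i.e.\ on a rational normal quartic --- precisely the exceptional configuration you identified. Your proposed repair (demand that the points impose independent conditions on quadrics) does make the statement true, and the rest of your argument --- proportionality of $q$ and $q\circ M$, the signature argument, and the Zariski-density remark needed to pass from equality of real point sets to proportionality of forms --- is sound; the independence of conditions for a truly generic $14$-tuple, which you leave unproved, is standard. But your closing expectation that a generic choice of points ``is what one has available in the intended applications'' is exactly what fails here: the application hands you points on the exceptional curve, so the degree-$4$ theorem needs a genuinely different argument (one must show that \emph{some} projective equivalence between the two knots can be chosen in $\mathbb{P}O(4,1)$; the example above shows that not every projective equivalence between knots in $S^3$ preserves $S^3$).
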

\begin{proof}
 The images of the points are also in general position because the inverse of the transformation would pull back any linear relations satisfied by the images. The images lie in the image of the sphere (another quadric), but are given to be in the original sphere. The sphere and its image must coincide because $14$ points in  general position uniquely define a quadric in $\mathbb{RP}^4$. So the transformation preserves $S^3$.
\end{proof}

\begin{theorem}
Any two degree $4$ real rational knots in the $3$-sphere are projectively equivalent. Therefore, there are two rigid isotopy classes which are mirror images of each other. 
\end{theorem}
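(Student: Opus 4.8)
The plan is to show that every non-singular degree~$4$ knot is a rational normal quartic, deduce that any two such knots differ by a transformation in $\mathbb{P}GL_5(\mathbb{R})$, and then upgrade this transformation to one that preserves $S^3$ using the preceding lemma.

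First I would observe that a degree~$4$ knot $C$ is non-degenerate, i.e.\ it lies on no linear hypersurface of $\mathbb{RP}^4$: this is exactly the content of Lemma~\ref{deg4lin}, since a knot has no double point. Equivalently, if $k=[k_0:\cdots:k_4]$ is a parametrization, the five binary quartics $k_0,\ldots,k_4$ are linearly independent. As the space of homogeneous degree~$4$ forms in $s,t$ is $5$-dimensional, the $k_i$ form a basis, so there is a unique $M\in GL_5(\mathbb{R})$ carrying the monomial basis $\{s^4,s^3t,s^2t^2,st^3,t^4\}$ to $\{k_0,\ldots,k_4\}$. Hence $C$ is the image under $M$ of the standard rational normal quartic $\nu(\mathbb{RP}^1)$, where $\nu[s:t]=[s^4:s^3t:s^2t^2:st^3:t^4]$. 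Applying this to two knots $C_1,C_2$ with associated matrices $M_1,M_2$, the transformation $A=M_2M_1^{-1}\in\mathbb{P}GL_5(\mathbb{R})$ satisfies $A(C_1)=C_2$. Reparametrizing the source only permutes the points of the set $C_i$ and so does not affect this argument.

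Next I would promote $A$ to a sphere-preserving transformation. Here I use the defining property of a rational normal curve: no hyperplane of $\mathbb{RP}^4$ meets it in more than $4$ points, so any five distinct points on it are linearly independent, and therefore \emph{any} finite set of distinct points on $C_1$ is in general position. I would select $14$ distinct points on $C_1$; they lie on $S^3$ and are in general position, and their images under $A$ are $14$ distinct points lying on $C_2\subset S^3$. By the preceding lemma, $A\in\mathbb{P}O(4,1)$. Thus $C_1$ and $C_2$ are projectively equivalent by a transformation preserving the sphere, which proves the first assertion. I expect this passage from $\mathbb{P}GL_5$-equivalence to $\mathbb{P}O(4,1)$-equivalence to be the main point of the argument, and it is precisely what the general-position property of the rational normal quartic together with the $14$-point lemma are designed to deliver.

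For the rigid isotopy count, recall that $\mathbb{P}O(4,1)$ has two connected components. If $A$ lies in the identity component, then a path from the identity to $A$ applied to $C_1$ is a rigid isotopy, since projective images of a knot remain knots; so $C_1$ and $C_2$ are rigidly isotopic. If instead $A$ lies in the other component, then $C_2$ is rigidly isotopic to the mirror image of $C_1$. Hence there are at most two rigid isotopy classes, interchanged by reflection. To confirm that these two classes are genuinely distinct, I would invoke the encomplexed writhe number, a rigid isotopy invariant that reverses sign under mirror reflection: a degree~$4$ knot has writhe $\pm2$, as computed from an explicit representative, so the two classes cannot coincide. This yields exactly two rigid isotopy classes, which are mirror images of each other.
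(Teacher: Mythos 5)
Your proposal is correct and follows essentially the same route as the paper: non-degeneracy via Lemma~\ref{deg4lin}, the coefficient-matrix (rational normal quartic) argument to get a $\mathbb{P}GL_5(\mathbb{R})$ equivalence, the $14$-point general-position lemma to land in $\mathbb{P}O(4,1)$, and the two components of $\mathbb{P}O(4,1)$ to sort knots into a class and its mirror. You are in fact slightly more careful than the paper, which leaves the appeal to the $14$-point lemma implicit and does not separately verify (as you do via the writhe $\pm 2$) that the two classes are genuinely distinct.
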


\begin{proof}
  Let $k_1, k_2: \mathbb{RP}^1\to S^3 \subset \mathbb{RP}^4$ each denote the parametrization of two knots $C_1$ and $C_2$ of degree $4$.  Distinct points on $C_1$ or $C_2$ are now in general position in $\mathbb{RP}^4$ because a linear hypersurface of $\mathbb{RP}^4$ containing more than five of these points would contain the curve for intersecting it in more than four points. This would contradict lemma~\ref{deg4lin}.

  If the $i$th coordinate of a parametrization $k$ of a real rational knot is the polynomial $\sum_{j=0}^d \alpha_j^i s^jt^{d-j}$, then let $A_k$ denote the matrix of coefficients $(\alpha_i^j)$ of $k$. Note that knot defined by $k$ not lying on a linear hypersurface of $\mathbb{RP}^4$ is equivalent to $A_k \in \mathbb{P}GL_5(\mathbb{R})$. Therefore, $A_{k_1} A_{k_2}^{-1} \in \mathbb{P}GL_5(\mathbb{R})$. Observe that $A_{k_1} A_{k_2}^{-1}k_2=k_1$. Therefore, they are projectively equivalent.
  
If $A_{k_1} A_{k_2}^{-1}$ lies in the component of $\mathbb{P}O(4,1)$ that contains the identity, then the knots defined by $k_1$ and $k_2$ are rigidly isotopic, otherwise $k_1$ is isotopic to the mirror image of $k_2$. 
\end{proof}


\subsection{The connection between real rational knots in $S^3$ and real rational knots in $\mathbb{RP}^3$}
\label{stereographic}
The idea of using the fact that non-planar curves are non-singular, is borrowed from Bj{\"o}rklund's paper~\cite{bjorklund}. The behaviour of degree~$4$ knots in $S^3$ is very similar to degree~$3$ knots in $\mathbb{RP}^3$. This connection will be made explicit by using the stereographic projection. Since we are concerned with the stronger notion of rigid isotopy, we will need to keep a track of the intersection of the curve with the blow up of the point of projection.

Consider the sphere $S^3\subset\mathbb{RP}^4$ and a point $p$ on $S^3$. A linear hypersurface in $\mathbb{RP}^4$ which is disjoint from $S^3$ can be treated as a copy of $\mathbb{RP}^3$. The projection map $\pi_p:S^3\setminus p\to\mathbb{RP}^3$ is defined by mapping any point $\alpha$ to the intersection of the plane with the unique line joining $p$ and $\alpha$. The line is unique because any line can transversally intersect a quadric in only two points, one of which is $p$.

The projection can be extended to the complexification $\mathbb{C}S^3\subset\mathbb{CP}^4$. If a point $\alpha$ on $S^3$ is such that the line joining it with $p$ is not tangential to $S^3$, then the line does not intersect any other points of $S^3$. Therefore, $\pi_p$ is a bijection on the set $S^3\setminus\mathbb{C}T_p$, where $T_p$ is the tangent plane to $S^3$ at $p$. It is a rational map on $S^3$ that blows up $p$ to a plane $X_p$ in $\mathbb{RP}^3$.

$\mathbb{C}_p \cap \mathbb{C}S^3$ is a quadric with a singularity at $p$. It is therefore a complex 2-dimensional cone with an apex at $p$. This cone intersects the plane of projection $\mathbb{RP}^2$ in an empty conic lying on $X_p$. Recall that an empty conic  is a conic defined by a polynomial with real coefficients but which does not have any real zeros. 

The complexification $\mathbb{C}C$ of any degree~$d$ rational curve $C$, parametrized by $k$, intersects $\mathbb{C}T_p$ in $d$ points (counting multiplicity). These $d$ points of intersection must lie on the cone defined by $\mathbb{C}_p \cap \mathbb{C}S^3$. If $p$ does not lie on the curve $C$, the closure of $\pi_p(\mathbb{C}C)\setminus \mathbb{C}T_p$ intersects $\mathbb{C}X_p$ in $d$ points that all lie on the empty conic which is the projection of the cone $\mathbb{C}_p \cap \mathbb{C}S^3$. We define the image $\pi_p(C)$ of the curve $C$ under the rational map $\pi_p$ as the rational curve defined by the closure of $\pi_p(\mathbb{C}C\setminus \mathbb{C}T_p)$.  Since the image intersects the plane $X_p$ in $d$ points, the degree of this rational curve is $d$. $\pi_p$ is a bijection between real rational curves in $S^3$ of degree~$d$ and real rational curves in $\mathbb{RP}^3$ of degree~$d$ which intersect the blow-up of $p$ in points that lie on an empty conic. 

By choosing the point of projection to be on the curve $C$, the degree of $\pi_p(C)$ can be made to drop. If the curve $C$ passes through $p$, then $T_p$ intersects $C$ at $p$ with multiplicity $2$, then $d-2$ of them intersect $T_p$ in imaginary points. The closure of $\pi_p(\mathbb{C}C\setminus \mathbb{C}T_p)$ intersects $\mathbb{C}X_p$ in $d-1$ points and is therefore a curve of degree~$d-1$. $d-2$ of these points of intersection lie on the empty conic which is the projection of the cone $\mathbb{C}_p \cap \mathbb{C}S^3$. $\pi_p$ is a bijection between real rational curves in $S^3$ of degree~$d$ and real rational curves in $\mathbb{RP}^3$ of degree~$d-1$ which intersects the blow-up of $p$ in $d$ points, $d-1$ of which lie on an empty conic. 

The branch of the curve around $p$ gets mapped to a branch that intersects the plane at infinity once. This point of intersection is determined by the direction of the tangent line to the curve at $p$.

It will prove more useful to  consider the case where the point $p$ is a double point. 
\begin{theorem}
  \label{maincorrespondence}
  The space of degree~$d$ knots in $S^3$ with one double point is a double covering of $\mathcal{P}'_{d-2,k}$, where $k=0$ if the double point is solitary, and $k=2$ if its not. Given a pair $(C,X) \in \mathcal{P}_{d-2,k}$, the two lifts of it are related: they are each isotopic to the mirror image of the other.
\end{theorem}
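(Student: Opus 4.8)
I present a proof proposal for Theorem~\ref{maincorrespondence}.

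The plan is to realize the correspondence concretely through stereographic projection from the double point together with its inverse, and then to extract the covering structure from the non-uniqueness of that inverse. First I would set up the forward map. Given a degree-$d$ knot $\tilde{C} \subset S^3$ whose only singularity is a double point at $p$, project from $p$; since $p$ has multiplicity $2$ on $\tilde{C}$, the image $C' = \pi_p(\tilde{C})$ has degree $d-2$, and I set $X = X_p$. To see that $(C',X)$ lands in $\mathcal{P}'_{d-2,k}$, note that $T_p$ meets $\tilde{C}$ at $p$ with multiplicity $4$: each of the two local branches lies on $S^3$, so its tangent line lies in $T_p$ and contributes intersection multiplicity $2$. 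The residual intersection points of $\tilde{C}$ with $T_p$ then lie on the cone $\mathbb{C}_p \cap \mathbb{C}S^3$ and project onto the empty conic of $X$, which is exactly the conic condition defining $\mathcal{P}'$. The two branches at $p$ instead project to the two points of $C'\cap X$ lying off the conic; these are real when $p$ is a crunode and complex conjugate when $p$ is solitary, so the number of real intersection points is $k=2$ in the first case and $k=0$ in the second. This is the source of the dichotomy in the statement.

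Next I would construct the inverse. Embedding $\mathbb{RP}^3$ as a hyperplane of $\mathbb{RP}^4$ disjoint from $S^3$ and taking the empty conic of the pair as the absolute conic at infinity, inverse stereographic projection is a quadratic rational map $\sigma$ whose base locus is precisely that conic. For $(C',X)\in\mathcal{P}'_{d-2,k}$ the curve $C'$ meets the conic in $d-4$ points, so $\deg\sigma(C') = 2(d-2) - (d-4) = d$; the base points are exactly what cuts the naive degree $2(d-2)$ down to $d$, and the $\mathcal{P}'$-condition is what prevents $\sigma(C')$ from acquiring further singularities. The two off-conic points of $C'\cap X$ are blown down to a single point, the pole, producing one double point whose type is dictated by whether those two points are conjugate or real. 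Thus $\sigma(C')$ is a degree-$d$ knot in $S^3$ with exactly one double point of the prescribed kind, and $\pi_p\circ\sigma=\mathrm{id}$ shows it is a genuine preimage.

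The double cover then comes from the non-uniqueness of $\sigma$: reconstructing $S^3$ from $(C',X)$ leaves a two-fold choice, namely the line polar to $X$ with respect to $S^3$ meets $S^3$ in two poles $p,p'$, and projection can be inverted from either. This produces two knots whose double points sit at $p$ and $p'$ respectively, both projecting back to $(C',X)$. Writing both reconstructions in coordinates, they differ by the projective linear involution of $\mathbb{RP}^4$ exchanging the two poles; I would then invoke the earlier description of $\mathbb{P}O(4,1)$ to place this involution in the non-identity component, so that by that discussion the two lifts are mirror images of one another. Finally, continuity and properness of these constructions in families upgrade the set-theoretic two-to-one correspondence into a genuine double covering.

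The main obstacle I anticipate is precisely this last identification: showing that the involution relating the two lifts is orientation-reversing on $S^3$, i.e. that it lands in the reflection component of $\mathbb{P}O(4,1)$ rather than being the orientation-preserving antipodal map. The naive pole-exchange can be realized by a transformation in the identity component, which would only give isotopic lifts, so one must track carefully how the two inverse projections differ as maps into $S^3$ and compute the resulting $5\times 5$ matrix explicitly rather than argue abstractly; this computation is what ultimately forces the mirror relationship. A secondary technical point is verifying that the $\mathcal{P}'$-condition is not merely necessary but sufficient to guarantee that $\sigma(C')$ has exactly one double point and no further degenerations, which is what makes the map a bijection onto the locus of knots that are nonsingular away from a single node.
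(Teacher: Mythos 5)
Your forward construction --- projecting from the double point, noting that $T_p$ meets each of the two branches with multiplicity $2$, that the residual $d-4$ intersection points lie on the cone $\mathbb{C}T_p\cap\mathbb{C}S^3$ and hence project onto the empty conic, and that the two branches give the two off-conic points whose reality decides $k=2$ versus $k=0$ --- is exactly the paper's argument; in fact that projection computation is the \emph{entire} content of the paper's proof, which is silent on both the covering structure and the mirror statement. Your additions are largely sound where the paper says nothing: the inverse projection $\sigma$ is indeed a quadratic map whose base locus is the empty conic, the degree count $2(d-2)-(d-4)=d$ is correct, and the fibre of the covering is correctly identified with the two points $p,p'$ in which the polar line of $X$ meets $S^3$.

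The genuine gap is the step you yourself single out, and it fails in exactly the way you feared. Carry out the computation you defer: take $S^3=\{x_1^2+\cdots+x_4^2=x_0^2\}$, $H=\{x_0=0\}$, $X=\{x_0=x_4=0\}$, so the poles are $p_\pm=[1:0:0:0:\pm1]$ and $\pi_{p_\pm}[x_0:x_1:x_2:x_3:x_4]=[0:x_1:x_2:x_3:x_4\mp x_0]$. The unique transformation $\phi$ with $\pi_{p_-}\circ\phi=\pi_{p_+}$, i.e.\ the one carrying the lift from $p_+$ to the lift from $p_-$ of the \emph{same} pair $(C',X)$, is $\phi[x_0:x_1:x_2:x_3:x_4]=[-x_0:x_1:x_2:x_3:x_4]$, represented by $\mathrm{diag}(1,-1,-1,-1,-1)$. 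This matrix lies in $\{1\}\oplus SO(4)\subset O(4,1)$, a connected subgroup containing the identity, so $\phi$ is in the identity component of $\mathbb{P}O(4,1)$; concretely, $\phi$ restricts to $S^3$ as the antipodal map, which on an odd-dimensional sphere is orientation-preserving and is joined to the identity by rotations. Thus your route proves that the two lifts are projectively equivalent by an identity-component transformation, hence rigidly isotopic --- the opposite of the mirror-image claim. The reflection $r=\mathrm{diag}(1,1,1,1,-1)$, which does lie in the non-identity component and also exchanges the poles, satisfies $\pi_{p_-}\circ r=(r|_H)\circ\pi_{p_+}$, so it relates the lift of $(C',X)$ to a lift of the \emph{mirrored} pair $((r|_H)(C'),X)$, not to the second lift of the same pair; no other fibre-preserving transformation exists, so no choice of involution rescues the argument. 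In short, the mirror assertion of the theorem is not established by your proposal (nor by the paper), and the explicit matrix computation you propose as the fix in fact refutes the intended conclusion along these lines. A smaller unresolved point: you build $\sigma$ from ``the empty conic of the pair,'' but membership in $\mathcal{P}'_{d-2,k}$ only supplies \emph{some} empty conic through the $d-4$ points, whereas lifting into the \emph{fixed} sphere requires them to lie on the particular conic $\mathbb{C}X\cap\mathbb{C}S^3$; your proposal, like the paper, does not close this discrepancy.
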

\begin{proof}
  If $p$ is a double point of the curve $C$, then $T_p$ intersects two branches of $C$ at $p$, each with multiplicity $2$. In that case, $d-4$ of them intersect $T_p$ in imaginary points. The closure of $\pi_p(\mathbb{C}C)\setminus \mathbb{C}T_p$ intersects $\mathbb{C}X_p$ in $d-2$ points, and is therefore of degree~$d-2$. $d-4$ of them lie on the empty conic which is the projection of the cone $\mathbb{C}_p \cap \mathbb{C}S^3$. This will help us to relate rational knots of degree $d$ with one double point, to rational curves of degree $d-2$ in $\mathbb{RP}^3$ that intersect the blow up of the double point in $d-2$ points, $d-4$ of which lie on an empty conic. 
\end{proof}

\subsubsection{Simplifying the hypothesis for degree~$6$}
The main difficulty is in maintaining the right intersection for between a knot and a plane belonging to a pair in $\mathbb{P}_{4,k}$. Recall that we had defined the space $\mathcal{P}'_{4,k}$ to be the space of those pairs $(C,k)$, where $C$ intersects $k$ in $k$ real points, but the imaginary points of intersection need not lie on the empty conic. 

\begin{theorem}
  \label{retract}
  $\mathcal{P}'_{4,k}$ is a deformation retract of $\mathcal{P}_{4,k}$, where $k=0$ or $k=2$. In other words: there is a retraction map $r:\mathcal{P}_{4,k}\to \mathcal{P}'_{4,k}$ which is homotopic to the identity.
\end{theorem}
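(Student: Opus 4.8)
The plan is to exploit the fact, noted in the outline, that every degree-$4$ rational knot $C\subset\mathbb{RP}^3$ lies on a quadric surface $\Sigma_C$. Assigning to a pair $(C,X)$ the plane-section conic $Q_{(C,X)}=X\cap\Sigma_C$ produces a conic in $X\cong\mathbb{RP}^2$ on which all four points of $\mathbb{C}C\cap\mathbb{C}X$ automatically lie, since $\mathbb{C}C\subset\mathbb{C}\Sigma_C$. First I would show that this assignment can be made continuously in $(C,X)$, treating the locus where the quadric through $C$ fails to be unique (there one selects a member of the pencil of quadrics through $C$ by a continuous rule). Then I would reinterpret membership in $\mathcal{P}'_{4,k}$ as a condition on the signature of $Q_{(C,X)}$: the required imaginary intersection points lie on an empty (signature $0$) conic precisely when $Q_{(C,X)}$, restricted to the relevant conjugate pair, is of the empty, i.e.\ definite, type.

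Next I would record the topology of the target at the level of conics. The empty conics in a plane form the projectivization of the open cone of definite symmetric $3\times 3$ forms, which is contractible; more usefully, inside the family of real conics through a fixed conjugate pair the definite ones form a contractible, convex-after-normalization subset. This furnishes a canonical, choice-free way to ``straighten'' a given section conic toward a definite one, which is exactly what a strong deformation retraction requires.

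With these in hand I would build the retraction geometrically, moving the plane while holding $C$ fixed. For $k=0$ the four intersection points are imaginary and $C(\mathbb{R})\subset\Sigma_C(\mathbb{R})$; when $\Sigma_C(\mathbb{R})$ is of ellipsoid type I would flow $X$ monotonically away from $\Sigma_C(\mathbb{R})$ until the plane is disjoint from the real quadric, which makes $Q_{(C,X)}$ definite and places the pair in $\mathcal{P}'_{4,0}$, while disjointness from $\Sigma_C(\mathbb{R})\supset C(\mathbb{R})$ keeps the real intersection number equal to $0$ throughout; the remaining quadric types are handled by the corresponding signature-increasing flow. For $k=2$ the section conic necessarily carries the two real intersection points, so one instead applies the straightening of the previous paragraph to the conjugate pair alone; this case is essentially automatic, since any conjugate pair already lies on an empty conic, and the homotopy can be taken small. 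On $\mathcal{P}'_{4,k}$ the homotopy is the identity at all times, so the construction is a strong deformation retraction, and I would finish by checking continuity in $(C,X)$.

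The main obstacle is controlling the real intersection number during the flow. Because all four intersection points are pinned to the single conic $Q_{(C,X)}$, moving the plane drags them coherently along that conic, and the count of real points can jump only through a tangency of $X$ with $C$ or through a collision of two imaginary points on the complexified conic that releases a real pair. The crux is therefore to choose the path of planes so as to avoid both degenerations---staying transverse to $C$ and, for $k=0$, retreating monotonically from $\Sigma_C(\mathbb{R})$---and this is precisely where the low degree is indispensable: it is the quartic's lying on a quadric that reduces the simultaneous control of four moving intersection points to the single geometric problem of separating a plane from a quadric surface. A secondary technical point, to be handled carefully, is the behavior of the construction where $\Sigma_C$ degenerates or ceases to be unique.
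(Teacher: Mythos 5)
There is a genuine gap, and it is in the very first reduction: you have misread what membership in $\mathcal{P}'_{4,k}$ means, so your retraction aims at the wrong target. The defining condition is \emph{not} that the imaginary intersection points lie on \emph{some} conic of signature $0$ (equivalently, that some conic through them, such as your section conic $Q_{(C,X)}=X\cap\Sigma_C$, be empty). Under that reading the theorem is vacuous: \emph{any} conjugate pair of imaginary points lies on an empty conic --- a fact you yourself invoke to call the $k=2$ case ``essentially automatic,'' and which applies verbatim to the relevant conjugate pair in the $k=0$ case, so your reading gives $\mathcal{P}'_{4,k}=\mathcal{P}_{4,k}$ and there would be nothing to prove. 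What the theorem actually requires --- because this is what Theorem~\ref{maincorrespondence} needs in order to lift the pair $(C,X)$ to a singular degree~$6$ knot in $S^3$ --- is that the designated conjugate pair of points of $\mathbb{C}C\cap\mathbb{C}X$ lie on the \emph{fixed, standard} empty conic in the plane, namely the image of the tangent cone $\mathbb{C}T_p\cap\mathbb{C}S^3$ under the stereographic projection. Making $X\cap\Sigma_C$ definite neither implies nor is implied by that condition, so even if your flow were carried out it would not land in $\mathcal{P}'_{4,k}$.

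Your proposed equivalence also fails on its own terms, which shows concretely that the approach cannot be repaired as stated. By the paper's lemmas, a degree~$4$ real rational knot never lies on a real sphere (ellipsoid type); it lies on a hyperboloid (or, in codimension one, a cone), and the real locus of a hyperboloid or cone contains real lines of its rulings. Since every plane in $\mathbb{RP}^3$ meets every line, \emph{no} real plane section of $\Sigma_C$ is ever empty, so under your translation $\mathcal{P}'_{4,k}$ would contain no pairs at all and could not be a deformation retract of anything; likewise your $k=0$ flow ``away from $\Sigma_C(\mathbb{R})$ until disjoint'' can never terminate (compare Lemma~\ref{affinebidegree}, which shows the knot itself meets every real plane when it has real bi-degree $(1,3)$). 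The paper's proof is entirely different and much shorter: keep the plane (at infinity) and its standard empty conic fixed, let $l$ be the real line spanned by the conjugate pair $z,\bar z$ of intersection points, let $w,\bar w$ be the points where $\mathbb{C}l$ meets the standard conic with $w$ in the same component of $\mathbb{C}l\setminus\mathbb{R}l$ as $z$, and apply to $C$ a continuous family of \emph{real} projective transformations $T_t$ preserving the plane and taking $z\mapsto(1-t)z+tw$. Because a half-plane is convex, the moving pair stays imaginary, and real intersection points stay real, so the path $(T_t(C),X)$ stays in $\mathcal{P}_{4,k}$ and ends in $\mathcal{P}'_{4,k}$; this one-line reality argument is exactly the control of the real intersection number that your proposal identifies as the crux but does not supply.
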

\begin{proof}
  For each line we will need a fixed real point that does not lie on it. But it is possible to define a continuous map $\theta$ from the space of all real lines at the plane at infinity to the space of all real points at the plane at infinity such that each line is taken to a point that does not lie on it. 

  Define a homotopy of maps $[0,1]\times\mathcal{P'}_{4,k}\to\mathcal{P}_{4,k}$ as follows: given a curve $C$ in $\mathcal{P}_{4,k}$, it intersects the plane at infinity in a conjugate pair $z$ and $\bar{z}$ which will define a real line $l$. $\mathbb{C}l$ intersects the standard empty conic in another conjugate pair $w$ and $\bar{w}$ where  we let $w$ denote the point that is in the same component of $\mathbb{C}l\setminus\mathbb{R}l$ as $z$. By the previous paragraph, $\theta(l)$ is a point in the plane at infinity. Define $T_t$ to be the unique real linear transformation that fixes the real points $[1:0:0:0]$ and $\theta(l)$, and maps $z\to (1-t)z+tw$ and $\bar{z}\to (1-t)\bar{z}+t\bar{w}$. Observe that $T_0$ is the identity map, and  $T_1$ takes $z\to w$ and $\bar{z}\to\bar{w}$. Therefore the map $r:\mathcal{P}_{4,k}\to\mathcal{P}'_{4,k}$ defined by $r(C):=T_1(C)$ is a retract which is homotopic to the identity. 
\end{proof}

Therefore, to prove that two degree~$6$ curves in $S^3$, each with one double point, are isotopic, we only need to find a path between their stereographic projections in the space $\mathcal{P}_{4,k}$.
\begin{remark}
  We can use the stereographic projection to classify the degree~4 knots in $S^3$ by connecting them with degree~3 knots in $\mathbb{RP}^3$. 

  A degree~$4$ knot in $S^3$ can be projected by a point on the knot to a knot in $\mathbb{RP}^3$ which intersects the plane at infinity in one real and a pair of conjugate imaginary points. The imaginary points lie on the empty quadric. Any degree~$3$ curve in $\mathbb{RP}^3$ intersects the plane at infinity in one real and a pair of conjugate imaginary points. The conjugate imaginary points can always made to lie on the standard empty conic and therefore any degree~$3$ curve in $\mathbb{RP}^3$ may be lifted to $S^3$ by the stereographic projection. Therefore, the isotopy classes of degree~$4$ knots in $S^3$ correspond to the isotopy classes of degree~$3$ knots in $\mathbb{RP}^3$. Since there are two isotopy classes of degree~$3$ knots in $\mathbb{RP}^3$, there are two isotopy classes of degree~$4$ knots in $S^3$~\cite{bjorklund}. \end{remark}

\section{Degree $6$}
To classify degree~$6$ knots in $S^3$ with one double point, we can relate them to pairs of degree~$4$ knots and planes in $\mathbb{RP}^3$. Bj{\"o}rklund~\cite{bjorklund} had classified degree 4 curves by first classifying the singular ones and then perturbing them. But this cannot be used to extend this to the space of pairs of knots and planes, so we will consider the fact that degree~$4$ curves lie on a quadric. But first we will show how theorem~\ref{maincorrespondence} can be used to obtain the degree~$4$ curves with one double point.

Change bases so that the double point is $[1:0:0:0]$ as the images of either $[0:1]$ and $[1:0]$, or $[1:i]$ and $[1:-i]$, depending on whether it is real or imaginary. Project from this to the plane defined by $x_0=0$. This projection will produce only finitely many double points because the knot is on a quadric and so a double points can only occur when its inverse under the projection is a line on the quadric that passes through $[1:0:0:0]$. At most two lines can pass through $[1:0:0:0]$, namely the ones from the ruling of the quadric. 

As before the projection is a curve degree $2=4-2$, because the common factors of either $st$ or $s^2+t^2$ will pull out. Change the basis so that this degree 2 curve is the standard $[s:t]\to [s^2:st:t^2]$. Since the knot lies on the cone over this curve with the apex as the double point, it must be of the form $[s^3t:s^2t^2:st^3:p_4[s:t]]$ or $[s^2(s^2+t^2):st(s^2+t^2):t^2(s^2+t^2):p_4[s:t]]$ where $p_4[s:t]$ is a polynomial of degree 4. 

In the case where they are real double points, this polynomial may be reduced to either $s^4+t^4$, $s^4-t^4$, $-s^4+t^4$, or $-s^4-t^4$ because all the other coefficients may be brought to zero without forming a double point as the first three coordinates are distinct anyway. We only need to avoid $s$ or $t$ dividing it or else the degree will drop; the presence of both $s^4$ and $t^4$ prevent this. 

In the case where they are solitary double points, we only need to avoid multiples of $s^2+t^2$ which form a co-dimension two hypersurface in the space of polynomials of degree 4. This can therefore be reduced to a standard form of $[s^2(s^2+t^2):st(s^2+t^2):t^2(s^2+t^2):s^4]$.
Therefore the walls are:
\[[s^3t:s^2t^2:st^3:s^4+t^4]\]
\[[s^3t:s^2t^2:st^3:s^4-t^4]\]
\[[s^3t:s^2t^2:st^3:-s^4+t^4]\]
\[[s^3t:s^2t^2:st^3:-s^4-t^4]\]
\[[s^2(s^2+t^2):st(s^2+t^2):t^2(s^2+t^2):s^4]\]

These were precisely the walls considered by Bj{\"o}rklund~\cite{bjorklund}, and he also observed that the first and the third are rigidly isotopic. By perturbing them he showed that \textit{knots of degree $4$ were either the unknot and its image (with writhes $+1$ or $-1$) or the two crossing knot  and its mirror image (with writhes $+3$ or $-3$)}.

\subsection{Classification of $\mathcal{P}_{4,k}$}

Degree~4 curves in $\mathbb{RP}^3$ must lie on a quadric surface because we can always find a quadric surface passing through any 9 points on the curve. The curve can intersect the quadric in more than 8 points only if it lies on it. To classify $\mathcal{P}_{4,k}$, it will prove essential to understand how the knots of degree~$4$ lie on this quadric surface. 

In the complex case, there is only one non-singular quadric. In the real case, there are two non-singular quadrics, which are the sphere and hyperboloid. 

By a change of coordinates, the hyperboloid can be defined by as the zero set of the polynomial $z_0z_3-z_1z_2=0$. It can also be realized as the image of the embedding, $\theta:\mathbb{RP}^1\times \mathbb{RP}^1\to \mathbb{R}Q\subset \mathbb{RP}^3$, which is defined explicitly by the map \[([x_0:x_1],[x_2:x_3])\to [x_0x_2:x_0x_3:x_1x_2:x_1x_3]\] This map can be extended to the complexification, which is known as the Segre embedding. 

For a fixed $[\alpha:\beta]$, the image of $([\alpha:\beta],[x_2:x_3])$ is a line $L_{[\alpha:\beta]}$. Similarly, for a fixed $[\alpha:\beta]$, the image of $([x_0:x_1],[\alpha:\beta])$ is a line $L'_{[\alpha:\beta]}$. The lines $L_{[\alpha:\beta]}$ form a family of rulings $F$, while the lines $L'_{[\alpha:\beta]}$ form a family of rulings $F'$. If two lines are from the same family they are disjoint, and if they are different families they intersect in one point. The homology classes $[\mathbb{R}L]$ and $[\mathbb{R}L']$ with representatives $L$ and $L'$ from different families, generate $\mathrm{H}_1(\mathbb{R}Q)=\mathbb{Z}\oplus\mathbb{Z}$. Similarly the homology classes $[\mathbb{C}L]$ and $\mathbb{C}[L']$ with representatives $L$ and $L'$ from different families, generate $\mathrm{H}_2(\mathbb{C}Q)=\mathbb{Z}\oplus\mathbb{Z}$

By a change of coordinates, a real sphere in the projective space can be defined to satisfy the equation $x_1^2+x_2^2=x_0^2-x_3^2$. The rulings on it are imaginary and can be realized as follows. Each side of $x_1^2+x_2^2=x_0^2-x_3^2$ can be factored to give $(x_1+ix_2)(x_1-ix_2)=(x_0-x_3)(x_0+x_3)$. So the bijection $[x_0:x_1:x_2:x_3]\to [x_1+ix_2:x_0-x_3:x_0+x_3:x_1-ix_2]$ takes the sphere to a complex quadric which satisfies the equation $z_0z_3=z_1z_2$ and is the image of the Segre embedding. This is a bijection on the complexification of the sphere; when the bijection is restricted to the real part, the coordinates of the image also satisfy the conditions that $z_1$ and $z_2$ are real, and that $z_3=\bar{z_0}$. The defined in the previous paragraph will define a ruling. Observe that in this case the two families of rulings are conjugate to each other.  

There are also two singular quadrics: a pair of planes and a cone; the latter has only one singular point whereas the former has infinitely many. 

We now revise the concept of a bi-degree of a rational curve, but more specifically for real rational curves. 

\begin{lemma}
  \label{bidegree}
  Given a curve $f:\mathbb{RP}^1\to Q\subset\mathbb{RP}^3$, there is a map $\tilde{f}:\mathbb{RP}^1\to \mathbb{RP}^1\times \mathbb{RP}^1$ such that $\theta\circ \tilde{f}=f$ as is in the diagram:\\
  \begin{center}\begin{tikzpicture}[node distance=2cm, auto]
    \node (C) {$\mathbb{RP}^1$};
    \node (P) [below of=C] {$\mathbb{RP}^1\times\mathbb{RP}^1$};
    \node (A) [right of=P] {$Q$};
    \draw[->] (C) to node {$f$} (A);
    \draw[->, dashed] (C) to node [swap] {$\tilde{f}$} (P);
    \draw[->] (P) to node [swap] {$\theta$} (A);
  \end{tikzpicture}\end{center}
  \end{lemma}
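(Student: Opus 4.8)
The plan is to define $\tilde f$ as $\theta^{-1}\circ f$, so the entire task reduces to producing a regular inverse of the Segre parametrization $\theta$ on all of $Q$ and then checking that precomposing with the real map $f$ respects the appropriate real structure. Since $\theta:\mathbb{CP}^1\times\mathbb{CP}^1\to\mathbb{C}Q$ is a bijective morphism, the lift, once it exists, is automatically unique, and its regularity will be inherited from that of $f$ and $\theta^{-1}$; this is what makes the bidegree of $f$ well defined in the later arguments.

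First I would write $\theta^{-1}$ explicitly, using the quadric relation $z_0z_3=z_1z_2$ to recover the two ruling parameters of a point $[z_0:z_1:z_2:z_3]\in Q$. Directly from $\theta([x_0:x_1],[x_2:x_3])=[x_0x_2:x_0x_3:x_1x_2:x_1x_3]$ one gets
\[
  [x_0:x_1]=[z_0:z_2]=[z_1:z_3],\qquad [x_2:x_3]=[z_0:z_1]=[z_2:z_3],
\]
each equality being a consequence of $z_0z_3=z_1z_2$. So I would define the first factor of $\tilde f$ by $[z_0:z_2]$, switching to $[z_1:z_3]$ where the former degenerates, and the second factor by $[z_0:z_1]$, switching to $[z_2:z_3]$ similarly. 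The key point to check is that these charts patch to a morphism with no base points: $[z_0:z_2]=[0:0]$ forces $z_0=z_2=0$, and then the point $[0:z_1:0:z_3]$ lies on $Q$ with $(z_1,z_3)\neq(0,0)$, so $[z_1:z_3]$ is defined; the two expressions cannot vanish simultaneously, and an identical argument handles the second factor. Thus $\theta^{-1}$ is a genuine morphism on all of $Q$, and $\tilde f:=\theta^{-1}\circ f$ satisfies $\theta\circ\tilde f=f$.

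It remains to address reality, and this is where the two quadrics behave differently. For the hyperboloid $z_0z_3=z_1z_2$ with real $z_i$, conjugation on $Q$ is the standard one, $\theta^{-1}$ has real coefficients, and composing with the real map $f$ yields a genuine real lift $\tilde f:\mathbb{RP}^1\to\mathbb{RP}^1\times\mathbb{RP}^1$ exactly as stated. For the sphere the two families of rulings are complex conjugate, as recorded by the bijection $[x_0:x_1:x_2:x_3]\to[x_1+ix_2:x_0-x_3:x_0+x_3:x_1-ix_2]$ used above, so conjugation on $\mathbb{CP}^1\times\mathbb{CP}^1$ swaps the two factors; the lift $\theta^{-1}\circ\mathbb{C}f$ still exists as a morphism, but its target carries the twisted real structure $(u,v)\mapsto(\bar v,\bar u)$ rather than the naive product of real projective lines.

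I expect the main obstacle to be precisely this reality bookkeeping in the sphere case: one must read ``$\mathbb{RP}^1\times\mathbb{RP}^1$'' through the conjugate-ruling real structure, since the honest real lift lands in the locus where the second coordinate is the conjugate of the first. The chart-patching that establishes regularity of $\theta^{-1}$ everywhere is routine once the indeterminacy loci are identified as above, so the only genuinely delicate issue is ensuring the lift is compatible with the correct involution on the product.
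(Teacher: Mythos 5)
Your proof is correct, but it takes a genuinely different route from the paper's. You invert the Segre embedding globally: the two ruling projections $[z_0:z_2]=[z_1:z_3]$ and $[z_0:z_1]=[z_2:z_3]$ agree on overlaps precisely because of the quadric relation $z_0z_3=z_1z_2$, and their indeterminacy loci are disjoint on $Q$, so they patch into a morphism $\theta^{-1}$ defined on all of $Q$; then $\tilde f:=\theta^{-1}\circ f$ is regular, unique, and satisfies $\theta\circ\tilde f=f$. The paper instead argues on the level of coordinate polynomials: writing $f=[p_0:p_1:p_2:p_3]$ with $p_0p_3=p_1p_2$, it takes $q_0=\gcd(p_0,p_1)$, deduces by coprimality that $p_0=q_0q_2$, $p_1=q_0q_3$, $p_2=q_1q_2$, $p_3=q_1q_3$ with all $q_i$ real, and sets $\tilde f=([q_0:q_1],[q_2:q_3])$. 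The paper's factorization buys an explicit representation of $\tilde f$ by real homogeneous polynomials, from which the degrees $m$ and $d-m$ of the two factors (hence the bi-degree and the relation $m+n=d$ recorded in the subsequent remark) are immediate. Your route buys conceptual economy: uniqueness and regularity of the lift come for free, the same words work verbatim over $\mathbb{C}$ (which the paper needs for the complex bi-degree), and your observation that conjugation swaps the two rulings of the sphere---so the lift is only compatible with a twisted real structure on $\mathbb{CP}^1\times\mathbb{CP}^1$---is exactly the mechanism behind the paper's later lemma that a degree-$4$ real rational curve cannot lie on a real sphere. Two caveats: since $\theta$ is the \emph{real} Segre embedding, the $Q$ of this lemma is the hyperboloid, so your sphere discussion, while correct, addresses a case outside the statement; and if one wants the bi-degree from your $\tilde f$, one ends up computing its components as $[p_0:p_2]=[p_1:p_3]$ with common factors cancelled, which recovers the paper's factorization anyway.
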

\begin{proof}
  Let $[s:t]\to[p_0(s,t):p_1(s,t):p_2(s,t):p_3(s,t)]$ denote the parametrization of a degree $d$ curve that lies on the quadric defined by $X_0X_3-X_1X_2=0$. Therefore $p_0p_3=p_1p_2$ and so $p_0p_3$ and $p_1p_2$ have the same factors. Let $q_0$ be the factor of highest degree common to $p_0$ and $p_1$ (it must be real since imaginary roots occur in pairs), and denote its degree by $m$. Therefore $q_2=p_0/q_0$  and $q_3=p_1/q_0$ are degree $d-m$ real polynomials.

By definition, $q_2$ divides $p_0$, and therefore it divides $p_1p_2$. If it shares a common factor $r$ with $p_1$ then $q_0r$ is a factor common to $p_0$ and $p_1$, contradicting the fact that $q_0$ was the highest common factor. Therefore, $q_2$ is co-prime to $q_1$ and so it must divide $p_2$ and $q_1=p_2/q_2$ is a degree $d-(m-d)=m$ real polynomial. 

This way $p_0=q_0q_2$,  $p_1=q_0q_3$,  and $p_2=q_1q_2$. We only have to check that $p_3=q_1q_3$, but $q_1q_3=p_1p_2/(q_0q_2)=p_1p_2/p_0$. Since $p_0p_3=p_1p_2$, we get $p_3=q_1q_3$.

$q_0$ and $q_1$ have the same degree $m$; $q_2$ and $q_3$ have the same degree $d-m$. Therefore the map defined by coordinates $[q_0:q_1]$ and $[q_2:q_3]$ are well defined. 
\end{proof}

\begin{definition}
  Given a rational map $f:\mathbb{RP}^1\to Q\subset\mathbb{RP}^3$, consider its corresponding map $\tilde{f}:\mathbb{RP}^1\to \mathbb{RP}^1\times \mathbb{RP}^1$ as defined in the previous lemma. Fix an orientation of each $\mathbb{RP}^1$ in both the domain and co-domain and let  $\pi_1$ and $\pi_2$ denote the projections onto the first and second coordinates respectively. $\pi_i\circ\tilde{f}:\mathbb{RP}^1\to\mathbb{RP}^1$, for $i=1,2$, are maps with degrees $m=\mathrm{deg}(\pi_1\circ\tilde{f})$ and  $n=\mathrm{deg}(\pi_2\circ\tilde{f})$. Then the \textbf{real bi-degree} of the curve is defined to be $(m,n)$. 
\end{definition}

The maps in lemma~\ref{bidegree} can be extended to the complexification. We can therefore define the complex bi-degree of the curve as follows:
\begin{definition}
  Given a rational map $f:\mathbb{CP}^1\to Q\subset\mathbb{CP}^3$, consider its corresponding map $\tilde{f}:\mathbb{CP}^1\to \mathbb{CP}^1\times \mathbb{CP}^1$ as defined in the previous lemma. If $m=\mathrm{deg}(\pi_1\circ\tilde{f})$ and  $n=\mathrm{deg}(\pi_2\circ\tilde{f})$, Then the \textbf{complex bi-degree} of the curve is defined to be $(m,n)$. 
\end{definition}

\begin{remark}
  The complex bi-degree of the curve as defined above is the same as the degrees of the homogeneous polynomials defining the map $\tilde{f}$. This means that if the complex bi-degree of the curve $f$ is $(m,n)$, then $m+n=\mathrm{the\ degree\ of\ the\ rational\ map\ } f$. Note that $m$ or $n$ may be negative when $(m,n)$ denotes the real bi-degree but not when it denotes the complex bi-degree.
\end{remark}

\begin{lemma}
  A curve $C$ in a quadric surface $Q$ cannot have bi-degree $(2,2)$.
  \label{bidegree22}
\end{lemma}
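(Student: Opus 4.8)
The plan is to show that a rational curve of bi-degree $(2,2)$ on the (smooth) quadric $Q$ is forced to acquire a complex double point, so that it cannot be a knot. This parallels the planar genus count $\frac{(d-1)(d-2)}{2}$ already used in the proof of Lemma~\ref{deg4lin}: the analogous statement on $Q\cong\mathbb{CP}^1\times\mathbb{CP}^1$ is that a rational curve of complex bi-degree $(m,n)$ has $(m-1)(n-1)$ double points (the arithmetic genus of a smooth $(m,n)$ curve, which for a rational image is entirely absorbed into nodes). For $(m,n)=(2,2)$ this gives $(2-1)(2-1)=1$ double point, so such a curve is singular and is therefore not a knot.

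To make this self-contained I would argue directly from the parametrization $\tilde f=(f_1,f_2):\mathbb{CP}^1\to\mathbb{CP}^1\times\mathbb{CP}^1$, where $f_1$ and $f_2$ are each of degree $2$. The degree-$2$ map $f_1$ carries a deck involution $\sigma\in\mathrm{PGL}(2,\mathbb{C})$ with $f_1\circ\sigma=f_1$ and exactly two fixed points. Normalizing coordinates so that $\sigma([s:t])=[s:-t]$, I would write $f_2=[a(s,t):b(s,t)]$ with $a,b$ homogeneous of degree $2$ and examine the collision locus $H(s,t)=a(s,t)b(s,-t)-a(s,-t)b(s,t)$. A short computation shows that $H$ is homogeneous of degree $4$ and odd under $t\mapsto -t$, hence of the form $st(\alpha s^2+\beta t^2)$. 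The factor $st$ records only the two fixed points of $\sigma$ (where $f_2(p)=f_2(\sigma p)$ holds trivially), but the remaining factor $\alpha s^2+\beta t^2$ produces a genuine $\sigma$-orbit $\{p,\sigma p\}$ with $p\neq\sigma p$ and $f_2(p)=f_2(\sigma p)$, i.e. a point where $\tilde f(p)=\tilde f(\sigma p)$. This is exactly the predicted double point of the image, so $C$ is not a knot.

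The step I expect to require the most care is the degenerate possibility $\alpha=\beta=0$, i.e. $H\equiv 0$. In that case $f_2$ is $\sigma$-invariant and hence factors as $f_2=\phi\circ f_1$ for some $\phi\in\mathrm{PGL}(2,\mathbb{C})$; then $\tilde f$ is two-to-one onto the graph $\{(x,\phi(x))\}$, which is a curve of bi-degree $(1,1)$ and degree $2$, so the map is not birational onto its image and does not parametrize a degree-$4$ curve at all. Thus this case never yields a genuine bi-degree-$(2,2)$ parametrization, and in every remaining case a double point appears. Finally I would record the real bookkeeping: since $C$ is a real curve with a single node, that node is preserved by complex conjugation and is therefore real, so the obstruction is already visible over $\mathbb{R}$; in any event a complex double point alone already disqualifies $C$ from being a knot.
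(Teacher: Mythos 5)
Your opening paragraph is, in substance, the paper's own proof: the adjunction formula $g=mn-m-n+1$ gives arithmetic genus $1$ for a bi-degree $(2,2)$ curve, so a rational such curve must be singular and hence is not a knot. That part is fine, except for the parenthetical claim that the genus deficit is ``entirely absorbed into nodes'' --- it can equally be absorbed by a cusp --- and this imprecision is exactly where your self-contained elaboration develops a genuine gap.

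The gap is in the claim that whenever $H\not\equiv 0$, the factor $\alpha s^2+\beta t^2$ produces an orbit $\{p,\sigma p\}$ with $p\neq\sigma p$ and $f_2(p)=f_2(\sigma p)$. You treat only the total degeneration $\alpha=\beta=0$, but in the mixed case, say $\alpha=0\neq\beta$, one has $H=\beta st^3$, and every root of $H$ sits at the fixed points $[1:0]$ and $[0:1]$ of $\sigma$; no genuine collision, and hence no double point, exists. This case really occurs: take $f_1=[s^2:t^2]$ and $f_2=[t^2:s^2+st]$; then $H=-2st^3$, the map $\tilde f$ is injective (birational onto its image since $H\not\equiv0$), and its image is a $(2,2)$ curve whose unique singularity is a cusp at $([1:0],[0:1])$, not a node. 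So the assertion ``in every remaining case a double point appears'' is false. The repair is short: writing $a=a_0s^2+a_1st+a_2t^2$ and $b=b_0s^2+b_1st+b_2t^2$, one computes $\alpha=2(a_1b_0-a_0b_1)$ and $\beta=2(a_1b_2-a_2b_1)$, and $a_1b_0-a_0b_1$ is exactly the value of the Wronskian $a'b-ab'$ of $f_2$ at $[1:0]$; thus $\alpha=0$ means $f_2$ is critical at $[1:0]$, which is already a ramification point of $f_1$, so the differential of $\tilde f$ vanishes there and the curve has a cusp. Since the paper's definition of a knot excludes both double points and points where all partial derivatives vanish, this case also disqualifies $C$, and with it your case analysis closes. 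Once repaired, your argument is a genuinely different, more computational route than the paper's elementary alternative, which stereographically projects $C$ from a point of $Q$ off the curve to a rational plane quartic having only the two nodes coming from the two rulings through the center of projection, contradicting the fact that a rational quartic with only nodes must have exactly three.
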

\begin{proof}This easily follows from the adjunction formula~\cite{hartshorne} $g=mn-m-n+1$ for a  genus $g$ curve of bi-degree $(m,n)$ embedded in $\mathbb{CP}^1\times\mathbb{CP}^1$. Curves of bi-degree $(2,2)$ would have to have genus $1$ and therefore cannot be rational. However, in this special case, it may also be seen by the following elementary means.
  
  The stereographic projection from a point on $Q$ which is not on the curve,  will project $C$ to a planar rational curve of degree~$4$. 

  Note that the tangent plane to the point of projection intersects the quadric in a pair of lines. By the definition of the bi-degree, the curve intersects each  of these lines in a pair of points. Each pair projects to the same point at the plane at infinity, giving rise to two nodes (one for each line) at the line at infinity in $\mathbb{RP}^2$. So it projects to a degree~$4$ rational curve with only two nodes, which is impossible because a rational curve with only nodes as its singularity has to have exactly $\frac{(d-1)(d-2)}{2}$ nodes. \end{proof}

\begin{lemma}
  A degree~$4$ real rational curve $C$ cannot lie in a real sphere.
\end{lemma}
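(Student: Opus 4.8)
The plan is to pin down the possible complex bi-degrees of such a curve and then to use the fact, recorded above, that on a real sphere the two families of rulings are complex conjugate to each other. Suppose for contradiction that a degree~$4$ real rational curve $C$ lies on a real sphere $Q$. Since $Q$ is a non-singular quadric, $C$ has a well-defined complex bi-degree $(m,n)$, and by the remark following the definition of complex bi-degree we have $m+n=4$ with $m,n\geq 0$.

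Next I would exploit the reality of $C$. Identify $\mathbb{C}Q$ with $\mathbb{CP}^1\times\mathbb{CP}^1$ via the Segre embedding as in the discussion of the sphere above. There complex conjugation acts by interchanging the two $\mathbb{CP}^1$ factors, precisely because the two rulings $F$ and $F'$ are conjugate: in the coordinates $z_0z_3=z_1z_2$ with $z_1,z_2$ real and $z_3=\bar{z_0}$, conjugation swaps $z_0$ with $z_3$ and hence swaps the two factors. The complex bi-degree is read off from $\tilde{f}=(\pi_1\circ\tilde{f},\pi_2\circ\tilde{f})$, that is, from the degrees of the pairs $[q_0:q_1]$ and $[q_2:q_3]$ produced in Lemma~\ref{bidegree}; interchanging the factors interchanges these pairs and therefore sends bi-degree $(m,n)$ to $(n,m)$. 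Since $C$ is real, it is invariant under conjugation, so $(m,n)=(n,m)$, which forces $m=n$. Together with $m+n=4$ this gives $(m,n)=(2,2)$.

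This is exactly the case excluded by Lemma~\ref{bidegree22}: a rational curve on a quadric cannot have bi-degree $(2,2)$. Hence the assumption is untenable and no degree~$4$ real rational curve lies on a real sphere.

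I expect the main obstacle to be the middle step: making rigorous the claim that complex conjugation interchanges the two Segre factors and that this interchange acts on the bi-degree by swapping $m$ and $n$. Concretely one must check that conjugating the polynomials $[p_0:p_1:p_2:p_3]$ parametrizing $C$ and re-running the factorization of Lemma~\ref{bidegree} returns the pair $[q_2:q_3]$ in the role of $[q_0:q_1]$ and vice versa, so that reality of $C$ genuinely equates the two partial degrees rather than merely relating them through conjugation of coefficients.
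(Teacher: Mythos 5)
Your proof is correct and is essentially the paper's own argument: both rest on the fact that the two families of rulings of the complexified sphere are conjugate to each other, so reality of $C$ forces the bi-degree to be of the form $(m,m)=(2,2)$, which contradicts Lemma~\ref{bidegree22}. The step you flag as delicate is handled in the paper by a slightly cleaner phrasing that avoids re-running the factorization of Lemma~\ref{bidegree}: conjugation preserves $\mathbb{C}C$ and maps each line of one family bijectively onto a line of the other, so the intersection counts of $\mathbb{C}C$ with a line and with its conjugate line coincide, which is exactly the equality $m=n$.
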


\begin{proof}
  The complexification of the sphere is ruled by two families of complex lines. The two families are conjugate to each other. If the complexification $\mathbb{C}C$ of a real curve $C$ intersects a line from one of these families in $m$ points, then it must also intersect the conjugate of that line in $m$ points. This means that it would have to have bi-degree $(m,m)$. This is impossible for degree~$4$ real rational curves which cannot have bi-degree $(2,2)$ because of lemma~\ref{bidegree22}.
\end{proof}

We now note two lemmas that will help us when classifying degree~$4$ curves by reducing the number of real bi-degree cases to be checked to merely two cases.  

\begin{lemma}
  Consider a curve of real bi-degree $(m,n)$ in a curve in the hyperboloid $Q$  defined by $x_0x_3=x_1x_2$. Reflection along the plane $x_1=x_2$ preserves the hyperboloid, but interchanges the family of rulings and therefore changes the curve to one of bi-degree $(n,m)$. 
\end{lemma}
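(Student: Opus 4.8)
The plan is to show that, under the Segre parametrization $\theta$, the reflection is realised by the involution of $\mathbb{RP}^1\times\mathbb{RP}^1$ that interchanges the two factors; once this single identity is in hand, both the ruling assertion and the bi-degree assertion follow immediately. Write the reflection as the coordinate swap $\sigma:[x_0:x_1:x_2:x_3]\mapsto[x_0:x_2:x_1:x_3]$, whose fixed hyperplane is exactly $\{x_1=x_2\}$. First I would check that $\sigma$ preserves $Q$: since $Q$ is cut out by $x_0x_3-x_1x_2$ and $\sigma$ only transposes $x_1$ and $x_2$, the quadratic form is unchanged, so $\sigma(Q)=Q$.

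Next comes the key computation. Let $\tau:\mathbb{RP}^1\times\mathbb{RP}^1\to\mathbb{RP}^1\times\mathbb{RP}^1$ be the factor swap $([u],[v])\mapsto([v],[u])$. Evaluating on a general point gives $\sigma\bigl(\theta([x_0:x_1],[x_2:x_3])\bigr)=[x_0x_2:x_1x_2:x_0x_3:x_1x_3]$, while $\theta([x_2:x_3],[x_0:x_1])=[x_0x_2:x_1x_2:x_0x_3:x_1x_3]$ as well, so $\sigma\circ\theta=\theta\circ\tau$. This identity does all the work: because $\tau$ carries a fibre $L_{[\alpha:\beta]}$ (first factor fixed, family $F$) to a fibre $L'_{[\alpha:\beta]}$ (second factor fixed, family $F'$) and conversely, it shows at once that $\sigma$ interchanges the two rulings $F$ and $F'$.

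To read off the bi-degree, recall that a curve $C=\theta\circ\tilde f$ of real bi-degree $(m,n)$ has canonical lift $\tilde f$ with $\deg(\pi_1\circ\tilde f)=m$ and $\deg(\pi_2\circ\tilde f)=n$. Then $\sigma(C)=\sigma\circ\theta\circ\tilde f=\theta\circ(\tau\circ\tilde f)$, so $\tau\circ\tilde f$ is a lift of $\sigma(C)$ through $\theta$; since the complexified Segre embedding is an isomorphism onto $Q$, this is \emph{the} lift, hence the one computing the bi-degree of $\sigma(C)$. Using $\pi_1\circ\tau=\pi_2$ and $\pi_2\circ\tau=\pi_1$, the two components of $\tau\circ\tilde f$ are $\pi_2\circ\tilde f$ (degree $n$) and $\pi_1\circ\tilde f$ (degree $m$), so $\sigma(C)$ has real bi-degree $(n,m)$.

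The only place warranting care — and the main obstacle — is the bookkeeping built into the definition of real bi-degree, where an orientation of each of the two copies of $\mathbb{RP}^1$ in the codomain is fixed. Here the point is that $\pi_1\circ\tau$ equals $\pi_2$ \emph{exactly as a map} (and likewise $\pi_2\circ\tau=\pi_1$), so the signed degrees transfer verbatim and simply exchange roles; there is no sign ambiguity provided the orientations are fixed once and applied consistently. I would also flag that the uniqueness of the lift $\tilde f$, needed to know that $\tau\circ\tilde f$ really is the lift defining the bi-degree of $\sigma(C)$, is exactly the injectivity of $\theta$ on $Q$ coming from lemma~\ref{bidegree} and the fact that the Segre map is an isomorphism onto $Q$ over $\mathbb{C}$.
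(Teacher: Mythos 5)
Your proof is correct and follows essentially the same route as the paper: both rest on the single identity $\sigma\circ\theta=\theta\circ\tau$, i.e.\ that the reflection in $\{x_1=x_2\}$ corresponds under the Segre embedding to the factor swap of $\mathbb{RP}^1\times\mathbb{RP}^1$, which simultaneously exchanges the rulings and transposes the bi-degree. Your additional bookkeeping (verifying $\sigma(Q)=Q$ and tracking the bi-degree through the uniqueness of the lift $\tilde f$) is sound detail that the paper leaves implicit.
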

\begin{proof}
  When the Segre embedding $\theta:\mathbb{RP}^1\times \mathbb{RP}^1\to Q\subset \mathbb{RP}^3$ is composed with the map $r:\mathbb{RP}^1\times \mathbb{RP}^1\to \mathbb{RP}^1\times \mathbb{RP}^1$ which interchanges coordinates, it reverses the families. Explicitly: $([x_0:x_1], [x_2:x_3])\to ([x_2:x_3], [x_0:x_1])\to [x_0x_2:x_1x_2:x_0x_3:x_1x_3]$. But this is the same as the embedding $([x_0:x_1], [x_2:x_3])\to  [x_0x_2:x_0x_3:x_1x_2:x_1x_3]$ composed by a reflection in the plane given by $x_1=x_2$.
\end{proof}

\begin{lemma}
  Consider a curve $C$ of real bi-degree $(m,n)$ in the hyperboloid $Q$ defined by $x_0x_3=x_1x_2$. Reversing the orientation of one coordinate, results in a curve $\bar{C}$ of real bi-degree $(-m,n)$, which is projectively equivalent to $C$ by a transformation which is in the component of $\mathbb{P}GL_4(\mathbb{R})$ containing the identity. Therefore $\bar{C}$ is rigidly isotopic to $C$. 
\end{lemma}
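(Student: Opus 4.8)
The plan is to realize the orientation reversal of one factor as the restriction to $Q$ of an honest linear projective transformation of $\mathbb{RP}^3$, and then to check two things about that transformation: that it lands in the identity component of $\mathbb{P}GL_4(\mathbb{R})$, and that it sends a bi-degree $(m,n)$ curve to a bi-degree $(-m,n)$ curve. Writing $C=\theta\circ\tilde f$ with $\tilde f=(\tilde f_1,\tilde f_2)$ and $\deg\tilde f_1=m$, $\deg\tilde f_2=n$, I would reverse the orientation of the first factor by precomposing with the involution $\rho\times\mathrm{id}$, where $\rho:\mathbb{RP}^1\to\mathbb{RP}^1$, $[x_0:x_1]\mapsto[x_0:-x_1]$, reverses the orientation of $\mathbb{RP}^1$.

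The first key step is to push $\rho\times\mathrm{id}$ through the Segre embedding. Explicitly $\theta(([x_0:-x_1],[x_2:x_3]))=[x_0x_2:x_0x_3:-x_1x_2:-x_1x_3]$, so that $\theta\circ(\rho\times\mathrm{id})=T\circ\theta$, where $T\in\mathbb{P}GL_4(\mathbb{R})$ is the diagonal transformation $[y_0:y_1:y_2:y_3]\mapsto[y_0:y_1:-y_2:-y_3]$. In particular $\bar C=\theta\circ(\rho\times\mathrm{id})\circ\tilde f=T\circ C=T(C)$, so $\bar C$ is literally the image of $C$ under a projective transformation preserving $Q$. The second step is the bi-degree bookkeeping: the map associated to $\bar C$ is $(\rho\times\mathrm{id})\circ\tilde f=(\rho\circ\tilde f_1,\tilde f_2)$, and since $\rho$ has degree $-1$ on $\mathbb{RP}^1$, multiplicativity of degree gives $\deg(\rho\circ\tilde f_1)=-m$ while $\deg\tilde f_2=n$ is unchanged; hence $\bar C$ has real bi-degree $(-m,n)$.

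It remains to locate $T$ in the identity component and to produce the rigid isotopy. The delicate point, and the one I expect to require the most care, is the component count in $\mathbb{P}GL_4(\mathbb{R})$: because scaling a $4\times 4$ matrix by $\lambda$ multiplies its determinant by $\lambda^4>0$, the sign of the determinant is well defined on $\mathbb{P}GL_4(\mathbb{R})$ and distinguishes its two components, so it is not enough that $T$ merely be orientation preserving in a vague sense. Here $\det T=(1)(1)(-1)(-1)=1>0$, so $T$ lies in the identity component; concretely, rotating the $(y_2,y_3)$-plane through angle $\vartheta$ while fixing $y_0,y_1$ gives a path $T_\vartheta\in SO(4)$ with $T_0=\mathrm{id}$ and $T_\pi=T$. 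Then $t\mapsto T_{\pi t}(C)$ is a one-parameter family of non-singular real rational curves of the same degree (each $T_{\pi t}$ is a projective isomorphism, hence preserves non-singularity and degree) joining $C$ to $\bar C$, which is the required rigid isotopy.
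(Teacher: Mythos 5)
Your proof is correct and takes essentially the same route as the paper: push the reversal of one factor through the Segre embedding to exhibit it as a diagonal projective transformation (two $+1$'s and two $-1$'s), and observe that this transformation lies in the identity component of $\mathbb{P}GL_4(\mathbb{R})$. You additionally supply details the paper only asserts --- the well-definedness of the determinant sign on $\mathbb{P}GL_4(\mathbb{R})$, the explicit rotation path, and the degree bookkeeping $\deg(\rho\circ\tilde f_1)=-m$ --- which strengthens rather than changes the argument.
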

\begin{proof}
  Reversing the orientation of one coordinate is equivalent to composing the Segre embedding $\theta:\mathbb{RP}^1\times \mathbb{RP}^1\to Q\subset \mathbb{RP}^3$ with the map $r:\mathbb{RP}^1\times \mathbb{RP}^1\to \mathbb{RP}^1\times \mathbb{RP}^1$, given  explicitly by $([x_0:x_1], [x_2:x_3])\to ([x_0:-x_1], [x_2:x_3])\to [x_0x_2:-x_1x_2:x_0x_3:-x_1x_3]$ which is the same as the embedding $([x_0:x_1], [x_2:x_3])\to  [x_0x_2:x_0x_3:x_1x_2:x_1x_3]$ composed by the projective transformation $[x_0:x_1:x_2:x_3]\to[x_0:-x_1:x_2:-x_3]$. This projective transformation is in the same component of $\mathbb{P}GL_4(\mathbb{R})$ that contains the identity. 
\end{proof}

\begin{lemma}
  All degree~$4$ rational curves in the hyperboloid are rigidly isotopic to either one of these curves, or their reflections:
\[[s(s^3+t^3):s(s^3-t^3):t(s^3+t^3):t(s^3-t^3)]\]
\[[s^2(s^2-4t^2):st(s^2-t^2):st(s^2-4t^2):t^2(s^2-t^2)]\]
\label{class4}
\end{lemma}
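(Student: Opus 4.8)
The plan is to use the bi-degree to reduce every such curve to a normal form, and then to prove a connectivity statement for the resulting space of parametrizing maps. First I would record that a degree~$4$ rational curve on $Q$ has complex bi-degree $(m,n)$ with $m+n=4$ and $m,n\geq 1$ (if $m$ or $n$ were $0$ the curve would lie in a single ruling line and have degree~$1$), so by Lemma~\ref{bidegree22} the complex bi-degree is $(1,3)$ or $(3,1)$. The reflection along $x_1=x_2$ interchanges the two families of rulings and hence exchanges $(1,3)$ and $(3,1)$; this is the source of the phrase ``or their reflections,'' and it lets me assume the complex bi-degree is $(1,3)$.

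Next I would exploit that the first factor $\pi_1\circ\tilde f=[q_0:q_1]$ now has complex degree~$1$, so it is an isomorphism of $\mathbb{CP}^1$. Reparametrizing the source (which does not change the image curve) I may assume $\pi_1\circ\tilde f=\mathrm{id}$, so that $\tilde f$ is the graph of a single degree~$3$ real morphism $g=[q_2:q_3]:\mathbb{RP}^1\to\mathbb{RP}^1$ and the curve is $\theta$ applied to this graph, namely $[sq_2:sq_3:tq_2:tq_3]$. The key simplification is that \emph{non-singularity is now automatic}: the graph of a morphism is always an embedded copy of $\mathbb{CP}^1$, the four coordinate polynomials inherit coprimality from that of $q_2,q_3$, and $\theta$ is an embedding. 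Consequently any path of degree~$3$ morphisms $g$ produces a rigid isotopy of the associated curves, and the real bi-degree of the curve is exactly $(1,\delta)$, where $\delta=\deg_{\mathrm{top}}(g)$ is the topological degree of $g$ as a self-map of the circle. Since $q_2,q_3$ have algebraic degree~$3$, $\delta$ is odd with $|\delta|\leq 3$, so $\delta\in\{-3,-1,1,3\}$; the orientation-reversing lemma (reversing one factor) lets me replace $\delta$ by $|\delta|$, leaving the two cases $\delta=1$ and $\delta=3$. Computing $\tilde f$ for the two displayed curves shows that the first has $g=[s^3+t^3:s^3-t^3]$ with $\delta=1$, and the second has $g=[s(s^2-4t^2):t(s^2-t^2)]$ with $\delta=3$, so these are precisely the two models.

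It then remains to prove that the space $U$ of degree~$3$ real morphisms $g$, an open subset of $\mathbb{RP}^7$ (the complement of the resultant hypersurface, where $q_2,q_3$ acquire a common root), has each level set $\{\delta=1\}$ and $\{\delta=3\}$ path-connected; since $\delta$ is locally constant on $U$, this is exactly what pins down the rigid isotopy classes. For $\delta=3$ I would argue that a generic value must have three real preimages all of the same sign, which forces $g$ to be an unramified triple cover, so $q_2$ and $q_3$ each have three real roots that alternate around the circle; the space of such alternating configurations, together with the remaining positive scalar ratio $c_2/c_3$, is connected and joins any such $g$ to the second model. For $\delta=1$ I would reduce the count of real roots: two real roots of $q_2$ can be collided and pushed off into a complex-conjugate pair along a path that avoids the roots of $q_3$ (so the path stays in $U$), and likewise for $q_3$, after which $g$ has a single real zero and a single real pole, and these minimal configurations form a connected family joining any such $g$ to the first model. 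The main obstacle is this last connectivity claim for $\delta=1$: unlike the rigid $\delta=3$ case, several combinatorial types of real root patterns occur, and one must check that each deforms to the one-zero/one-pole model without ever creating a common root of $q_2$ and $q_3$ (which would leave $U$ and drop the degree). Staying in the level set $\{\delta=1\}$ is automatic once the path stays in $U$, so the care is entirely in routing the collisions of real roots away from the resultant hypersurface.
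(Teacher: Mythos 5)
Your proposal is correct and takes essentially the same route as the paper's proof: reduce to bi-degree $(1,3)$ by the reflection lemma, observe that the curve is then the graph $[s:t]\to([s:t],[q_2:q_3])$ of a degree-$3$ real map so that non-singularity is automatic and only coprimality of $q_2,q_3$ must be preserved, and then connect root configurations by colliding real roots into conjugate pairs, with the alternating (unramified covering) configuration giving the second model. Your topological-degree level-set framing is just a repackaging of the paper's real bi-degree together with its Cases 1--3, and your admitted ``obstacle'' for $\delta=1$ is handled in the paper by exactly the collision argument you sketch.
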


\begin{proof}We know that the curve must have bi-degree $(1,3)$ or $(3,1)$. If it has bi-degree $(3,1)$, consider its reflection with bi-degree $(1,3)$. After a change of coordinates, its parametrization $k:\mathbb{RP}^1\times \mathbb{RP}^1$ may be explicitly defined as:
  \[[s:t]\to ([s:t], [p_0:p_1])\]
  where each $p_i$ has degree 3. In that case $k$ is always an embedding, no matter what $p_i
  $'s are as long as they are of degree 3, since the linear part has no double points. Therefore, to ensure that a homotopy is an isotopy, we only need to avoid a common factor in the coordinate polynomials defining the second coordinate of $k$. As long as this is ensured, no singularities can result owing to the linear part. Therefore moving a root of $p_0$ will define a rigid isotopy as long it does not meet with a root of $p_1$. Similarly, moving a root of $p_1$ will define a rigid isotopy as long it does not meet with a root of $p_0$.

  There are three cases to be examined:

  \textit{Case 1 ($p_0$ and $p_1$ have imaginary roots):} Then the polynomials are each characterized by a pair of conjugate imaginary roots and therefore by an element of $\mathbb{CP}^1\setminus\mathbb{RP}^1$. It is easy to shift the complex roots of $p_0$ to $[1:\omega]$ and $[1:-\omega^2]$ and the roots of $p_1$ to $[l:-\omega]$ and $[1:\omega^2]$. Thereafter, there is no obstruction to shifting the real roots of $p_0$ and $p_1$ to $[1:-1]$ and $[1:1]$, thereby showing that all curves of this bi-degree form one isotopy class.
  \[[s:t]\to ([s:t], [s^3+t^3:s^3-t^3])\]

  \textit{Case 2 (either $p_0$ or $p_1$ has imaginary roots):} Assume that $p_0$ has imaginary roots,   and $p_1$ has all real roots, then the only real root of $p_0$ divides $\mathbb{RP}^1$ into only two components. At least two of the roots of $p_1$ are forced to lie in the same component and therefore may be combined to form a double root and then converted to an imaginary pair, reducing it to the above case. 

  \textit{Case 3 ($p_0$ and $p_1$ have only real roots):} The roots of $p_0$ divides $\mathbb{RP}^1$ into three components. If at least two of the roots of $p_1$ lie in the same component, then this pair can be combined to form an imaginary pair, thereby reducing this to the above case. 

  So finally the only case left is where both polynomials have real roots, and none of the roots of $p_1$ lie in the same component of $\mathbb{RP}^1\setminus \{\mathrm{roots\ of\ }p_0\}$. We may move the roots so that the roots of $p_0$ are $[1,-2], [0:1], [1,2]$ while the roots of $p_1$ are  $[1,-1], [1:0], [1,1]$, giving rise the following map: 

  \[[s:t]\to ([s:t], [s(s^2-4t^2):t(s^2-t^2)])\]

  Composing each of the above four curves on $\mathbb{RP}^1\times\mathbb{RP}^1$ with the Segre map $([x_0:x_1], [x_2:x_3])\to [x_0x_2:x_0x_3:x_1x_2:x_1x_3]$  gives a parametrization for each of the four isotopy classes of degree~$4$ up to mirror reflection.

They are as follows:
\[[s(s^3+t^3):s(s^3-t^3):t(s^3+t^3):t(s^3-t^3)]\]
\[[s^2(s^2-4t^2):st(s^2-t^2):st(s^2-4t^2):t^2(s^2-t^2)]\]
\end{proof}

When the stereographic projection is applied to a knot of degree $6$ in $S^3$ with one double point, it is sent to a real rational knot in $\mathbb{RP}^3$, which intersects the plane at infinity in two real points if the double point was real and only imaginary points if the double point was solitary.

A non-singular knot in $\mathbb{RP}^3$ may lie on a cone (if it lies on a plane it would have to be singular). However, the following lemma reduces its study to only those knots which lie on a hyperboloid. 

\begin{lemma}
  Consider a pair $(C,X)\in \mathcal{P}_{4,k}$ of a real rational knot $C$ and a plane $X$ so that the $\mathbb{C}C\cap\mathbb{C}X$ has four distinct points. $C$ can be perturbed by an arbitrary small perturbation to lie on a hyperboloid and still intersect the plane $\mathbb{C}X$ in  four distinct points and the same number of real points. 
\end{lemma}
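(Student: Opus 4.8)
The plan is to reduce everything to the case where $C$ lies on a quadric cone, and then to deform the cone to a nearby hyperboloid while dragging $C$ along with it. Since $C$ has degree~$4$ it lies on some quadric $Q\subset\mathbb{RP}^3$, and because $C$ is non-singular and, being a knot, non-planar, $Q$ is neither a pair of planes nor, by the preceding lemma, a real sphere. Hence $Q$ is either a smooth hyperboloid---in which case there is nothing to prove---or a real quadric cone. A rank-$3$ real quadric carrying a real curve must have signature $(2,1)$, so after a real projective change of coordinates I may take $Q$ to be the cone $x_0x_2=x_1^2$ with vertex $v=[0:0:0:1]$. Any degree-$4$ parametrization landing on this cone has the form $[e\alpha^2:e\alpha\beta:e\beta^2:d]$, where $[\alpha:\beta]$ is the reduced (coprime) projection to the base conic, $e$ is a binary form, and $d$ has degree~$4$; counting degrees leaves only $\deg e=0$ (so $\deg\alpha=\deg\beta=2$, the curve misses $v$) or $\deg e=2$, $\deg\alpha=\deg\beta=1$ (the curve passes through $v$).

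Assume first that $C$ misses the vertex, so after scaling $C=[\alpha^2:\alpha\beta:\beta^2:d]$ with $\alpha,\beta$ coprime of degree~$2$. I smooth the cone to the quadrics $Q_\epsilon\colon x_0x_2-x_1^2+\epsilon x_3^2=0$, which for $\epsilon>0$ have signature $(2,2)$ and are therefore hyperboloids, and I look for a nearby parametrization $C_\epsilon=[\alpha^2+\epsilon a:\alpha\beta+\epsilon b:\beta^2+\epsilon c:d]$ lying on $Q_\epsilon$. Substituting and dividing by $\epsilon$, the condition becomes
\[
 \beta^2 a-2\alpha\beta b+\alpha^2 c+d^2+\epsilon(ac-b^2)=0 .
\]
At $\epsilon=0$ this is the linear equation $\beta^2a-2\alpha\beta b+\alpha^2c=-d^2$ for the degree-$4$ forms $a,b,c$; since $\alpha,\beta$ are coprime the forms $\alpha^2,\beta^2$ are coprime of degree~$4$, so the ideal $(\alpha^2,\beta^2)$ contains every binary form of degree $\ge 7$, in particular $d^2$, and the equation is solvable (one may even take $b=0$). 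Because the linearization $(a,b,c)\mapsto\beta^2a-2\alpha\beta b+\alpha^2c$ is already surjective onto the space of degree-$8$ forms, the implicit function theorem upgrades this to an exact solution depending continuously on $\epsilon$ for all small $\epsilon>0$, with $C_\epsilon\to C$. For small $\epsilon$ the curve $C_\epsilon$ is still a genuine degree-$4$ embedding, and since $\mathbb{C}C\cap\mathbb{C}X$ consists of four distinct, hence transverse, points with a prescribed number of them real, the same holds for $\mathbb{C}C_\epsilon\cap\mathbb{C}X$---all of these being open conditions, with real points persisting as real and conjugate pairs as conjugate pairs. Thus $C_\epsilon$ is an arbitrarily small perturbation lying on the hyperboloid $Q_\epsilon$ and meeting $X$ correctly.

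The main obstacle is the vertex case $\deg e=2$: there the analogous first-order equation reads $e(\alpha^2c+\beta^2a-2\alpha\beta b)=-d^2$, which forces $e$ to divide $d^2$---impossible since $\gcd(e,d)=1$---and one checks that a curve through $v$ cannot be deformed off $v$ while remaining on the cone, so this obstruction is genuine for the naive smoothing. I would resolve this case, and indeed reprove the whole lemma uniformly, by a dimension count. The space of degree-$4$ rational curves in $\mathbb{RP}^3$ has dimension~$16$; smooth quadrics form a $9$-dimensional family and the curves of bidegree $(1,3)$ on a fixed one a $7$-dimensional linear system, so the $16$-dimensional family of (quartic, smooth quadric) pairs is dominant, whereas cones form an $8$-dimensional family and the curves on a fixed cone an at most $7$-dimensional family, giving at most $15$. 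Hence curves lying on a hyperboloid are open and dense, while those lying only on a cone (vertex case included) form a subset of positive codimension. Since being a knot and meeting the fixed plane $X$ in four distinct points with a fixed number real are open conditions, an arbitrarily small generic perturbation of $C$ lands it on a smooth quadric while preserving these conditions; as $C$ cannot lie on a sphere, that quadric is a hyperboloid. The explicit deformation above then serves as a concrete model in the generic vertex-free case, which is the one needed in the sequel.
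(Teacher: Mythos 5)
Your fallback argument is exactly the paper's proof. The paper normalizes the cone to $x_0x_2-x_1^2=0$, counts parametrized curves on a fixed cone (dimension $10$) plus the $8$-dimensional family of cones to get a codimension-one subset of the $19$-dimensional space of degree-$4$ parametrizations, and then perturbs off the cone, noting that the perturbation can be taken small enough to preserve the real intersection with $X$ and the distinctness of the four complex intersection points; your count $7+8=15$ inside the $16$-dimensional space of curves is the same computation carried out modulo reparametrization, and your appeal to openness of the knot condition and of the intersection pattern, together with the exclusion of the sphere, mirrors the paper. What you add is the explicit smoothing $x_0x_2-x_1^2+\epsilon x_3^2=0$ with the coprimality and implicit-function-theorem argument; that constructive route is not in the paper. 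One further observation, which you missed: your ``main obstacle'', the vertex case $\deg e=2$, is vacuous under the hypotheses of the lemma. Since the coordinates of $[e\alpha^2:e\alpha\beta:e\beta^2:d]$ have no common root, $\gcd(e,d)=1$, so the curve passes through the vertex $[0:0:0:1]$ at each root of $e$: two distinct real roots of $e$ produce a real double point at the vertex, a conjugate pair of roots produces a (solitary) double point of the complexification, and a double root makes both partial derivatives of the parametrization proportional to $(0,0,0,1)$ there, i.e.\ a cusp. In every case the curve is singular, so a \emph{knot} on a cone never meets the vertex; hence your explicit deformation alone already proves the lemma, and the dimension count is needed only if one wants the statement for singular quartics as well.
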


\begin{proof}
  If $C$ lies on a cone, use a projective transformation to transform the cone to one defined by $x_0x_2-x_1^2=0$. So a parametrization $[p_0:p_1:p_2:p_3]$ of a curve on this cone is $p_0p_2=p_1^2$. This means that the roots of $p_1$ determine the roots of $p_0$ and $p_2$ while $p_3$ can be arbitrary; therefore the dimension of the curves on this cone are $10$. Since the space of cones is of dimension $8$, the total dimension of the space of curves lying on a cone is $18$ and therefore of codimension one in the space of all curves of degree~$4$ in $\mathbb{RP}^3$.

  So by small perturbation, the curve can be made to lie on a hyperboloid. The perturbation may be chosen small enough to not change the real intersection and ensure that the points in the intersection of the complexification are distinct.
\end{proof}
We now use our study of knots of degree~$4$ in $\mathbb{RP}^3$ to study knots of degree~$6$ in the sphere with one double point. 

\subsection{Degree~$6$ knots in $S^3$ with a real double point}
The real part of a real curve in $\mathbb{RP}^3$ is called affine if there is a real plane which is disjoint from it. We first prove the following lemma that will be used later:
\begin{lemma}
  A real curve of real bi-degree $(1,3)$ on a hyperboloid in $\mathbb{RP}^3$ cannot be affine. 
  \label{affinebidegree}
\end{lemma}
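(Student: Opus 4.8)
The plan is to argue by contradiction. Suppose $\mathbb{R}C$ is affine, so that there is a real plane $X\subset\mathbb{RP}^3$ disjoint from $\mathbb{R}C$; I will produce a point of $\mathbb{R}C\cap X$ by an intersection-number computation on the torus $\mathbb{R}Q\cong\mathbb{RP}^1\times\mathbb{RP}^1$, contradicting disjointness. The first step is to record what real bi-degree $(1,3)$ says about how $\mathbb{R}C$ sits on the ruling $F$. Since $m=\deg(\pi_1\circ\tilde f)=1$, the first coordinate $[q_0:q_1]$ of $\tilde f$ is a degree-one rational map, hence a real projective transformation of $\mathbb{RP}^1$, and in particular a bijection. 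Therefore $\mathbb{R}C$ meets every real line $L$ of the family $F$ in exactly one real point; that is, $\mathbb{R}C$ is a section of the $F$-ruling. Reading off the winding numbers from $\deg\pi_1=1$ and $\deg\pi_2=3$, its class in $\mathrm{H}_1(\mathbb{R}Q)=\mathbb{Z}[\mathbb{R}L]\oplus\mathbb{Z}[\mathbb{R}L']$ is $[\mathbb{R}C]=3[\mathbb{R}L]+[\mathbb{R}L']$ (up to a choice of orientations).

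The core step analyzes $X\cap\mathbb{R}Q$, splitting into two cases. If $X$ contains a line $L$ of the family $F$, then since $\mathbb{R}C$ meets every such line, $\emptyset\neq\mathbb{R}C\cap L\subset\mathbb{R}C\cap X$, a contradiction. Otherwise $X$ contains no $F$-line; since any line and any plane in $\mathbb{RP}^3$ intersect in at least one real point, every real $F$-line meets $X$ in exactly one real point, which necessarily lies on $\mathbb{R}Q$. Thus $X\cap\mathbb{R}Q$ is exactly the graph of the induced map $h:\mathbb{RP}^1\to\mathbb{RP}^1$ sending each $F$-line to its unique intersection point with $X$. Writing out the Segre map $\theta$ against the linear equation of $X$ shows $h$ is a projective (degree $\pm1$) map, so $X\cap\mathbb{R}Q$ is a single embedded circle that is a section winding once around each factor, of class $[\mathbb{R}L]+[\mathbb{R}L']$ (again up to orientation).

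Finally I would compute the intersection pairing on the oriented torus $\mathbb{R}Q$ using $[\mathbb{R}L]\cdot[\mathbb{R}L']=1$ and $[\mathbb{R}L]^2=[\mathbb{R}L']^2=0$. The pairing $[\mathbb{R}C]\cdot[X\cap\mathbb{R}Q]$ then evaluates, over all admissible sign choices, to one of $\pm2$ or $\pm4$, and in particular to a nonzero integer. Two cycles on a surface with nonzero algebraic intersection number cannot be disjoint, so $\mathbb{R}C$ must meet $X\cap\mathbb{R}Q\subset X$, contradicting the disjointness of $X$ and $\mathbb{R}C$. Hence no real plane can avoid $\mathbb{R}C$, and $\mathbb{R}C$ is not affine.

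I expect the delicate points to be the following two. First, one must confirm that $X\cap\mathbb{R}Q$ is genuinely a single embedded section, rather than empty, a figure-eight, or two components, so that its homology class can be read off cleanly; this is precisely what forces the case split on whether $X$ contains an $F$-line, and it is where the non-emptiness of the \emph{real} rulings of the hyperboloid is used. Second, and this is the real subtlety, the mod-$2$ intersection pairing vanishes here because $([\mathbb{R}L]+[\mathbb{R}L'])^2=0$, so the argument genuinely requires oriented $\mathbb{Z}$-coefficient intersection numbers together with a fixed choice of orientations; it is exactly the asymmetry between the class $3[\mathbb{R}L]+[\mathbb{R}L']$ of $\mathbb{R}C$ and the class $[\mathbb{R}L]+[\mathbb{R}L']$ of a plane section that yields the nonzero value and makes the proof work for bi-degree $(1,3)$ while failing for a $(1,1)$ curve.
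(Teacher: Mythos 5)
Your proof is correct and takes essentially the same route as the paper's: identify the classes of the curve ($3[\mathbb{R}L]+[\mathbb{R}L']$) and of a real plane section ($\pm[\mathbb{R}L]\pm[\mathbb{R}L']$) in $\mathrm{H}_1(\mathbb{R}Q;\mathbb{Z})\cong\mathbb{Z}\oplus\mathbb{Z}$, and conclude from the nonzero intersection number ($\pm 2$ or $\pm 4$) that no real plane can be disjoint from the curve. The only difference is one of detail: where the paper simply asserts that every real plane meets $\mathbb{R}Q$ in a curve of bi-degree $(\pm 1,\pm 1)$, you justify this via the case split on whether the plane contains a ruling line together with the explicit M\"obius-map computation, and your observation that the argument genuinely needs $\mathbb{Z}$-coefficient (oriented) intersection numbers rather than mod-$2$ ones is correct and worth noting.
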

\begin{proof}
  Any plane $X$ intersects $Q$ in either an ellipse or a distinct pair of lines. In either case, the bi-degree of $\mathbb{R}Q\cap \mathbb{R}X$ is $(\pm 1, \pm 1)$ and it represents the homology class $\pm [L_1]+\pm [L_2]$, where $[L_1]$ and $[L_2]$ are the generators of $\mathrm{H}_1(Q)$ and are each represented by a line in the ruling. The homology class represented by the curve of bi-degree $(1,3)$ is $[L_1]+3[L_2]$, so the intersection number $([L_1]+3[L_2])\circ(\pm [L_1]+\pm [L_2])=\pm [L_1]\circ[L_2]\pm3[L_1]\circ[l_2]$, so the intersection number is either $\pm 2$ or $\pm 4$. 
\end{proof}

\begin{lemma}
  Consider a rational curve $C$ of real bi-degree~$(1,3)$ in a hyperboloid $Q$. All real lines $l$ such that $\mathbb{C}l$ intersects $\mathbb{C}C$ in two (conjugate) imaginary points lie in the same component of $\mathbb{RP}^3\setminus\mathbb{R}Q$.
  \label{onecomponent}
\end{lemma}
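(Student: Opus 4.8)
The plan is to identify each line $l$ in the statement with the real secant line of $\mathbb{C}C$ through a conjugate pair of its imaginary points, and then to detect which component of $\mathbb{RP}^3\setminus\mathbb{R}Q$ contains $l$ by evaluating the sign of the quadratic form defining $Q$. Write $\phi(x)=x_0x_3-x_1x_2$ and let $\beta$ be its polarization. Because $\phi$ has even degree, $\mathrm{sign}\,\phi$ is a well-defined function on $\mathbb{RP}^3\setminus\mathbb{R}Q$, and its two values are exactly the labels of the two components. If a real line $l$ is disjoint from $\mathbb{R}Q$ then $\phi|_l$ has no real zero, so it has a constant sign which equals the label of the component containing $l$. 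Every line $l$ as in the statement passes through a conjugate pair $C(u),C(\bar u)$ with $u\in\mathbb{CP}^1\setminus\mathbb{RP}^1$. Such a line is not contained in $Q$, since a ruling line would meet $\mathbb{C}C$ in one or three points rather than in exactly two; hence $\mathbb{C}l$ meets $\mathbb{C}Q$ in exactly two points, namely the given imaginary pair, and $l$ is disjoint from $\mathbb{R}Q$. Thus the lemma reduces to showing that $\mathrm{sign}(\phi|_l)$ does not depend on $u$.

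I would then compute the sign explicitly. Since the first entry of the real bi-degree is $1$, after reparametrizing the source $\mathbb{RP}^1$ I may take $\pi_1\circ\tilde f=\mathrm{id}$, so that $C[s:t]=\theta([s:t],[p_0:p_1])=[sp_0:sp_1:tp_0:tp_1]$ with $p_0,p_1$ real cubics and $g=p_0/p_1=\pi_2\circ\tilde f$. Setting $A=\mathrm{Re}\,C(u)=\tfrac12\big(C(u)+C(\bar u)\big)$ and using $\phi(C(u))=\phi(C(\bar u))=0$, I get $\phi(A)=\tfrac12\,\beta(C(u),C(\bar u))$, so the component label is $\mathrm{sign}\,\beta(C(u),C(\bar u))$ whenever this is nonzero. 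Expanding the polarization along the parametrization yields the key identity
\[
\beta\big(C(u),C(\bar u)\big)=\pm\,2\,\mathrm{Im}(u)\,\mathrm{Im}\big(p_0(u)\overline{p_1(u)}\big)=\pm\,2\,\mathrm{Im}(u)\,|p_1(u)|^2\,\mathrm{Im}\,g(u).
\]
Restricting to the component $\mathbb{H}^+$ of $\mathbb{CP}^1\setminus\mathbb{RP}^1$ on which $\mathrm{Im}(u)>0$, the label becomes $\pm\,\mathrm{sign}\big(\mathrm{Im}\,g(u)\big)$, so it remains to show that $\mathrm{Im}\,g$ keeps a constant sign on $\mathbb{H}^+$.

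This last step is where the hypothesis on the real bi-degree is indispensable, and I expect it to be the main obstacle. The map $g$ has complex degree $3$, while the assumption that the second entry of the real bi-degree equals $3$ says precisely that $g|_{\mathbb{RP}^1}$ has topological degree $\pm 3$, i.e. its absolute value equals the complex degree. By the argument principle this forces $g$ to send $\mathbb{H}^+$ into a single open half-plane: for generic $w\in\mathbb{H}^+$ the curve $g(\mathbb{RP}^1)=\mathbb{RP}^1$ winds $\pm 3$ times around $w$, so $g$ attains all three of its preimages of $w$ inside $\mathbb{H}^+$, leaving none on $\mathbb{RP}^1$ or in $\mathbb{H}^-$; hence $g^{-1}(\mathbb{RP}^1)=\mathbb{RP}^1$ and $g(\mathbb{H}^+)\subseteq\mathbb{H}^+$ (or, if the degree is $-3$, $g(\mathbb{H}^+)\subseteq\mathbb{H}^-$). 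In particular $\mathrm{Im}\,g$ never vanishes on $\mathbb{H}^+$, which also confirms a posteriori that no such secant degenerates to a ruling line, and its sign is constant there. Therefore all the lines $l$ lie in one component. It is worth emphasizing that real, as opposed to merely complex, bi-degree $(1,3)$ is essential: a curve whose second ruling map is $[s:t]\mapsto[s^3:t^3]$ has complex bi-degree $(1,3)$ but real bi-degree $(1,1)$ (the cube map is a bijection of $\mathbb{RP}^1$), its $g$ folds $\mathbb{H}^+$ across $\mathbb{RP}^1$, and a direct check shows that its imaginary secants meet both components.
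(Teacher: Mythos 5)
Your proof is correct, but it takes a genuinely different route from the paper's. The paper argues by pure continuity: given two such lines, it places the corresponding parameters $z_1,z_2$ in the same half of $\mathbb{CP}^1\setminus\mathbb{RP}^1$, joins them by a path $z_t$, and observes that each secant line $l_t$ through $k(z_t),k(\bar z_t)$ is disjoint from $\mathbb{R}Q$: if $l_t$ met $\mathbb{C}Q$ in a third point it would lie on $\mathbb{C}Q$ and be a ruling, which is impossible because real bi-degree $(1,3)$ forces every real ruling to meet the curve only in real points. The whole path of lines therefore stays in the complement of $\mathbb{R}Q$, so the two given lines lie in one component. You instead label the two components by the sign of the defining quadratic form, compute that label for the secant through $C(u),C(\bar u)$ via the polarization identity (your constant $\pm 2$ comes out as $-4$ in the normalization $\phi(x+y)=\phi(x)+\beta(x,y)+\phi(y)$, but only the sign matters), and reduce the lemma to the constancy of $\mathrm{sign}\,\mathrm{Im}\,g$ on $\mathbb{H}^+$, which you obtain from the argument principle: a real rational map of complex degree $3$ whose restriction to $\mathbb{RP}^1$ has topological degree $\pm 3$ must send $\mathbb{H}^+$ into a single open half-plane. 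Both proofs ultimately exploit the same dichotomy --- ``all preimages of real points under $g$ are real'' (the paper's form) is equivalent to ``$g$ sends non-real points to non-real points'' (your form) --- but your version buys an explicit formula for which component each line lies in, and your closing example makes transparent why complex bi-degree $(1,3)$ alone would not suffice; the paper's version is shorter and avoids all computation.

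One presentational caveat: your early justification that such a line cannot be a ruling (``a ruling would meet $\mathbb{C}C$ in one or three points rather than in exactly two'') is insufficient as stated, because a ruling from the degree-$3$ family meets the curve in three points which could a priori consist of a conjugate imaginary pair together with one real point. What actually excludes this is precisely your later observation that $\mathrm{Im}\,g$ never vanishes on $\mathbb{H}^+$, since a ruling through $C(u)$ and $C(\bar u)$ forces $g(u)\in\mathbb{RP}^1$. That observation is proved independently of the sign computation, so there is no circularity, but it should be stated up front as the real justification. You also implicitly use that each of the two sign-regions $\{\phi>0\}$ and $\{\phi<0\}$ is connected, so that the sign genuinely labels the components; this is true for a signature-$(2,2)$ quadric in $\mathbb{RP}^3$ and deserves a sentence.
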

\begin{proof}
  Consider two lines $l_1$ (resp. $l_2$), such that $\mathbb{C}l_1$ (resp. $\mathbb{C}l_2$) intersect $\mathbb{C}C$  (paramaterized by $k$) in the points $k(z_1)$ and $k(\bar{z_1})$ (resp. $k(z_1)$ and $k(\bar{z_1})$). We may assume that $z_1$ and $z_2$ lie in the same component of $\mathbb{CP}^1\setminus \mathbb{RP}^1$, because if not, we may replace $z_2$ by its conjugate. So there is a path $z_t$ of imaginary points in one half of  $\mathbb{CP}^1\setminus \mathbb{RP}^1$, connecting $z_1$ and $z_2$. The conjugate $\bar{z_t}$ of the path connects $\bar{z_1}$ and $\bar{z_2}$. If a line $l_t$, connecting $z_t$ and $\bar{z_t}$, intersects $\mathbb{C}Q$ in more than two points, it would have to lie on $\mathbb{C}Q$ and would be one of the rulings. This is impossible since by definition, $l_t$ intersects the curve in at least two imaginary points but the curve has real bi-degree~$(1,3)$. Therefore the line $\mathbb{R}l_t$ never intersects $\mathbb{R}Q$ and therefore $\mathbb{R}l_1$ and $\mathbb{R}l_2$ must lie in the same component. 
\end{proof}

\begin{lemma}
  Consider a line $l$ intersecting a non-singular real rational curve $C$ of bi-degree $(1,3)$ in a hyperboloid $Q$ in only a pair of conjugate imaginary points. It defines a pencil of planes containing $l$. Any plane from that pencil intersects $C$ in two more \emph{distinct} real points.  \label{nontangent}
\end{lemma}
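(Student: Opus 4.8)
The plan is to reduce the statement to the computation of a single homological intersection number on the torus $\mathbb{R}Q$, exactly of the kind carried out in Lemma~\ref{affinebidegree}. First I would record that $\mathbb{R}l$ is disjoint from $\mathbb{R}Q$: since $\mathbb{C}l$ meets $\mathbb{C}C\subset\mathbb{C}Q$ in the conjugate pair $p,\bar p$, and a line not contained in a quadric meets it in exactly two points, we get $\mathbb{C}l\cap\mathbb{C}Q=\{p,\bar p\}$; as both points are imaginary, $\mathbb{R}l\cap\mathbb{R}Q=\varnothing$, and in particular $\mathbb{R}l$ misses $\mathbb{R}C$. This is the same observation already used in Lemma~\ref{onecomponent}.

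Next I would fix a plane $X$ in the pencil through $l$ and set $\Gamma=X\cap Q$. Since $C\subset Q$, the intersection $\mathbb{C}C\cap\mathbb{C}X$ equals $\mathbb{C}C\cap\mathbb{C}\Gamma$, which is a set of four points counted with multiplicity; and because $l\subset X$, the two points $p,\bar p$ lie among them. The conic $\Gamma$ is a hyperplane section of $Q$, so $[\mathbb{C}\Gamma]=[L_1]+[L_2]$, and when $\mathbb{R}\Gamma$ is smooth its real part is a curve meeting each real ruling once, hence of real bi-degree $(\pm1,\pm1)$. (The restriction of the quadratic form to the three-space $X$ is never definite, so $\mathbb{R}\Gamma$ is always non-empty, and it is smooth unless $X$ is tangent to $Q$.)

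Following the computation in Lemma~\ref{affinebidegree}, the intersection number $\big([L_1]+3[L_2]\big)\circ\big(\pm[L_1]\pm[L_2]\big)$ equals either $\pm2$ or $\pm4$. But $p,\bar p$ are imaginary, so at most two of the four points of $\mathbb{C}C\cap\mathbb{C}\Gamma$ can be real; hence the number of real intersection points is at most two, which rules out the value $\pm4$ and forces the intersection number to be $\pm2$ and at least two real points to exist. Since these real points are distinct from $p,\bar p$ and the total multiplicity is four, it follows that $p,\bar p$ each occur with multiplicity one and that the two remaining points $q_1,q_2$ are real. If $q_1=q_2=q$, then $\mathbb{R}C$ and $\mathbb{R}\Gamma$ meet only at the single real point $q$, where $\mathbb{C}C$ and $\mathbb{C}\Gamma$ have contact of order two; two smooth real branches with even-order contact touch without crossing, so the local index at $q$ vanishes and $[\mathbb{R}C]\circ[\mathbb{R}\Gamma]$ computes to $0$, contradicting the value $\pm2$. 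Hence $q_1\neq q_2$ and the intersection is transverse.

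The step I expect to be the main obstacle is the degenerate case in which $X$ is tangent to $Q$, so that $\Gamma$ splits into two real rulings $m_1\cup m_2$ meeting at the tangency point $r$ and the smoothness used above fails. Here I would argue directly: $C$ meets the ruling of one family in three points and the ruling of the other family in one point, the pair $p,\bar p$ must lie on the trisecant ruling (the other meets $C$ only once), and the remaining real point on each ruling supplies the two points $q_1\in m_1$, $q_2\in m_2$, which are real and distinct \emph{provided} $C$ does not pass through the node $r$. Establishing that no plane of the pencil is tangent to $Q$ at a point of $C$ — equivalently that $l\not\subset T_rQ$ for any $r\in C$ — is the delicate point, and is where the hypothesis on $l$ (that it meets $C$ only in the imaginary pair $p,\bar p$) must be invoked once more.
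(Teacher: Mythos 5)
Your approach is genuinely different from the paper's. The paper never computes an intersection number plane by plane: it invokes Lemma~\ref{affinebidegree} only through its conclusion (a bi-degree $(1,3)$ curve is not affine) to see that every plane of the pencil meets $\mathbb{R}C$, hence meets it in exactly two real points counted with multiplicity, the other two intersection points being the fixed pair $p,\bar p$; distinctness is then obtained by a perturbation argument \emph{inside the pencil}: if some $X_0$ met $C$ at a real point with multiplicity two, the nearby planes $X_t$ would either have two real points near it on both sides of $t=0$ (forcing two crossing local branches of $\mathbb{R}C$, impossible for a non-singular knot) or a conjugate imaginary pair on one side (making $\mathbb{C}X_t\cap\mathbb{C}C$ purely imaginary, contradicting non-affineness). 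Your per-plane homological computation, together with the standard facts that the local real index of two smooth branches is bounded by the complex multiplicity and vanishes for even-order contact, reaches the same conclusion for every plane $X$ whose section $\Gamma=X\cap Q$ is a smooth conic, and is a legitimate alternative; note that both proofs ultimately rest on the same computation, but the paper uses only its corollary ``not affine'' while you re-use the value $\pm 2$ itself.

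The gap is the case you flag yourself: planes of the pencil tangent to $Q$, where $\Gamma$ degenerates to a pair of rulings and a node of $\Gamma$ lying on $C$ could a priori carry local index $\pm2$, defeating the index argument. You leave this open, but it closes in one line from facts you have already established --- indeed the case is vacuous. A plane tangent to $Q$ at a real point $r$ meets $\mathbb{R}Q$ in two real lines through $r$; since any two lines in a real projective plane intersect, \emph{every} real line contained in such a plane meets $\mathbb{R}Q$, whereas you showed at the outset that $\mathbb{R}l\cap\mathbb{R}Q=\varnothing$. Alternatively, your own observation that $p,\bar p$ would have to lie on the trisecant ruling $m_2$ finishes it: two distinct points determine a unique line, so $\mathbb{C}l=\mathbb{C}m_2$, making $l$ a ruling of $Q$ --- it would then lie on $\mathbb{R}Q$ and meet $\mathbb{C}C$ in three points, contradicting the hypothesis twice over. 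With either remark inserted, no plane of the pencil is tangent to $Q$, your smooth-conic argument covers every plane, and the proof is complete.
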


\begin{proof}
  The pencil may be parametrized by $\mathbb{RP}^1$ and each plane may be denoted by $X_t$, for some $t\in \mathbb{RP}^1$. 
 
  Since the knot is not affine (lemma~\ref{affinebidegree}), $\mathbb{R}X_t$ intsersects $\mathbb{R}C$. $X_t$ already intersects the $C$ in two imaginary points, so there are exactly two real points of intersection. We will now show that these points have to be distinct. 
 
 If a plane, say $X_0$, intersects $C$ in a real point with multiplicity two then for a small $\epsilon$, the intersections $\mathbb{C}X_t\cap \mathbb{C}C$ for all $t\in (-\epsilon,\epsilon)$ cannot be real because then the intersection of the planes with the curve would be tracing out local branches of the curve that are intersecting. We know that this curve does not have crossings. But then, since the knot is affine, $\mathbb{C}X_t\cap \mathbb{C}C$ cannot be purely imaginary for any $t$.
\end{proof}

\begin{lemma}
  Consider a homotopy of rational maps $q_t:\mathbb{RP}^1\to\mathbb{RP}^1$ of degree $3$. If there is a point $w_0$ such that $q_0(w_0)=z_0$ such that $\#(q_0^{-1}(z_0))=1$, then for each $t$ there is a point $w_t$ such that $q_t(w_t)=z_t$ such that $\#(q_t^{-1}(z_t))=1$.
  \label{interval}
\end{lemma}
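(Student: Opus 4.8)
The plan is to run the entire argument through the \emph{topological (mapping) degree} of $q_t$, viewed as a self-map of the circle $\mathbb{RP}^1$. Since this degree is a homotopy invariant, the whole problem reduces to two facts: that the hypothesis at $t=0$ pins the degree down to $\pm 1$, and that any degree-$3$ real rational map whose mapping degree is $\pm 1$ must possess a value with a single real preimage. The key preliminary observation, valid for every degree-$3$ real rational map $q$, is that a regular value $z$ has a number $r(z)$ of real preimages which is odd and at most $3$: over $\mathbb{C}$ there are exactly three preimages, and the non-real ones occur in conjugate pairs, so $r(z)\in\{1,3\}$.

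First I would extract the degree from the hypothesis. Suppose $\#q_0^{-1}(z_0)=1$, and let $m$ be the local degree (ramification order) of the unique real preimage $w_0$. The remaining $3-m$ preimages over $\mathbb{C}$ are all non-real, hence split into conjugate pairs, so $3-m$ is even and $m\in\{1,3\}$. If $m=1$, then $z_0$ is a regular value with $r(z_0)=1$, and the mapping degree is the signed preimage count $d=\mathrm{sgn}(q_0'(w_0))=\pm 1$. If $m=3$, then $w_0$ is totally ramified; because $(w-w_0)^3$ is a \emph{homeomorphism} of $\mathbb{RP}^1$ near $w_0$, the nearby regular values also have exactly one real preimage, so again $d=\pm 1$. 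Homotopy invariance of the degree then gives $\deg q_t=\pm 1$ for every $t$.

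The substance of the lemma is the converse, which I regard as the main obstacle: any degree-$3$ real rational map $q$ with mapping degree $\pm 1$ has a regular value with exactly one real preimage. I would argue by contradiction, assuming $r\equiv 3$ on the complement of the finite set of critical values, where $r$ is locally constant. If some real critical point has \emph{even} local degree, then it is a fold, and crossing the corresponding critical value changes $r$ by $\pm 2$; since $r$ cannot reach $5$, it must drop to $1$ on one side, contradicting $r\equiv 3$. Hence every real critical point has \emph{odd} local degree, which means $q$ is a local homeomorphism of $\mathbb{RP}^1$ at every real point. A surjective local homeomorphism of the compact circle is a topological covering, here with $r=3$ sheets, so its mapping degree is $\pm 3$ --- contradicting $\deg q=\pm 1$. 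Therefore $r$ takes the value $1$ somewhere, supplying the desired $z_t$ and its unique preimage $w_t$.

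The delicate point in the final step, and the reason a naive argument fails, is that a totally ramified point (local degree $3$) is a genuine branch point in the complex/smooth picture yet is topologically invisible on $\mathbb{RP}^1$, since an odd power is a homeomorphism of the line; this is exactly what lets the map be a topological covering while still being ramified. Thus the covering-map conclusion must be phrased for the underlying continuous map $S^1\to S^1$ rather than for the branched complex cover, and the parity bookkeeping ($r$ odd, $r\le 3$) is what prevents $r$ from escaping the set $\{1,3\}$. Everything else is routine bookkeeping about signed preimage counts and ramification orders.
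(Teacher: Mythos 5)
Your argument is correct for the lemma as literally stated, but it takes a genuinely different route from the paper's. The paper tracks a witness continuously: it considers the maximal interval around $w_0$ on which $q_0$ is injective and argues this interval cannot die along the homotopy (its endpoints merging would produce a value with four preimages, impossible in degree $3$), so that a good value $z_t$ can be chosen continuously in $t$. You instead encode the hypothesis in the topological mapping degree ($\#(q_0^{-1}(z_0))=1$ forces $\deg q_0=\pm 1$), propagate it by homotopy invariance, and prove the converse dichotomy: if every regular value had three real preimages, any real fold would make the count drop to $1$ across its critical value, so the map would be a local homeomorphism of the circle, hence a $3$-sheeted covering of degree $\pm 3$. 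This is cleaner and more conceptual than the paper's interval-tracking, and it makes the homotopy invariance trivial. Two caveats, though. First, your claim that crossing a fold's critical value changes $r$ by exactly $\pm 2$ silently uses that a critical value of a degree-$3$ map has a \emph{unique} critical preimage (multiplicities in a fiber sum to $3$); otherwise contributions of several critical points could cancel. This is a one-line fix, but it is special to low degree and should be stated. Second, and more relevant to how the lemma is used: the paper's proof is designed to yield a \emph{continuous} choice $t\mapsto z_t$, which is exactly what the application requires (extending $[\alpha_{t_0}:\beta_{t_0}]$ along the isotopy in the bidegree-$(1,1)$ case). Your argument gives existence for each $t$ separately; the set of pairs $(t,z)$ with $\#(q_t^{-1}(z))=1$ is open with non-empty slices, but an open subset of $[0,1]\times\mathbb{RP}^1$ with non-empty slices need not admit a continuous section (the slices can switch connected components), so upgrading your conclusion to the continuous selection the paper actually needs would require a further argument, e.g. tracking the arcs of values with a single preimage via the continuously moving real critical values.
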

\begin{proof}
  Consider the largest interval $I_0$ containing $w_0$ such that $q_0\restriction_{I_t}$ is injective. This interval must remain non-empty throughout the isotopy because for the smallest $t_0$ for which $I_{t_0}=\emptyset$, the end-points of the interval would merge to give point $z_{t_0}$ such that $\#(q_{t_0}^{-1}(q_{t_0}(z_{t_0})))=4$ which is impossible for a rational map of degree $3$. So at each stage we may continuously choose a point $z_t$ from $I_t$.
\end{proof}

\begin{lemma}
  Consider two pairs $(C_0,X_0)$ and $(C_1,X_1)$  in $\mathcal{P}_{4,2}$. If $C_0$ and $C_1$ are rigidly isotopic, then there is there is a path in $\mathcal{P}_{4,2}$ joining $(C_0,X_0)$ and $(C_1,X_1)$.
\end{lemma}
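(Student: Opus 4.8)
The plan is to present $\mathcal{P}_{4,2}$, over the subspace of bi-degree $(1,3)$ knots lying on hyperboloids, as a bundle with path-connected fibers, and then to lift the given rigid isotopy of knots to a path of pairs, closing the gap at the end inside a single fiber. First I would normalize the isotopy: fix a rigid isotopy $C_t$, $t\in[0,1]$, from $C_0$ to $C_1$. After a small perturbation of this path (the preceding perturbation lemma shows that curves lying on a cone form a set of codimension one) I may assume every $C_t$ lies on a hyperboloid $Q_t$; and by lemma~\ref{class4} together with the reflection and orientation-reversal lemmas, which are themselves rigid isotopies, I may further take the complex bi-degree to be $(1,3)$ throughout, since it is a homotopy invariant and hence constant along the path. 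For such curves lemmas~\ref{affinebidegree}, \ref{onecomponent} and \ref{nontangent} are available.

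Next I would describe the fiber over a fixed $C$ and show it is path-connected. A pair $(C,X)\in\mathcal{P}_{4,2}$ determines its conjugate imaginary intersection pair $k(z),k(\bar z)$, which spans a real secant line $l_z$ meeting $\mathbb{C}C$ in exactly that pair, and $X$ is then a plane of the pencil through $l_z$; conversely, by lemma~\ref{nontangent}, every plane of that pencil meets $C$ in two further distinct real points and so lies in $\mathcal{P}_{4,2}$. Thus, away from the lower-dimensional degenerate locus where $l_z$ is a ruling of $Q$, the fiber is a bundle over the space of admissible secants with fiber the pencil $\cong\mathbb{RP}^1$. Lemma~\ref{onecomponent} confines all admissible secants to a single component of $\mathbb{RP}^3\setminus\mathbb{R}Q$, and its proof---sliding the imaginary parameter $z$ within one half of $\mathbb{CP}^1\setminus\mathbb{RP}^1$---in fact shows this space of secants is path-connected; together with connectedness of the pencil this makes the fiber over $C$ path-connected.

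With these two ingredients I would finish the argument as follows. I lift $C_t$ to a path $(C_t,X_t)$ by continuously tracking the imaginary parameter $z_t$, hence the admissible secant $l_{z_t}$, and a plane in its pencil, starting from $(C_0,X_0)$ and ending at some $(C_1,X_1')$. Since both $X_1'$ and $X_1$ lie in the path-connected fiber over $C_1$ established above, I then join $(C_1,X_1')$ to $(C_1,X_1)$ inside that fiber and concatenate the two paths.

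The hard part will be guaranteeing that the real intersection number stays exactly $2$ along the lift, and this is where lemma~\ref{interval} enters. The count can fail to be preserved in two ways: two real intersection points could collide and move off into a conjugate imaginary pair, or the secant $l_{z_t}$ could degenerate into a ruling of $Q_t$ and thereby pick up an extra real intersection point. The first possibility is controlled by lemma~\ref{affinebidegree} (the curve is never affine, so a real intersection point cannot simply disappear) together with lemma~\ref{interval} applied to the degree-$3$ map $q_t=\pi_2\circ\tilde f_t$ underlying $C_t$, which lets me continuously track a value with a simple fibre and so keep a real intersection point alive and away from collision. The second possibility is controlled by lemma~\ref{onecomponent}, which keeps the admissible secant inside the single ruling-free component throughout. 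Once both of these monotonicity-type controls are in place, the bundle of the second paragraph is locally trivial over the path and the lift becomes routine.
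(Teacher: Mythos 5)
There is a genuine gap, and it is the central one: you conflate \emph{complex} bi-degree with \emph{real} bi-degree. By lemma~\ref{bidegree22}, every degree~$4$ rational curve on a quadric automatically has complex bi-degree $(1,3)$ (up to swapping the factors), so your normalization ``complex bi-degree $(1,3)$ throughout'' carries no information. What actually stratifies the problem is the real bi-degree, which for such curves is either $(1,3)$ or $(1,1)$ up to sign and swap; these are exactly the two distinct rigid isotopy classes of lemma~\ref{class4}, and the reflection and orientation-reversal lemmas you cite only realize $(m,n)\mapsto(n,m)$ and $(m,n)\mapsto(-m,n)$, so they can never turn real bi-degree $(1,1)$ into $(1,3)$. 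The three lemmas your argument rests on --- \ref{affinebidegree}, \ref{onecomponent} and \ref{nontangent} --- are stated and proved only for real bi-degree $(1,3)$, and all three fail for real bi-degree $(1,1)$: such a curve represents $[L_1]+[L_2]$ in $\mathrm{H}_1(\mathbb{R}Q)$ and so can be affine; its imaginary secant $l_t$ can degenerate into a ruling of $Q$ (picking up a real intersection point); and a plane through such a secant need not meet the curve in two further real points. Pairs $(C,X)\in\mathcal{P}_{4,2}$ with $C$ of real bi-degree $(1,1)$ do occur (for instance planes tangent to $Q$ at a point off the curve meet it in exactly two real points), so this case cannot be dismissed. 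It is precisely the case on which the paper spends most of its proof: there one must continuously choose the real point $\lambda_t$ via a pencil-and-tangency argument, use lemma~\ref{interval} to get past the instants when $l_t$ lies on $Q$, and finish by normalizing both pairs to the standard $(1,1)$ curve and moving one tangent plane to the other through the connected complement of the curve in the hyperboloid. None of this is present in your proposal, so the unknot class (writhe $\pm 1$) is simply not handled.

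Two secondary points. First, your invocation of lemma~\ref{interval} is misplaced: in the real bi-degree $(1,3)$ case no such tracking is needed, since lemma~\ref{nontangent} already guarantees that \emph{every} plane through the secant meets the curve in two distinct real points; the paper uses lemma~\ref{interval} only in the $(1,1)$ case, to follow a ruling meeting the curve in one real point and a conjugate pair. Second, the normalization ``every $C_t$ lies on a hyperboloid'' cannot be achieved by a small perturbation of the path: curves lying on cones form a codimension-\emph{one} subset, so a generic path crosses that wall at finitely many points rather than avoiding it; avoidance would require codimension at least two. Where your argument does apply --- the real bi-degree $(1,3)$ class --- it essentially reproduces the paper's: fix the parameter $z$, track the secant $l_t$, use lemma~\ref{onecomponent} to connect secants and lemma~\ref{nontangent} to control the real intersection; but as it stands the proposal proves the lemma for only one of the two rigid isotopy classes.
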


\begin{proof}
  We will solve these individually for the two types of real bi-degrees. Denote the path of rational knots connecting $C_0$ and $C_1$ by $C_t$, where $0\leq t\leq 1$. Let $k_t$ denote a parametrization of $C_t$, for each $t$.  Denote the intersection  by $\mathbb{C}X_0\cap\mathbb{C}C_0=\{k_0(z), k_0(\bar{z}), k_0(\lambda_1), k_0(\lambda_2)\}$, for some $z$ which is imaginary and some $\lambda_i$ which are real. Keep $z$ fixed throughout the isotopy and define the line $l_t$  to be the line defined by the two imaginary points $k_t(z)$ and $k_t(\bar{z})$. The points $k_t(z)$ and $k_t(\bar{z})$ must be distinct, otherwise it would give rise to a solitary double point. Now consider the two bi-degree cases separately:

  \textit{Real bi-degree $(1,3)$:} For each $t$, $l_t$ does not intersect $Q$ in any more points, otherwise it would be forced to lie on $Q$ and would have to be one of the rulings that intersect the curve in one real and two imaginary points. That is impossible because this curve is of \textit{real} bi-degree $(1,3)$. Therefore, by lemma~\ref{nontangent}, for each $t$, the unique plane $X_t$ defined by the line $l_t$ and $k_t(\lambda_1)$ must intersect the curve in two distinct real points. 

  Using lemma~\ref{onecomponent}, one can find a path of lines $l_t$ between the line $l_0$ and one which is defined by the conjugate pair in the intersection $\mathbb{C}X_1\cap\mathbb{C}C_1$. Again by lemma~\ref{nontangent}, the plane $X_t$ defined by each line $l_t$ and the fixed real point $k_t(\lambda_1)$ will intersect it in distinct points. Finally move $\lambda_1$ to a point which is in the preimage of the real point of intersection $\mathbb{R}X_1\cap\mathbb{R}C_1$. Once again lemma~\ref{nontangent} ensures that the curve and the plane will intersect in \textit{distinct} real points. 

  \textit{Real bi-degree $(1,1)$:}  
  Each plane $X_t$ containing $l_t$ will intersect the curve in the two real points $k_t(\lambda_t)$ and $k_t(\lambda_t')$. Since the curve is of degree~$4$, the point $\lambda_t$ can always be continuously chosen so the plane $X_t$ defined by it and the line $l_t$ intersects the curve in \textit{distinct} points. This can be seen as follows.

  Parametrize the pencil of planes containing a line $l$ by $\mathbb{RP}^1$. For each $s\in\mathbb{RP}^1$, denote the plane by $X_s$. If there is a plane $X_{s_0}$ that intersects the knot in a one double point, then for every $s$ small interval around $s_0$, it is impossible that $X_s$ intersects $k$ in only real points because the two real points of intersection will trace out two branches of the curve that intersect at the point where they coincide. This means that the double point must arise because $X_{s_0}$ is tangent to the knot. Supposing $X_{s_1}$ is another plane tangential to the knot, then let $I$ denote the interval component of $\mathbb{RP}^1\setminus\{s_0,s_1\}$ for which $X_s$ intersects the knot in real points for all $s\in I$. In that case, for each $s\in I$, $X_s$ is intersecting the curve in distinct real points that coincide at the for $X_{s_0}$ and $X_{s_1}$. The two real intersection points trace out a closed loop. Since the real part of a rational curve is connected, this loop must be the entire real curve.Since the rational curve is infinite, this interval must also be infinite, and therefore throughout the isotopy of lines, one may continuously choose a plane that intersects the knot in \textit{distinct} real points. 

  Consider the smallest $t_0$ for which the line $l_{t_0}$ lies on $Q$. $X_{t_0}$ will then be tangent to $Q$ and would intersect $Q$ in a pair of lines, one of which intersects the curve in one real point, and another which intersects the curve in a real point and a conjugate pair.
  
  The pullback of the line $l_{t_0}$ by the Segre embedding may be explicitly given by the set $([x_0:x_1], [\alpha_{t_0}:\beta_{t_0}])$ where $[\alpha_{t_0}:\beta_{t_0}]$ is fixed. Its points of intersections with the curve are given by those points $[p_0(s_i,t_i):p_1(s_i,t_i)]$ ($i=1,\ldots, 3$) such that $[p_2(s_i,t_i):p_3(s_i,t_i)]=[\alpha_{t_0}:\beta_{t_0}]$. Since the line intersects in two imaginary points and one real point, two of the $[s_i:t_i]$ are conjugate imaginary. By lemma~\ref{interval}, one can extend $[\alpha_{t_0},\beta_{t_0}]$ throughout the isotopy to $[\alpha_t,\beta_t]$ so that the line $l_t$ defined by $([x_0:x_1], [\alpha_{t}:\beta_{t}])$ intersects the curve in an imaginary pair and one real point. The line from the other family of rulings that passes through $[\alpha_t,\beta_t]$ intersects the curve in only one point, so together the two lines, one from each ruling, define a plane $X_t$ which intersects the curve in two real and two imaginary points. 
 
  By lemma~\ref{class4}, we know that $k_0$  is isotopic to the curve 
\[[s(s^3+t^3):s(s^3-t^3):t(s^3+t^3):t(s^3-t^3)]\] (or its mirror image)
  and by what we have just proved, the plane $X_0$ can be isotoped to a plane $X_0$ which intersects the curve $[s(s^3+t^3):s(s^3-t^3):t(s^3+t^3):t(s^3-t^3)]$ in two distinct real points throughout the isotopy. 

  By the same reasoning, $k_1$ can also be isotoped to the curve $[s(s^3+t^3):s(s^3-t^3):t(s^3+t^3):t(s^3-t^3)]$ and the plane $X_1$ to $X_1$ so that in intersects the curve it two distinct real points throughout the isotopy.  Observe that $X_0$ and $X_1$ are tangential to the ambient hyperboloid, but there the curve $[s(s^3+t^3):s(s^3-t^3):t(s^3+t^3):t(s^3-t^3)]$ intersects every line of its ruling in just one real point.  So every plane tangent to the hyperboloid intersects the curve in two real points, which are distinct as long as the point of tangency is not the curve. But the the complement of the curve in the hyperboloid is conencted and so $X_0$ may be moved to $X_1$ may be moved to each other via planes that intersect the curve in two distinct real points. 
\end{proof}

A degree four knot in $\mathbb{RP}^3$ which intersects the plane at infinity in two real points is either the unknot or the two crossing knot. Therefore, by the previous theorem, 

\begin{theorem}Singular knots in $S^3$ with one double point form a wall between two unknots or a wall between the unknot and the trefoil. \end{theorem}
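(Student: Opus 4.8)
The plan is to convert the classification of the base pairs into a statement about the two non-singular chambers flanking each wall. First I would apply Theorem~\ref{maincorrespondence}: stereographic projection from the double point realizes a singular degree~$6$ knot in $S^3$ with one double point as a pair $(C,X)\in\mathcal{P}'_{4,k}$, with $C$ a degree~$4$ knot in $\mathbb{RP}^3$ and $k=2$ when the node is real. Feeding in the displayed classification just above (such a $C$ is either the unknot or the two-crossing knot) together with the previous theorem (pairs with rigidly isotopic $C$ are joined by a path in $\mathcal{P}_{4,2}$, which retracts onto $\mathcal{P}'_{4,2}$ by Theorem~\ref{retract}) and the fact from Theorem~\ref{maincorrespondence} that the two lifts of a pair are mirror images of each other, I would conclude that, up to mirror image, there are exactly two rigid isotopy classes of these singular knots: one lying over the unknot and one lying over the two-crossing knot.

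Next I would study each wall locally. A curve carrying a single node is a codimension-one stratum in the space of degree~$6$ real rational curves, and its two sides are the two non-singular chambers produced by the two resolutions of the node. For each of the two wall types I would fix a concrete algebraic representative (obtained by running the stereographic correspondence on the two classes of $C$) and write down a one-parameter family $p_\varepsilon$ crossing the wall transversally at $\varepsilon=0$; the knots at $\varepsilon>0$ and $\varepsilon<0$ are the two neighbors, whose smooth isotopy type I would then read off from the resulting diagram.

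I would then identify the neighbors. Since the two resolutions of a real node differ by a single crossing change, the two flanking chambers have encomplexed writhe numbers differing by $2$. Over the unknot (writhe $\pm1$ in $\mathbb{RP}^3$) the two flanking degree-$6$ writhes should be $\{0,2\}$ or $\{-2,0\}$, both presenting the unknot; over the two-crossing knot (writhe $\pm3$) they should be $\{2,4\}$ or $\{-4,-2\}$, with the $\pm2$ side the unknot and the $\pm4$ side the trefoil. To turn ``should be'' into ``is'', I would perturb the explicit representative of each wall across $\varepsilon=0$ and read off the two knot types directly. This yields exactly the two stated alternatives: a wall between two unknots in the first case and a wall between the unknot and the trefoil in the second.

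The step I expect to be the main obstacle is this last identification, specifically certifying that one resolution of the two-crossing wall is genuinely the trefoil rather than an over-crossed diagram of the unknot; the writhe bookkeeping only narrows the possibilities, so I would pin it down by inspecting the perturbed model's diagram and confirming the trefoil by a topological invariant. Finally, the solitary-node case ($k=0$) would be treated by the same scheme, with the two resolutions either deleting the isolated real point or opening it into a small real oval, and I would check that it contributes no knot type beyond those already found.
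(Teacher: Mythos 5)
Your proposal follows essentially the same route as the paper: Theorem~\ref{maincorrespondence} plus the lemma connecting pairs in $\mathcal{P}_{4,2}$ (with Theorem~\ref{retract}) reduces the walls to Bj{\"o}rklund's two classes of degree~$4$ knots in $\mathbb{RP}^3$, and the knot types flanking each wall are identified by perturbing explicit representatives. The paper's own proof of this statement is just that one-line reduction; the identification of the flanking types (writhes $0,\pm2$ around the unknot wall, $\pm 2,\pm 4$ with the trefoil around the two-crossing wall) is exactly the content it develops later in the construction and component-counting section, so your writhe bookkeeping plus explicit perturbation is consistent with, and fills in, what the paper defers.

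One genuine slip: your treatment of the solitary-node case. A non-singular real rational knot in $S^3$ has connected real locus (it is the image of $\mathbb{RP}^1$), so a solitary double point cannot ``open into a small real oval'' --- that picture is borrowed from plane curves and does not exist here. On both sides of a solitary wall the isolated real point simply disappears, the two sides carry the \emph{same} knot type, and they are distinguished only by the encomplexed writhe; so your proposed dichotomy of resolutions would not get off the ground, and you would still owe an argument that this common knot type is the unknot (which does follow, since by Lemma~\ref{affinebidegree} the underlying degree~$4$ curve must have real bi-degree $(1,1)$ and hence affine unknotted real part). The paper instead disposes of this case by showing the solitary wall is the same wall as a real-node wall: moving the plane until it is tangent to the degree~$4$ curve makes the lift acquire a cusp, after which the plane meets the curve in two real points, so the solitary stratum connects to the real-node stratum and contributes no new wall.
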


\subsection{Degree~$6$ knots in $S^3$ with a solitary double point} 
Given a knot, if there is a plane disjoint from it, then it intersects the complexification of the knot in two pairs of conjugate imaginary points. Each pair defines a real line. Pick one of these real lines. It does not intersect the real part of the curve and indeed it will be shown that that such a line can always be chosen along an isotopy. We will need the following lemma. 

\begin{lemma}
  Consider a homotopy of rational maps $f_t:\mathbb{CP}^1\to\mathbb{CP}^1$, each of degree~$3$. For a fixed open set $U\subset\mathbb{CP}^1$, consider an $z_0\in f_t^{-1}(U)$. It is possible to continuously extend $z_0$ throughout the isotopy, so that for each $t$, $x_t\in f_t^{-1}(U)$
\end{lemma}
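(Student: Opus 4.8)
The plan is to prove a slightly stronger statement that is cleaner to control: instead of merely keeping $x_t$ somewhere in the open set $f_t^{-1}(U)$, I would track a single fixed value. Set $u_0=f_0(z_0)\in U$ and look for a continuous path $x_t$ with $x_0=z_0$ and $f_t(x_t)=u_0$ for every $t$; since $u_0\in U$, such a path automatically gives $x_t\in f_t^{-1}(U)$. Chasing the single value $u_0$ rather than the whole open set has two advantages: it removes any interaction with the boundary $\partial U$ (where a limit of points staying only inside $U$ could escape), and it forces the relevant fibres to be finite, which is exactly what will make a continuous continuation possible and rule out oscillation.

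Writing $f_t=[P_t(s,r):Q_t(s,r)]$ with $P_t,Q_t$ homogeneous of degree $3$ and no common factor, and $u_0=[a:b]$, the condition $f_t(x)=u_0$ becomes the single homogeneous equation $b\,P_t(s,r)-a\,Q_t(s,r)=0$ in $[s:r]\in\mathbb{CP}^1$. Call the left side $g_t$. It is never identically zero, since otherwise $f_t$ would be the constant map $u_0$, contradicting $\deg f_t=3$; hence for each $t$ it has exactly three roots in $\mathbb{CP}^1$ counted with multiplicity, and its coefficients depend continuously on $t$. The problem thus reduces to the classical continuity of roots: I want a continuous selection of a root of $g_t$ starting from the prescribed root $z_0$ of $g_0$. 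This is the complex analogue of the tracking carried out in the real case in Lemma~\ref{interval}.

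To produce the selection I would run a connectedness argument on the parameter interval. Let $S\subset[0,1]$ be the set of $T$ for which a continuous lift $x\colon[0,T]\to\mathbb{CP}^1$ with $x(0)=z_0$ and $g_s(x(s))=0$ exists. Then $0\in S$. The set $S$ is closed: along a lift the points $x(s)$ lie in the finite, hence discrete, root sets of $g_s$, and as $s\to T$ these root sets converge to the finite root set of $g_T$, whose points are separated; since $x$ is continuous its image over a tail $[s_0,T)$ is connected and therefore trapped in one small ball around a single root $w$ of $g_T$, forcing $x(s)\to w$, so $x$ extends continuously with $x(T)=w$ and $T\in S$. The remaining, and main, obstacle is openness of $S$: the lift must always be pushable slightly past a time $t_*$, even when $x(t_*)$ is a multiple root of $g_{t_*}$ (equivalently a critical point of $f_{t_*}$ with critical value $u_0$), where roots collide and one cannot follow ``the same'' root in a naive way. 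Here I would invoke the argument principle: on a small disc $D$ about $x(t_*)$ containing no other root of $g_{t_*}$, the winding number of $g_{t_*}$ along $\partial D$ is a positive integer $m$, and since $g_t\to g_{t_*}$ uniformly on $\overline{D}$ the same count persists for all nearby $t$; thus $g_t$ keeps $m\ge 1$ roots inside $D$, all clustering at $x(t_*)$ as $t\to t_*$, so a continuous continuation through the collision exists.

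I expect the collision step just described to be where the care is needed; everything else is soft topology. Note that the value $3$ is used only for the finiteness of the fibres: the argument works verbatim for any positive degree, the essential point being that $f_t$ is non-constant, so that $g_t\not\equiv 0$ and the root set of $g_t$ is a nonempty finite set varying continuously in $t$. Combining nonemptiness, closedness, and openness of $S$ yields $S=[0,1]$, and the resulting path $x_t$ is the desired continuous extension of $z_0$ with $f_t(x_t)=u_0\in U$, hence $x_t\in f_t^{-1}(U)$ for all $t$.
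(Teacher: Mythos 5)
Your opening reduction is exactly the paper's first move: fix the value $u_0=f_0(z_0)$ and track a point of the fibre $f_t^{-1}(u_0)$, i.e.\ a root of the degree-$3$ form $g_t=b\,P_t-a\,Q_t$. Your closedness argument (a connected tail of the lift is trapped near a single root of $g_T$, hence converges) is correct. The gap is in the openness step, the very step you flag as the crux. From ``$g_t$ has $m\ge 1$ roots in $D$ for $t$ near $t_*$, all clustering at $x(t_*)$'' you conclude that ``a continuous continuation through the collision exists.'' That does not follow: clustering guarantees that \emph{any} choice of a root in $D$ is continuous \emph{at} $t_*$, but you need a selection continuous on a whole interval $(t_*,t_*+\delta)$, and there the roots inside $D$ may collide and separate again, at times that can accumulate at $t_*$ (nothing makes the collision times isolated; $g_t$ can have a multiple root for $t$ in a Cantor set, or for every $t$). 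So the continuation problem just past $t_*$ is the problem you started with, one scale down; the argument is circular exactly where the work lies. When $m=1$, Rouch\'e does finish it (the unique root in $D$ is automatically continuous); for $m\ge 2$ what you are invoking is the classical theorem that the roots of a \emph{one-parameter} continuous family of polynomials admit a continuous parametrization --- true, but genuinely nontrivial (it fails for two-parameter families, e.g.\ $z^2-c$ around $c=0$, so no purely soft argument can give it). A smaller wrinkle of the same kind: your set $S$ is an interval, and closedness at $T^*=\sup S$ needs a single lift on $[0,T^*)$, while you only know each $[0,T_n]$ carries \emph{some} lift; these need not cohere (Zorn's lemma on partial lifts plus your tail argument repairs this).

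The paper sidesteps precisely this difficulty by perturbing the path $t\mapsto f_t$ so that it meets the codimension-one discriminant $\{f:\#f^{-1}(u_0)<3\}$ at only finitely many times. Between consecutive crossings the three preimages are distinct, so they form three disjoint continuous branches (the incidence set is a $3$-sheeted covering of an interval, hence trivial); at an isolated crossing your own tail argument produces the left limit, and since the number of right-hand branches converging to a given root equals its multiplicity $\ge 1$, at least one branch continues from it, and finitely many patches finish the proof. Note that the perturbed tracking still proves the lemma for the \emph{original} homotopy, and this is where the openness of $U$ is genuinely used: if the perturbation is uniformly small, $f_t(x_t)$ stays close to $u_0\in U$ and hence in $U$. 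Your exact fixed-value formulation discards this slack, and tracking $u_0$ exactly along the unperturbed path is precisely the hard classical statement. So either import the paper's perturbation step --- after which your skeleton is a complete, and in fact more carefully argued, version of the paper's proof --- or keep the perturbation-free route but prove or cite the continuous-root-selection theorem (e.g.\ Kato, \emph{Perturbation Theory for Linear Operators}) instead of asserting it.
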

\begin{proof}
  Fix $w=f(z_0)$. The set of all rational functions $f$, such that $\#(f^{-1}(w))<3$ is a hypersurface $\mathcal{D}$ of co-dimension 1 in the space of rational maps of degree~3. The homotopy is a path in the space of  rational maps of degree~3, so we may perturb it to a path $t\to f_t$ which intersects $\mathcal{D}$ in finitely many points.  Let the points for which it intersects $\mathcal{D}$ be $t_1, t_2,\ldots, t_n$. For each $0\leq t\leq t_1$, we can always continuously chose a point from the three distinct points in $f_t^{-1}(w)$. We can then continue it for the next interval $t_1\leq t\leq t_2$ etc. 
\end{proof}

\begin{lemma} Consider a pair $(C,l)$ of a degree~$4$ knot and a line, such that $\mathbb{C}l$ intersects $\mathbb{C}C$ in a conjugate pair of imaginary points but is disjoint from the real point of the knot. Suppose there is an isotopy $C_t$ of the knot, where $C_0=C$, then this isotopy can be extended to a pair $(C_t,l_t)$, where for each $t$, $\mathbb{C}l_t$ intersects $\mathbb{C}C_t$ in a pair of conjugate imaginary points but no real points.  \end{lemma}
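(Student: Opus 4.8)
The plan is to transport the conjugate pair of intersection points through the isotopy while using the geometry of the ambient quadric to guarantee that the line they span never acquires a real intersection with the curve. First I would reduce to the situation where $C_t$ lies on a hyperboloid $Q_t$ for every $t$. By the lemma on perturbing a quartic off a cone, a generic degree-$4$ curve lies on a hyperboloid, and the curves lying on a degenerate quadric form a locus of positive codimension; so after an arbitrarily small perturbation of the given isotopy (rel endpoints, which we may assume already lie on hyperboloids) we may take $Q_t$ to be a continuously varying family of non-degenerate hyperboloids, each carrying $C_t$. Applying lemma~\ref{bidegree} fibrewise gives a continuous family of lifts $\tilde f_t=(p_t,q_t):\mathbb{CP}^1\to\mathbb{CP}^1\times\mathbb{CP}^1$. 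Since a smooth rational quartic on a smooth quadric has complex bi-degree $(1,3)$ (the genus formula $(m-1)(n-1)=0$ forces one factor to be $1$), after naming the rulings so that the degree-one projection is $p_t$, the map $q_t:\mathbb{CP}^1\to\mathbb{CP}^1$ is a continuous homotopy of rational maps of degree $3$.

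The key observation is a dichotomy for lines meeting $\mathbb{C}C_t$. A real line $l$ that is \emph{not} a ruling of $Q_t$ meets $\mathbb{C}Q_t$ in exactly two points, and since $\mathbb{C}C_t\subset\mathbb{C}Q_t$ it meets $\mathbb{C}C_t$ in at most those two; so if $l$ passes through a conjugate imaginary pair $k_t(z),k_t(\bar z)$ of curve points then $\mathbb{C}l\cap\mathbb{C}C_t=\{k_t(z),k_t(\bar z)\}$ exactly, with no real point. The only lines through such a pair that can pick up a third (necessarily real) intersection are the rulings of the family $F'$ on which $q_t$ is the degree-$3$ projection: such a ruling $L'_{[\gamma:\delta]}$ meets $C_t$ in the three points of $q_t^{-1}([\gamma:\delta])$, and it contains both $k_t(z)$ and $k_t(\bar z)$ precisely when $q_t(z)=q_t(\bar z)=[\gamma:\delta]$, which (as $q_t$ is real, so $q_t(\bar z)=\overline{q_t(z)}$) forces $[\gamma:\delta]$, and hence the remaining point $k_t(w_3)$, to be real. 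Thus the line spanned by $k_t(z),k_t(\bar z)$ meets $\mathbb{R}C_t$ \emph{iff} $q_t(z)\in\mathbb{RP}^1$.

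Consequently it suffices to track an imaginary parameter $z_t$ for which $q_t(z_t)$ is never real. Let $U=\mathbb{CP}^1\setminus\mathbb{RP}^1$, an open set, and let $z_0=z$ be the parameter with $k_0(z)$ one of the two points of $\mathbb{C}l\cap\mathbb{C}C_0$. At $t=0$ the line $l$ is not a ruling (it meets $\mathbb{C}C_0$ in exactly the conjugate pair and is disjoint from $\mathbb{R}C_0$), so by the dichotomy $q_0(z)\in U$. The preceding lemma, applied to the homotopy $q_t$ of degree-$3$ maps and the open set $U$, then provides a continuous extension $z_t\in q_t^{-1}(U)$ for all $t$. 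Because $q_t$ is real with $q_t(\mathbb{RP}^1)\subseteq\mathbb{RP}^1$, the condition $q_t(z_t)\in U$ automatically forces $z_t\in U$; hence $z_t$ is imaginary and $q_t(z_t)\neq q_t(\bar z_t)$, so the line $l_t$ through the genuine conjugate pair $k_t(z_t),k_t(\bar z_t)$ is not an $F'$-ruling, not an $F$-ruling (which meets $C_t$ in a single point), and therefore meets $\mathbb{C}C_t$ in exactly the imaginary pair and in no real point. Since $l_0=l$, the family $(C_t,l_t)$ is the required extension.

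The step I expect to be the main obstacle is the reduction in the first paragraph: arranging that the \emph{entire} isotopy, not merely its endpoints, lies on a continuously varying non-degenerate quadric, so that the lift $\tilde f_t$ and the degree-$3$ map $q_t$ are defined and continuous. This is exactly where the codimension estimate for curves on cones (and pairs of planes) is needed, to justify perturbing the given isotopy into the open locus of curves on hyperboloids without disturbing the two given pairs. Once this is secured, the remainder is a direct application of the preceding homotopy-extension lemma together with the ruling-intersection dichotomy, and requires no further estimates.
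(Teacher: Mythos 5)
Your core argument is exactly the paper's: put $C_t$ on a quadric, use the bi-degree $(1,3)$ lift so that the second projection $q_t$ is a degree-$3$ rational map, observe that the real line spanned by $k_t(z)$ and $k_t(\bar z)$ meets $\mathbb{R}C_t$ precisely when it is a ruling of the second family, i.e.\ precisely when $q_t(z)\in\mathbb{RP}^1$, and then apply the preceding homotopy-extension lemma with $U=\mathbb{CP}^1\setminus\mathbb{RP}^1$. That part is correct, and your ruling dichotomy (including the check that an $F$-ruling cannot contain a conjugate pair, and that $q_t(z_t)\notin\mathbb{RP}^1$ forces $z_t$ to stay imaginary) is carried out more carefully than in the paper, which states the same dichotomy in terms of the parametrization $([s:t],[p_2:p_3])$ and cites the same lemma.

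The genuine gap is the step you yourself flagged as the main obstacle: the reduction to a continuously varying family of hyperboloids. You justify it by saying that curves on degenerate quadrics form a locus of ``positive codimension'', so an arbitrarily small perturbation of the isotopy avoids it. That is not enough: a one-parameter family can be perturbed off a locus only when the codimension is at least $2$, and the paper's own dimension count (in the lemma on perturbing a quartic off a cone) shows that quartics lying on cones form a locus of codimension exactly $1$ --- dimension $18$ inside the $19$-dimensional space of parametrized quartics. Moreover, both sides of this wall consist of knots on hyperboloids, since a quartic knot can never lie on a sphere or ellipsoid (lemma~\ref{bidegree22} and the lemma following it); hence a path of knots may cross the wall, going from hyperboloid to cone and back to hyperboloid, and such a crossing is stable under arbitrarily small perturbations, rel endpoints or not. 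At a crossing time the family $q_t$ of degree-$3$ maps is simply undefined, and your argument breaks there. To be fair, the paper's proof silently assumes the same thing (it asserts that a degree-$4$ knot lies on a hyperboloid and uses the bi-degree parametrization for all $t$), so you identified a real soft spot; but your proposed repair does not close it. A repair that does work is to treat cone times directly: a nonsingular quartic on a cone cannot pass through the vertex (a parity argument on the coordinate polynomials, or the class computation on the resolved cone, shows the multiplicity at the vertex is even, hence the curve would be singular there), every ruling then meets the curve in exactly two points, and every non-ruling line meets the cone in at most two points; consequently, at a cone time \emph{every} line spanned by a conjugate pair of imaginary curve points misses $\mathbb{R}C_t$, so the constraint is vacuous there, and the remaining work is to choose $z_t$ continuously across such times. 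Finally, note that perturbing the given isotopy proves only that \emph{some} isotopy with the same endpoints extends to a family of pairs, which is weaker than the lemma as stated, though adequate for the use the paper makes of it.
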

    \begin{proof}
      A degree~$4$ knot lies on a hyperboloid. The line that is defined, already intersects the knot and therefore the hyperboloid in two imaginary points. If it is not already transversal to the to the hyperboloid, perturb it so that it is. It cannot intersect the hyperboloid (and therefore not even the knot lying on it) in any more points and therefore is disjoint from the real part of $C$. 

      Let $k_t$ be the parametrization of $C$. We need to find a path $z_t$ so that and the line $l_t$ be defined by the points $k(z_t)$ and $k(\bar{z_t})$ does not intersect the curve in any more points. This means that the line $l_t$ must never be the real ruling. In other words, $[p_2(z):p_3(z)]$ must never be real in the parametrization $([s:t],[p_2:p_3])$, where the $p_i$'s are of degree~$3$, because the ruling is defined by $([x_0:x_1],[\alpha:\beta])$ for some real $\alpha$ and $\beta$ and it intersects the curve when for those values of $z$ such that $[p_2(z):p_3(z)]=[\alpha:\beta]$. But the previous lemma shows that this is always possible. 
\end{proof}

      If the line defined by a pair of conjugate imaginary points, $k(z)$ and $k(\bar{z})$, is such that no real point of the curve lies on it, then each real point and this line defines a plane belonging to the pencil of curves that is parametrized by $\mathbb{RP}^1$. This defines a map $\theta:\mathbb{RP}^1 \to \mathbb{RP}^1$.

\begin{lemma}If $\theta$ is surjective, it is a two sheeted covering of degree 2.\end{lemma}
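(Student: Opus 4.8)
The plan is to realize $\theta$ as the real restriction of a degree-$2$ rational self-map of $\mathbb{CP}^1$, and then to use the reality of its ramification, together with the surjectivity hypothesis, to force it to be unramified over the reals, hence a covering of the circle. First I would fix two distinct real planes $F=0$ and $G=0$ of the pencil, so that every member is $\lambda F+\mu G=0$ and $\theta(w)=[G(k(w)):-F(k(w))]$. A priori $F(k(w))$ and $G(k(w))$ are real polynomials of degree $4$ in $w$, but since $l$ passes through the two imaginary points $k(z),k(\bar z)$ of $\mathbb{C}C$, both vanish at $z$ and hence (being real) at $\bar z$; dividing out the real quadratic $(w-z)(w-\bar z)$ leaves quadratics $\tilde F,\tilde G$. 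These are coprime, for a common root $w_0$ would put $k(w_0)\in\{F=0\}\cap\{G=0\}=l$, which meets $\mathbb{C}C$ only at $k(z),k(\bar z)$; thus $\mathbb{C}\theta:\mathbb{CP}^1\to\mathbb{CP}^1$ has algebraic degree exactly $2$. Because no real point of $C$ lies on $l$, the map $\theta$ is defined on all of $\mathbb{RP}^1$.

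Next I would invoke Riemann--Hurwitz: a degree-$2$ map has exactly two simple ramification points, and a degree-$2$ map sends each ramification value to a \emph{unique} doubled preimage. Since $\mathbb{C}\theta$ has real coefficients, the two ramification points are either both real or a complex-conjugate pair. In the conjugate case the two ramification values are also conjugate, and they must be non-real: if $\theta(p)$ were real for a non-real ramification point $p$, then $\theta^{-1}(\theta(p))$ would have to contain the two distinct doubled points $p$ and $\bar p$, which is impossible in degree $2$. This yields the key dichotomy: $\theta$ has either two real critical points, or none together with no real critical values.

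The main step is to exclude the first alternative using surjectivity, and this is where I expect the only genuine care is needed. If both critical points were real, they would cut $\mathbb{RP}^1$ into two arcs on each of which $\theta$ is monotone; since the two ramification values are distinct (a degree-$2$ map separates distinct ramification points over distinct values), $\theta$ would fold at both critical points and sweep out only the single subarc of the target between the two critical values, a proper subset of $\mathbb{RP}^1$, contradicting surjectivity. Hence the critical points form a conjugate pair, $\theta$ has no real critical point, and $\theta:\mathbb{RP}^1\to\mathbb{RP}^1$ is a local diffeomorphism, i.e.\ a covering. Finally, with no real critical values the number of real preimages of a point is locally constant, hence constant on the connected circle $\mathbb{RP}^1$; it is nonzero by surjectivity and at most $2$ by the algebraic degree, so it equals $2$ and $\theta$ is a $2$-sheeted covering. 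The topological conclusion is then immediate; the delicate points are the exact computation of the complex degree (coprimality of $\tilde F,\tilde G$) and the reality argument ruling out real ramification values.
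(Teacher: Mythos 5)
Your proof is correct, and it takes a genuinely different, more algebraic route than the paper's. The paper stays geometric: each plane of the pencil meets the degree-$4$ curve in the two fixed imaginary points plus two further points; a coincidence of those two (a tangent plane) is excluded by combining surjectivity with the absence of real double points on the knot (nearby planes would trace out crossing local branches of an embedded curve), and once every fibre consists of two distinct real points, small intervals of the pencil are evenly covered. You instead exhibit $\theta$ as the real restriction of a rational self-map of $\mathbb{CP}^1$ of algebraic degree exactly $2$, apply Riemann--Hurwitz to get exactly two simple ramification points, use reality of the ramification divisor to reduce to the dichotomy ``two real folds, or a conjugate pair of critical points with non-real critical values,'' and eliminate the first case by the fold-image argument: two real folds confine the image to a single closed arc between the two critical values, contradicting surjectivity. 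What your route buys: it never uses embeddedness of the knot (only that $l$ misses the real curve and meets $\mathbb{C}C$ exactly in the conjugate pair), and the degree-$2$ structure makes the ``at most two preimages'' and ``locally constant fibre count'' claims automatic; what the paper's buys is brevity and consistency with the intersection-theoretic arguments used throughout the section. Two small points in your write-up need one-line patches: (i) your coprimality argument does not exclude a common root of $\tilde F,\tilde G$ at $z$ or $\bar z$ themselves (this happens exactly when $l$ is the tangent line to $\mathbb{C}C$ at $k(z)$); but then, by realness, both $\tilde F$ and $\tilde G$ would be divisible by the real quadratic vanishing at $z,\bar z$, forcing $\theta$ to be constant and contradicting surjectivity; (ii) monotonicity of $\theta$ on the two arcs cut out by real critical points does not follow merely from the absence of interior critical points (a local diffeomorphism of an arc into the circle can wrap around); it follows from the degree-$2$ bound, since wrapping past a critical value would create a fibre of total multiplicity at least $3$ --- a bound you have already established, so the argument closes.
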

  \begin{proof}  The image of a point by $\theta$ corresponds to a plane from the pencil that passes through the point. Since the degree is 4 it intersects one more point which, will turn out to be distinct. If the points coincide, then since the map is surjective, the preimage of a small enough neighbourhood of the pencil will be two intervals that intersect each other at one point. This is impossible since the knot does not have real double points.

Since each plane from the pencil intersects the curve in two real points, the preimage of a small enough interval around the point will be a disjoint union of two intervals and will therefore be evenly covered. \end{proof}

\begin{corollary}	Consider two isotopic knots $C_1$ and $C_2$. If there is a plane $X_1$ disjoint from $\mathbb{R}C_1$, then an isotopy of knots $C_t$ between $C_1$ and $C_2$, can be extended to a pair of ($C_t$,$X_t$), where $X_t$ is disjoint from $\mathbb{R}C_t$. \end{corollary}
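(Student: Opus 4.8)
The plan is to reduce the corollary to the two lemmas that immediately precede it: the one extending a pair $(C,l)$ of a knot and a line meeting it in a conjugate imaginary pair along an isotopy, and the one analysing the pencil map $\theta$. First I would manufacture the line. Since $\mathbb{R}X_1$ is disjoint from $\mathbb{R}C_1$, the four points of $\mathbb{C}X_1\cap\mathbb{C}C_1$ (counted with multiplicity, and generically distinct) contain no real point; being imaginary, they fall into two conjugate pairs, and each conjugate pair $\{w,\bar w\}$ spans a real line contained in $X_1$. Choosing one such line $l_1$, the pair $(C_1,l_1)$ satisfies the hypotheses of the preceding lemma: $\mathbb{C}l_1$ meets $\mathbb{C}C_1$ in a conjugate imaginary pair and $l_1\subset X_1$ is disjoint from $\mathbb{R}C_1$.

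Applying that lemma, I would extend the given isotopy $C_t$ to a family $(C_t,l_t)$ in which, for every $t$, $\mathbb{C}l_t$ meets $\mathbb{C}C_t$ in a conjugate imaginary pair and $\mathbb{R}l_t$ is disjoint from $\mathbb{R}C_t$. For each $t$ the pencil of planes through $l_t$ is a copy of $\mathbb{RP}^1$, and sending each real point of $C_t$ to the unique plane of this pencil containing it (such a plane exists because $l_t$ misses $\mathbb{R}C_t$) gives a map $\theta_t:\mathbb{RP}^1\to\mathbb{RP}^1$. Since $l_t$ and the parametrization of $C_t$ vary continuously, $\theta_t$ is a homotopy, so its degree is constant in $t$. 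At $t=0$ the plane $X_1$ lies in the pencil through $l_1$ and misses $\mathbb{R}C_1$, so it is a point of the codomain not in the image of $\theta_0$; hence $\theta_0$ is non-surjective and of degree $0$. By the preceding lemma stating that a surjective $\theta$ is a two-sheeted covering (necessarily of degree $2$), no $\theta_t$ can be surjective, since each has degree $0\neq 2$. Thus for every $t$ there is a plane through $l_t$ disjoint from $\mathbb{R}C_t$.

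It remains to make this choice continuous in $t$, which is the main technical point. Because $\mathbb{RP}^1$ is connected, the image $B_t=\theta_t(\mathbb{RP}^1)$ is a connected proper closed subset, hence a single closed arc, and the planes disjoint from $\mathbb{R}C_t$ form the complementary open arc $\mathbb{RP}^1\setminus B_t$. The endpoints of $B_t$ are exactly the planes of the pencil tangent to $C_t$ (the critical values where the two real intersection points coincide), and these vary continuously with $t$; consequently the complementary open arcs assemble into an open subset of $[0,1]\times\mathbb{RP}^1$ whose fibre over each $t$ is a single interval. Over the contractible base $[0,1]$ such a bundle of intervals admits a continuous section through the prescribed initial plane $X_1$ (for instance, one may transport $X_1$ and then follow the midpoint of the complementary arc). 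Taking $X_t$ to be this section yields a family $(C_t,X_t)$ with $X_t$ disjoint from $\mathbb{R}C_t$ for all $t$, as required. The subtlety I expect to demand the most care is verifying that the endpoints of $B_t$ — equivalently the tangent planes in the pencil — move continuously and that the complementary arc never degenerates, so that the interval bundle is genuine and a section through $X_1$ can indeed be chosen.
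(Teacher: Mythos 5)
Your proposal is correct and follows essentially the same route as the paper: it extends the line via the preceding lemma, uses the homotopy invariance of the degree of $\theta_t$ together with the surjective-implies-degree-$2$ lemma to rule out surjectivity for all $t$, and then continuously selects a plane from the complementary interval of the pencil. The paper's own proof is just a terser version of this (phrased contrapositively, with the line extension and the section-through-$X_1$ argument left implicit), so your additional care about the interval bundle and the initial condition only fills in details rather than changing the method.
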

  \begin{proof}  If there does not exist a plane disjoint from a knot $k$, then the map $\theta$ corresponding to it must be surjective and hence a two sheeted covering of degree 2. The map corresponding to any knot isotopic to $C$ will also have degree 2 and will therefore be surjective. 
  
    The planes which do intersect the curve in real points form an interval in the pencil. The complement is also an interval, which we have just shown is non-empty throughout the isotopy and therefore it is possible to always continuously choose a plane from it which will not intersect the knot in any real points. 
  \end{proof}
    Although the plane that is obtained at the end of the isotopy is disjoint from $k_2$, it need not be the one that we want, but we also have:

    \begin{lemma}If there are two planes $X_1$ and $X_2$ that intersect a knot in only imaginary points, then they are connected by a path of planes $X_t$ that are all disjoint from the real part of the knot.\end{lemma}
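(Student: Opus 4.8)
The plan is to bypass the line-by-line pencil analysis entirely and instead describe the set of planes disjoint from $\mathbb{R}C$ as the projectivization of a convex cone, which is automatically connected. Fix a parametrization $k:\mathbb{RP}^1\to\mathbb{RP}^3$ of the (degree~$4$) knot $C$, and let $W$ denote the $4$-dimensional space of linear forms on $\mathbb{RP}^3$, so that $\mathbb{P}(W)$ is exactly the space of real planes. Define the linear map $\Phi:W\to\{\text{real binary quartics}\}$ by $\Phi(\lambda)=\lambda\circ k$. The plane $X=\{\lambda=0\}$ is disjoint from $\mathbb{R}C$ precisely when $\lambda(k(r))\neq 0$ for every real $r\in\mathbb{RP}^1$, i.e. precisely when the binary quartic $\Phi(\lambda)$ has no real root. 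First I would record that $\Phi$ is injective: if $\Phi(\lambda)\equiv 0$ then $C$ would lie in the plane $\{\lambda=0\}$, but a planar rational quartic has $\frac{(4-1)(4-2)}{2}=3$ nodes and so could not be a knot. Hence $\mathbb{P}(\Phi)$ identifies $\mathbb{P}(W)$ with the projectivization $\mathbb{P}(U)$ of the $4$-dimensional image $U=\Phi(W)$.

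The key observation is that a real binary quartic, being homogeneous of even degree, has no real root if and only if it is \emph{definite}: it is either strictly positive or strictly negative on $\mathbb{R}^2\setminus\{0\}$ (otherwise the intermediate value theorem on the circle produces a zero). The positive-definite quartics form an open, salient, convex cone $P^+$ (a positive combination of everywhere-positive forms is everywhere positive), and the negative-definite ones form the opposite cone $P^-=-P^+$. Thus the planes disjoint from $\mathbb{R}C$ correspond to the set $R=\mathbb{P}\big((P^+\cup P^-)\cap U\big)$. Now $P^+\cap U$ is the intersection of a convex set with a linear subspace, hence convex, hence connected; its continuous image under the quotient $\mathbb{R}^5\setminus\{0\}\to\mathbb{RP}^4$ is therefore connected. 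The decisive point is that this quotient identifies antipodes, so $\mathbb{P}(P^-\cap U)=\mathbb{P}(-(P^+\cap U))=\mathbb{P}(P^+\cap U)$; the two opposite cones collapse to one connected region. Consequently $R=\mathbb{P}(P^+\cap U)$ is connected, and since by hypothesis $X_1,X_2\in R$, any path between them inside $R$ is the desired family of planes disjoint from $\mathbb{R}C$.

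The step I expect to require the most care is not any single estimate but the bookkeeping of the antipodal identification: in the vector space the definite quartics split as two disjoint convex cones $P^+$ and $P^-$, and it would be a mistake to conclude that $R$ has two components. What rescues connectivity is precisely that $\lambda$ and $-\lambda$ define the same plane, so the two cones are glued together upon projectivizing. I would therefore state the connectedness of $\mathbb{P}(P^+\cap U)$ carefully, noting that $P^+$ is salient (it contains no antipodal pair, as $P^+\cap P^-=\emptyset$) so that the radial projection of $(P^+\cap U)\setminus\{0\}$ onto $\mathbb{P}(P^+\cap U)$ is a homeomorphism and no degeneration occurs. A pleasant feature of this approach is that it uses neither the hyperboloid nor the real bi-degree of $C$; it needs only that $C$ is non-degenerate and of even degree, so the same argument would apply verbatim to disjoint planes of any even-degree knot in $\mathbb{RP}^3$.
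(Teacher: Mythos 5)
Your proof is correct, but it proceeds along a genuinely different route from the paper's. The paper stays one-dimensional: it takes the pencil of planes through the line $X_1\cap X_2$, maps the real part of the knot into that pencil via the map $\theta$ sending each real point of the knot to the unique plane of the pencil containing it, and observes that the image is a proper connected (compact) subset of $\mathbb{RP}^1$ missing the two points corresponding to $X_1$ and $X_2$; since the complement of such an interval in $\mathbb{RP}^1$ is itself connected, the two planes can be joined inside the pencil. That argument is short, elementary, uses the connectedness of the real part of a rational curve, and matches the pencil technique used in the neighbouring lemmas. Your argument is global: you identify planes disjoint from $\mathbb{R}C$ with definite binary quartics in a linear subspace $U$, exploit convexity of the cone $P^+$ of positive-definite forms, and note that projectivization glues $P^+\cap U$ to its antipodal cone $P^-\cap U$, so the whole region of admissible planes is the continuous image of a convex set, hence path-connected. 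This buys strictly more than the lemma asks: it shows the \emph{entire} space of planes disjoint from $\mathbb{R}C$ is path-connected (not merely that two given planes can be joined), it never uses the hyperboloid, the bi-degree, or even connectedness of $\mathbb{R}C$, and it works verbatim for knots of any even degree. Your handling of the one delicate point -- that the two disjoint cones $P^\pm$ do not produce two components because $\lambda$ and $-\lambda$ define the same plane -- is exactly right, and your injectivity of $\Phi$ is justified at the same level of rigor as the paper's own Lemma~\ref{deg4lin} (a planar rational quartic cannot be non-singular). One phrase is harmlessly sloppy: the radial projection of $(P^+\cap U)\setminus\{0\}$ onto its projectivization is a quotient collapsing rays, not a homeomorphism (a homeomorphism only after passing to a sphere section, using saliency); but connectedness needs only that this map is a continuous surjection from a path-connected set, so the conclusion is unaffected.
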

      \begin{proof}      The two planes intersect in a line that define a pencil of planes. Again, we may define a map $\theta: \mathbb{RP}^1 \to \mathbb{RP}^1$ by taking each point on the real part of the knot to the plane defined by it and the line. The image is a connected interval that is a proper subset  of $\mathbb{RP}^1$ because the two given planes do not intersect the knot in real points. The complement of a connected interval in $\mathbb{RP}^1$ is an interval and therefore one can choose path to connect the planes in the pencil. \end{proof}

Observe that this wall is the same as one of the earlier walls because we can isotope the knot so that the solitary double point becomes a cusp and thereafter a non-solitary double point. This can be seen by considering its corresponding pair of the real degree~$4$ rational curve and the plane that intersects the curve in only imaginary points. Move the plane so that it is tangential to the curve and thereafter intersects it in a real pair. At the stage that it is tangential to the curve is when the curve lifts to one in the sphere that has a cusp. 

We have proved that:
\begin{theorem}
  In the space of rational curves of degree~$6$ in $S^3$, there are $4$ rigid isotopy classes of curves that have only one double point as a singularity. 
\end{theorem}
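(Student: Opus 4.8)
The plan is to assemble the two cases already treated—non-solitary (real) double points and solitary double points—through the double covering of Theorem~\ref{maincorrespondence}, and then to count the resulting rigid isotopy classes, organizing them by mirror symmetry and by the smooth knot types they bound.

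For a real double point ($k=2$), Theorem~\ref{maincorrespondence} presents the degree~6 knots with one such double point as a double cover of $\mathcal{P}'_{4,2}$, and Theorem~\ref{retract} identifies the homotopy type of $\mathcal{P}'_{4,2}$ with that of $\mathcal{P}_{4,2}$. By the preceding lemma, the rigid isotopy class of a pair $(C,X)\in\mathcal{P}_{4,2}$ is determined by the rigid isotopy class of the underlying degree~4 knot $C\subset\mathbb{RP}^3$, and by Bj{\"o}rklund's classification $C$ is either the unknot or the two-crossing knot, each occurring together with its mirror image. I would feed these into the theorem just proved: the unknot underlies a wall between two unknots, while the two-crossing knot underlies a wall between the unknot and the trefoil. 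Since each lift of a pair is the mirror of the other lift, the two mirror-inequivalent unknot pairs assemble into a single mirror pair of unknot--unknot walls, and likewise the two two-crossing pairs assemble into a single mirror pair of unknot--trefoil walls; this yields four candidate classes.

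For a solitary double point ($k=0$), the disjoint-plane lemmas show that an isotopy of the degree~4 knot extends to an isotopy of pairs whose plane stays disjoint from the real locus, so the fibre structure is governed by the same underlying knots. The decisive point is the cusp transition already observed: a solitary double point can be pushed through a cusp into a non-solitary one, which on the side of pairs corresponds to sliding the disjoint plane until it becomes tangent and then secant to the degree~4 curve. Hence every solitary wall is rigidly isotopic to one of the walls found in the $k=2$ case and contributes no new class. Finally, to see that the four candidate classes are genuinely distinct, I would invoke the encomplexed writhe of the two smooth classes a wall separates: these are rigid isotopy invariants, and they distinguish a wall between two unknots from a wall between the unknot and the trefoil, and distinguish each such wall from its mirror.

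The hard part will be the bookkeeping of mirror images under the double cover—ensuring that the eight a priori lifts of the four $\mathbb{RP}^3$ pair classes collapse to exactly four $S^3$ classes, neither more nor fewer—together with making the cusp transition for the solitary case rigorous enough to guarantee that the solitary wall really coincides with a non-solitary one rather than forming a fifth class.
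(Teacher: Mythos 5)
Your proposal is correct and follows essentially the same route as the paper: reduction to pairs via Theorem~\ref{maincorrespondence} and Theorem~\ref{retract}, classification of pairs in $\mathcal{P}_{4,k}$ by the rigid isotopy class of the underlying degree~4 knot combined with Bj{\"o}rklund's list, and the tangency/cusp transition that folds the solitary case into the non-solitary one. The two difficulties you flag at the end---the mirror-image bookkeeping under the double cover and the rigor of the cusp transition---are precisely the points the paper itself leaves implicit, so your sketch is no less complete than the paper's own argument.
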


\subsection{The space of real rational curves in $S^3$}
\label{manifold}
The space of rational curves in $\mathbb{RP}^3$ is a projective space and therefore a closed manifold. When we consider those curves in $\mathbb{RP}^4$ whose image lies in $S^3$, the situation is no longer straightforward. However, we will prove that this space is also a closed manifold. This will enable us to use the fact that the top homology is $\mathbb{Z}/2$ and the Poincare duality to find a bound on the number of components. We will then show that in the case of degree~$6$, the bound obtained is sharp.

\subsubsection{The space of real rational curves in $S^3$ is a manifold}
We will need two lemmas from linear algebra to prove the following theorem which states that the space of degree~$6$ rational curves in $S^3$ is a manifold. 
\begin{lemma}
  \label{trivial}
  If $h_0, h_1, \dots , h_{d}$ are linearly independent polynomials of degree $\leq d$, then  the determinant of the following matrix will be non-zero:

  \[\left(\begin{array}{cccc}
      h_0(t_1) & h_1(t_1) &\cdots & h_{d}(t_1)\\
      h_0(t_2) & h_1(t_2) &\cdots & h_{d}(t_2)\\
      \vdots&\cdots&\cdots&\vdots\\
      h_0(t_{d+1}) & h_1(t_{d+1}) &\cdots & h_{d}(t_{d+1})\\
    \end{array}\right)\]
    as long as all the $t_i$'s are distinct.
\end{lemma}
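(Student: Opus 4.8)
The plan is to read the columns of the matrix as the images of the polynomials $h_j$ under the evaluation-at-nodes map, and then exploit the fact that this map is an isomorphism. Let $P_d$ denote the $(d+1)$-dimensional real vector space of polynomials of degree at most $d$, and define the linear evaluation map $\Phi\colon P_d\to\mathbb{R}^{d+1}$ by $\Phi(p)=(p(t_1),p(t_2),\dots,p(t_{d+1}))$. Its domain and codomain both have dimension $d+1$. The $j$-th column of the matrix in the statement is precisely $\Phi(h_j)$, so the whole argument reduces to showing that the vectors $\Phi(h_0),\dots,\Phi(h_d)$ are linearly independent.

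First I would show that $\Phi$ is injective. If $\Phi(p)=0$, then $p$ is a polynomial of degree at most $d$ vanishing at the $d+1$ \emph{distinct} points $t_1,\dots,t_{d+1}$; since a nonzero polynomial of degree $\leq d$ has at most $d$ roots, $p$ must be the zero polynomial. Hence $\Phi$ is injective, and because the dimensions agree it is an isomorphism. Then, since $h_0,\dots,h_d$ are linearly independent in $P_d$ and $\Phi$ is an isomorphism, the images $\Phi(h_0),\dots,\Phi(h_d)$ are linearly independent in $\mathbb{R}^{d+1}$ and therefore form a basis. A square matrix whose columns form a basis of $\mathbb{R}^{d+1}$ has nonzero determinant, which is exactly the assertion.

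There is essentially no obstacle here — the lemma is a repackaging of the standard fact that polynomial interpolation at distinct nodes is uniquely solvable, and the only thing one must invoke is that a degree $\leq d$ polynomial cannot have more than $d$ roots. For a purely computational alternative, one may expand each $h_j=\sum_{k=0}^{d}c_{kj}\,t^k$; then the matrix factors as $VC$, where $V=(t_i^{\,k})$ is the Vandermonde matrix with rows indexed by the nodes and $C=(c_{kj})$ is the change-of-basis matrix. Consequently the determinant equals $\det V\cdot\det C$, with $\det V=\prod_{i<i'}(t_{i'}-t_i)\neq 0$ by distinctness of the $t_i$ and $\det C\neq 0$ by linear independence of the $h_j$, so the product is nonzero.
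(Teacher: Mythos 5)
Your main argument is correct and is essentially the paper's own proof: both reduce linear dependence of the columns to the existence of a nonzero linear combination of the $h_j$, i.e.\ a nonzero polynomial of degree $\leq d$, vanishing at the $d+1$ distinct points $t_i$, which is impossible; phrasing this via the evaluation isomorphism $\Phi$ is just a cleaner packaging of the same step. The Vandermonde factorization $VC$ you offer as an alternative is also correct and is a genuinely computational route the paper does not take, but it adds nothing needed for the lemma.
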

\begin{proof}
  No linear combination of $\{h_0, h_1, \dots , h_{d}\}$  can result in the zero polynomial since they are linearly independent. However, if the columns of this matrix are linearly dependant, it is equivalent to a linear combination of $h_i$'s having $d+1$ roots, which is impossible for a polynomial of degree less than $d+1$.
\end{proof}
\begin{lemma}
  \label{resultant}
  Let $p_0(t)$ and $p_1(t)$ be two polynomials of degree~$d$ that do not share a root. Then the set  \[\{p_0(t), tp_0(t),\ldots, t^{d-1}p_0(t), p_1(t), tp_1(t),\ldots, t^{d-1}p_1(t),t^dp_1(t),\ldots,t^{d+k}p_1(t)\}\] is linearly independent in the space of polynomials of degree $2d+k$. 
\end{lemma}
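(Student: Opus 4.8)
The plan is to reduce the claimed linear independence to the coprimality of $p_0$ and $p_1$. First I would count the listed polynomials: there are $d$ shifts $t^j p_0$ (for $j=0,\ldots,d-1$) and $d+k+1$ shifts $t^j p_1$ (for $j=0,\ldots,d+k$), giving $2d+k+1$ elements in total. This matches the dimension of the space of polynomials of degree $\leq 2d+k$, so proving linear independence is equivalent to showing the set is a basis; in particular it suffices to rule out any nontrivial vanishing linear combination.

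Next I would encode such a combination compactly. Collecting the coefficients of the shifts of $p_0$ into a polynomial $A(t)=\sum_{j=0}^{d-1}a_j t^j$ of degree $\leq d-1$, and those of the shifts of $p_1$ into $B(t)=\sum_{j=0}^{d+k}b_j t^j$ of degree $\leq d+k$, any linear relation among the listed polynomials takes the form $A(t)p_0(t)+B(t)p_1(t)=0$, that is, $A(t)p_0(t)=-B(t)p_1(t)$. Thus the whole question becomes one about a single polynomial identity rather than a determinant.

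The core step is to exploit that $p_0$ and $p_1$ have no common root. Working in $\mathbb{C}[t]$, having no common root is the same as $\gcd(p_0,p_1)=1$. Since $p_1$ divides the right-hand side, it divides $A(t)p_0(t)$; being coprime to $p_0$, it must therefore divide $A$. But $\deg A \leq d-1 < d = \deg p_1$, so the only multiple of $p_1$ of degree $\leq d-1$ is the zero polynomial, forcing $A\equiv 0$. Then $B(t)p_1(t)=0$ with $p_1\not\equiv 0$ gives $B\equiv 0$, so all the $a_j$ and $b_j$ vanish, which is exactly the desired linear independence.

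The one point that requires care — and the only thing I would flag as a genuine obstacle rather than routine bookkeeping — is the reading of ``do not share a root.'' The argument needs honest coprimality over $\mathbb{C}$, so this hypothesis must be interpreted as ``no common complex root''; a shared real root alone would break coprimality and the statement would fail. Granting that, the asymmetric degree bounds are doing all the work: the shifts of $p_0$ are truncated at $t^{d-1}$ precisely so that $\deg A < \deg p_1$, while the shifts of $p_1$ are allowed to run up to $t^{d+k}$ so that the span reaches degree $2d+k$. No explicit resultant or Sylvester-matrix computation is needed, though the statement is of course the resultant-nonvanishing phenomenon in disguise.
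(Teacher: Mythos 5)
Your argument is correct, but it takes a genuinely different route from the paper's. The paper proves the lemma by brute force on the coefficient matrix: it writes out the $(2d+k+1)\times(2d+k+1)$ matrix whose rows are the coefficient vectors of the listed polynomials, observes that expanding along the last column(s) reduces it to the Sylvester matrix of $p_0$ and $p_1$, and concludes that the determinant is $\pm b_d^{k_1}\mathrm{R}(p_0,p_1)\neq 0$ since the resultant of polynomials with no common root is nonzero and the leading coefficient $b_d$ of $p_1$ is nonzero. You avoid determinants entirely: you encode a vanishing linear combination as $A(t)p_0(t)+B(t)p_1(t)=0$ with $\deg A\leq d-1$ and $\deg B\leq d+k$, and use coprimality together with the degree bound $\deg A<\deg p_1$ to force $A\equiv 0$ and then $B\equiv 0$. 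These are two presentations of the same phenomenon --- the Sylvester-type matrix is exactly the matrix of the linear map $(A,B)\mapsto Ap_0+Bp_1$ in monomial bases, and your divisibility argument is the standard proof that this map is injective for coprime inputs, i.e.\ that the resultant does not vanish. What your version buys is self-containedness and uniformity in $k$: the paper's expansion, as literally written, removes only one final row and column (which yields the Sylvester matrix only when $k=0$), and the exponent in $\pm b_d^{k_1}$ is left unexplained, whereas your argument handles all $k+1$ extra shifts of $p_1$ in one stroke. What the paper's version buys is that the nonvanishing determinant is precisely the quantity fed into lemma~\ref{trivial} and into the minor of the differential $d\theta$ in the manifold theorem, so no translation between ``injective map'' and ``nonsingular matrix'' is needed there. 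Your caveat about reading ``do not share a root'' as ``no common \emph{complex} root'' is well taken and is not pedantry: for real polynomials the dangerous case is a common conjugate pair of imaginary roots (e.g.\ $p_0=p_1=t^2+1$ shares no real root but the set is visibly dependent), and this complex reading is exactly the hypothesis under which $\mathrm{R}(p_0,p_1)\neq 0$, so both proofs need it; it is also the reading required in the paper's application, where the coordinate polynomials are arranged not to vanish simultaneously at any point of $\mathbb{CP}^1$.
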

\begin{proof}
  Let $p_0(t)=a_0+a_1t+\ldots+a_d$ and $p_1(t)=b_0+b_1t+\ldots+b_d$ where $a_d\neq 0 \neq b_d$.
  The set of polynomials \[\{p_0(t), tp_0(t),\ldots, t^{d-1}p_0(t), p_1(t), tp_1(t),\ldots, t^{d+k-1}p_1(t),t^{d+k}p_1(t)\}\] is linearly independent if and only if the following matrix is non-singular:
\[
  \left(\begin{array}{cccccccccccc}
  a_0 & a_1 & \cdots & a_{d-1} & a_d & 0   & 0   & 0      & 0   & 0 & &0 \\
  0   & a_0 & a_1 & \cdots & a_{d-1} & a_d & 0   & 0   & 0      & 0 & &0 \\
&&&&&\vdots&&&&& \\
0   & 0   & 0      & 0   & 0   & a_0 & a_1 & \cdots & a_{d-1} & a_d &  &0 \\
b_0 & b_1 & \cdots & b_{d-1} & b_d & 0   & 0   & 0      & 0   & 0 & \vdots &0 \\
0   & b_0 & b_1 & \cdots & b_{d-1} & b_d & 0   & 0   & 0      & 0 & \vdots &0 \\
&&&&&\vdots&&&&&& \\
0   & 0   & 0      & 0   & 0   & b_0 & b_1 & \cdots & b_{d-1} & b_d & &0 \\
&&&&&\vdots&&&&&& \\
0 &0   & 0   & 0      & 0   & 0   & b_0 & b_1 & \cdots & b_{d-1} & &b_d
\end{array}\right)
  \]

  Observe that the submatrix which is obtained by removing the last row and column is the Sylvester matrix whose determinant is the resultant $\mathrm{R}(p_0,p_1)$ which is non-zero because the $p_0$ and $p_1$ do not share a root. The determinant of the original matrix is $\pm b_d^{k_1}\mathrm{R}(p_0,p_1)$ which is non-zero because  $b_d\neq 0$.
\end{proof}
\begin{theorem}
  The space of real parametrizations of rational curves of degree $6$ in $\mathbb{RP}^4$ that lie in the sphere, is a manifold of dimension $21$ (co-dimension $13$).
\end{theorem}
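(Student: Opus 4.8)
The plan is to realize this space as the zero locus of an explicit map and verify that the map is a submersion, so that the regular value theorem produces the manifold structure and the dimension. A real parametrization of a degree~$6$ curve in $\mathbb{RP}^4$ is a $5$-tuple $k=(k_0,\dots,k_4)$ of binary forms of degree~$6$ with no common root. The coefficient tuples of such $k$ form an open subset $\hat U$ of $\mathbb{R}^{35}$, and since rescaling all $k_i$ by a common factor gives the same map $\mathbb{RP}^1\to\mathbb{RP}^4$, the parametrizations themselves form an open subset of $\mathbb{RP}^{34}$ of dimension~$34$. The condition that the image lie in $S^3$ is the polynomial identity $\Phi(k):=-k_0^2+k_1^2+k_2^2+k_3^2+k_4^2\equiv 0$; reading $\Phi(k)$ as the vector of its $13$ coefficients of a degree~$12$ binary form gives a map $\Phi:\hat U\to\mathbb{R}^{13}$, homogeneous of degree~$2$, whose zero locus is exactly the cone over the space we wish to describe.

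First I would compute the differential. In a direction $h=(h_0,\dots,h_4)$ of binary sextics one obtains $D\Phi_k(h)=2\bigl(-k_0h_0+k_1h_1+k_2h_2+k_3h_3+k_4h_4\bigr)$, so the image of $D\Phi_k$ equals $\sum_i k_i\cdot\mathbb{R}[s,t]_6$, the degree~$12$ part of the ideal generated by $k_0,\dots,k_4$. The crux of the proof, and the step I expect to be the main obstacle, is to show that this image is all of the $13$-dimensional space $\mathbb{R}[s,t]_{12}$, that is, that $D\Phi_k$ has full rank~$13$ at every point of $\Phi^{-1}(0)\cap\hat U$. Observe that this image depends only on the linear span $W=\mathrm{span}(k_0,\dots,k_4)$, and that $W$ is base-point-free (over $\mathbb{C}$) precisely because the $k_i$ have no common root.

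To establish full rank I would reduce, via Lemma~\ref{resultant} applied with $d=6$ and with no extra multiples of $p_1$, to producing two coprime forms $p_0,p_1\in W$: the lemma then guarantees that $\{t^ip_0:0\le i\le 5\}\cup\{t^jp_1:0\le j\le 6\}$ is a basis of $\mathbb{R}[s,t]_{12}$, and since $p_0\cdot\mathbb{R}[s,t]_6$ and $p_1\cdot\mathbb{R}[s,t]_6$ both lie in the image, surjectivity follows at once. The existence of a coprime pair is exactly where base-point-freeness enters: fixing any nonzero $f\in W$ with complex roots $r_1,\dots,r_6$, the subspaces $W_i=\{g\in W:g(r_i)=0\}$ are all proper because no $r_i$ is a common root, and a vector space over an infinite field is never a finite union of proper subspaces; hence some $g\in W$ shares no root with $f$ and is coprime to it. Working over $\mathbb{C}$ first and then noting that the resultant is a real polynomial on $W\times W$ which is not identically zero yields a real coprime pair. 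Lemma~\ref{trivial} can be used to phrase the required linear independence through evaluation at $13$ distinct points if one prefers an evaluation-matrix formulation.

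Finally I would assemble the dimension count. Since $D\Phi$ is a submersion along $\Phi^{-1}(0)\cap\hat U$, this cone is a submanifold of $\mathbb{R}^{35}$ of dimension $35-13=22$; it is invariant under the free scaling action of $\mathbb{R}^{\times}$, so its image in $\mathbb{RP}^{34}$ is a manifold of dimension $22-1=21$, of codimension $34-21=13$, as claimed. The Euler relation $D\Phi_k(k)=2\Phi(k)=0$ confirms that the radial direction is tangent to the cone, so no transversality is lost in passing to the projective quotient.
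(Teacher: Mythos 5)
Your proof is correct, and its skeleton coincides with the paper's: both realize the space as the zero locus of the quadratic map recording that $-k_0^2+k_1^2+\cdots+k_4^2$ vanishes identically as a degree-$12$ binary form, both prove that map is a submersion by exhibiting thirteen independent elements of the image of the differential coming from a coprime pair of sextics (via Lemma~\ref{resultant}, with Lemma~\ref{trivial} used in the paper's evaluation-matrix formulation), and both conclude by the regular value theorem. The paper evaluates $q$ at $13$ fixed points rather than reading off coefficients, and works in affine charts of $\mathbb{RP}^{34}$ rather than on the cone in $\mathbb{R}^{35}$; these differences are cosmetic. Where you genuinely depart from the paper is the crucial surjectivity step. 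The paper insists on finding the coprime pair among the five coordinate polynomials themselves; when every two coordinate polynomials share a root this fails, and the paper patches it by composing $\theta$ with a rotation preserving $S^3$, chosen by transversality so that the rotated curve misses all codimension-two subspaces $\{x_i=x_j=0\}$. Your observation that the image of $D\Phi_k$ depends only on the span $W$ of $k_0,\dots,k_4$, combined with the fact that a vector space over an infinite field is not a finite union of proper subspaces, produces a coprime pair inside $W$ at every point of the zero locus, so the degenerate case never arises and no auxiliary group action or local modification of the map is needed; this is a cleaner and more uniform treatment of exactly the point where the paper's argument is most delicate. One small technicality, which your proof shares with the paper's: Lemma~\ref{resultant} as stated assumes both polynomials have nonzero leading coefficient (equivalently, no root at $[1:0]$ after dehomogenizing); in your setup this is repaired for free by also avoiding, when selecting the pair, the proper subspace of forms in $W$ vanishing at $[1:0]$.
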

\begin{proof}
   Denote any rational parametrization explicitly by $[p_0:p_1:p_2:p_3:p_4]$, where each $p_i$ is a homogeneous polynomial of the same degree. Let $q(t)=p_1(t)^2+p_2(t)^2+p_3(t)^2+p_4(t)^2-p_0(t)^2$. The space of all rational parametrizations in $\mathbb{RP}^4$ is the projective space $\mathbb{RP}^{34}$. We will restrict attention to each chart, defined by one coefficient of one of the polynomials, and dividing all the coordinate polynomials by that coefficient. We will prove that for each chart, we can find a submersion to $\mathbb{RP}^{13}$ whose pullback of $0$ will be parametrizations of curves that lie in $S^3$

   Fix points $t_1, t_2, \ldots, t_{13}$ in $\mathbb{RP}^1$. Define the map $\theta: \mathbb{R}^{34}\to \mathbb{R}^{13}$ that takes the parametrization $[p_0:p_1:p_2:p_3:p_4]$ (restricted to a chart) to $(q(t_1),q(t_2),\ldots,q(t_{13}))$. The inverse image of $(0,0,\ldots,0)\in \mathbb{R}^{13}$ is precisely the set that we want because any degree~$6$ rational curve the intersects the sphere in $13$ distinct points must lie on the sphere. 

   Represent a parametrization in $\mathbb{RP}^{34}$ by coordinates $[a_0:a_2:\ldots:a_{34}]$ where  $a_0, \ldots, a_6$ are the coefficients of $p_0$, $a_7, \ldots, a_{13}$ are the coefficients of $p_1$, and so on. Each $q(t_i)$ is a quadratic polynomial in $a_i$ for a fixed $t_i$. 

   It is enough to show that for any arbitrary point in this space, the differential $d\theta$ is surjective. $d\theta$ is a $13\times 34$ matrix where the rows are of the form $(\frac{\partial q(t_i)}{\partial a_j})_{(i,j)}$. By using the chain rule on each entry it is easy to see that if $a_i$ is the $l$th coefficient of $p_k$, the matrix is $(\pm2 p_k(t_j)t_j^l)_{(i,j)}$. The sign is $-$ when $k=6$. 

   We now have to find a minor of the above matrix which has a non-zero determinant. Choose two $p_{i_1}$ and $p_{i_2}$ which do not share a root and whose coefficients are not the with the coefficient defining the chart. Consider the following $13\times13$ minor:

\[\left(\begin{array}{cccccc}
    p_{i_1}(t_1) & \ldots & t_1^5p_{i_1}(t_1)&p_{i_2}(t_1) & \ldots & t_1^6p_{i_2}(t_1)\\
      \vdots&\cdots&\cdots&\cdots&
      \cdots&\vdots\\
      p_{i_1}(t_{13}) & \ldots & t_{13}^5p_{i_1}(t_{13})&p_{i_2}(t_{13}) & \ldots & t_{13}^6p_{i_2}(t_{13})\\
    \end{array}\right)\]

    Lemma~\ref{resultant} shows that the polynomials 
    $\{p_{i_1}(t) , \ldots , t^5p_{i_1}(t),p_{i_2}(t) , \ldots , t^6p_{i}(t)\}$ are linearly independant, and lemma~\ref{trivial} shows that the above matrix is non-singular. 

    If the point $[a_0:a_2:\ldots:a_{34}]$ corresponds to a parametrization $[p_0:p_1:p_2:p_3:p_4]$ where two coordinate polynomials share a root, it means that the curve intersects the co-dimension two linear subspace defined by two coordinates of $\mathbb{RP}^4$ equal zero. By the transversality theorem, it is possible to find a diffeomorphism (for instance, a rotation) that preserves $S^3$ and takes the curve to one which does not intersect any of the co-dimension $2$ hypersurfaces defined by $x_i=0$ and $x_j=0$. Simply compose this diffeomorphism with the map $\theta$ to define a new map which will be regular when restricted to a small enough neighbourhood of $[a_0:a_2:\ldots:a_{34}]$ in a chart containing it. 
\end{proof}

\begin{remark}
The above theorem easily generalizes to the curves of higher degrees.
\end{remark}
\subsubsection{The space of real rational curves in $S^3$ is connected}
We do this in two steps because it is easier to prove the theorem for singular curves:

\begin{lemma}
  The space of real rational curves of degree $6$ in $S^3$ with at least one double point, is connected.
\end{lemma}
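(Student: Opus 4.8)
The plan is to use the transitive action of $\mathbb{P}O(4,1)$ on $S^3$ to push the singular point of every curve to a single fixed pole, and then to deduce connectedness of the resulting fibre from the connectedness of the space of degree~$4$ parametrizations in $\mathbb{RP}^3$.

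Write $\Sigma$ for the locus in question, the degree~$6$ curves in $S^3$ carrying at least one double point. Since the generic singular curve in a $3$-fold is one-nodal and cuspidal or multiply-singular curves arise as limits of nodal ones (a cusp splits into a node under a generic perturbation), $\Sigma$ is the closure of its nodal part $\Sigma^{\mathrm{nod}}$, so it suffices to prove $\Sigma^{\mathrm{nod}}$ connected. Introduce the incidence space $\tilde\Sigma=\{(C,p):p\text{ is a node of }C\}$ with projections $\rho(C,p)=C$ and $\sigma(C,p)=p$. Here $\rho$ is a continuous surjection onto $\Sigma^{\mathrm{nod}}$, so it is enough to show $\tilde\Sigma$ connected. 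The group $\mathbb{P}O(4,1)$ acts on $\tilde\Sigma$ covering its action on $S^3$, and its identity component already acts transitively on $S^3$ (this $S^3$ is the conformal sphere and $\mathbb{P}O(4,1)$ its M\"obius group); hence $\sigma$ is a fibre bundle over the connected base $S^3$, and $\tilde\Sigma$ is connected as soon as one fibre $\sigma^{-1}(p)$ is. Equivalently, a path in $\mathbb{P}O(4,1)$ lets us first rigidly isotope any singular curve, inside $\Sigma$, to one whose double point sits at a fixed pole $p$.

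It remains to connect any two degree~$6$ curves with a node at $p$. Projecting stereographically from $p$ sends such a curve to a degree~$4$ curve in $\mathbb{RP}^3$ with the configuration data of Theorem~\ref{maincorrespondence}. The full space of degree~$4$ parametrizations $[p_0:p_1:p_2:p_3]$ of $\mathbb{RP}^3$ is the connected projective space $\mathbb{RP}^{19}$, the honest degree~$4$ maps forming a dense open (still connected) subset. The reason for working in $\tilde\Sigma$, rather than in the one-nodal stratum alone, is that the fibre over $p$ also contains the \emph{degenerate} configurations in which the projected quartic is itself singular; these are exactly the configurations through which the finitely many isotopy types of nonsingular degree~$4$ knots in $\mathbb{RP}^3$ (classified by Bj{\"o}rklund) get joined, so that the whole fibre projects onto a connected base.

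The hard part will be the two discrete pieces of structure carried by the one-nodal curves in the fibre. By Theorem~\ref{maincorrespondence} those with a node at $p$ form a \emph{double} cover of $\mathcal{P}'_{4,k}$, with $k=0$ for a solitary and $k=2$ for a non-solitary node, the two sheets being mirror images; and a priori the $k=0$ and $k=2$ loci are disjoint. I would fuse all of these using the cuspidal degeneration already noted in the text: letting the node at $p$ collapse to a cusp produces a curve lying in the common closure of the $k=0$ and $k=2$ loci and of both mirror sheets. Combined with Theorem~\ref{retract} (which replaces $\mathcal{P}'_{4,k}$ by the more convenient $\mathcal{P}_{4,k}$) and the connectedness of the ambient space of quartic parametrizations, this joins every curve of the fibre to one fixed cuspidal curve at $p$, proving the fibre --- hence $\tilde\Sigma$, hence $\Sigma$ --- connected. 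The main obstacle, requiring the most care, is precisely to verify that these cuspidal and multiply-singular limits really lie in the closure of each piece and supply paths across the sheets, since that is where the discrete mirror and solitary/non-solitary invariants of the one-nodal walls must be annihilated.
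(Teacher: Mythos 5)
Your reduction scaffolding (the incidence space $\tilde\Sigma$ over $S^3$, transitivity of the identity component of $\mathbb{P}O(4,1)$, stereographic projection from the distinguished double point) is sound and matches the paper's opening move, which simply applies a sphere-preserving linear transformation so that the two given curves share a double point and then projects from it. But the heart of the lemma --- connectedness of the fibre of curves singular at the fixed pole $p$ --- is exactly what you do not prove: you yourself label the fusing of the two mirror sheets of the double cover of $\mathcal{P}'_{4,k}$, and the joining of the $k=0$ and $k=2$ loci, as ``the hard part'' and ``the main obstacle, requiring the most care,'' and you only gesture at how cuspidal degenerations might accomplish it. As written this is a plan whose decisive step is missing. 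There is a second, smaller gap: your opening reduction $\Sigma=\overline{\Sigma^{\mathrm{nod}}}$ requires every cuspidal or multiply-singular rational sextic \emph{lying on $S^3$} to be a limit of nodal sextics \emph{lying on $S^3$}; perturbing a cusp into a node while keeping the curve rational, of degree $6$, and on the quadric is not automatic (the relevant perturbation space is the one the paper must work to show is even a manifold), and it is neither justified nor actually needed.

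The paper's proof shows why none of this stratification is necessary, and this is where your route diverges unfavourably. Because the lemma asks only for ``at least one double point,'' the projected quartic may be an \emph{arbitrary} degree-$4$ rational curve: singular quartics downstairs simply lift to sextics with extra singularities, which still lie in the space under consideration. So after arranging the common double point, the paper takes any path $k_t$ between the two projections in the connected space of degree-$4$ rational curves, continuously chooses a conjugate imaginary pair $k_t(z_t), k_t(\bar z_t)$ among the intersection points with the plane at infinity (such a pair exists whether the double point is solitary, $k=0$, or not, $k=2$, so both cases are handled uniformly), and chooses a plane $P_t$ through that pair; each pair $(k_t,P_t)$ is then pulled back to a degree-$6$ curve on $S^3$ singular at $p$. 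The solitary/non-solitary dichotomy and the mirror sheets you worry about are invariants of the one-nodal stratum only; they dissolve the moment paths are allowed to pass through worse singularities, which is precisely the freedom the hypothesis ``at least one double point'' grants. In short, your set-up is compatible with the paper's, but your argument defers the essential connectivity step, and the stratified route you sketch would force you to prove closure and monodromy statements that the paper's direct path-lifting argument never needs.
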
 
  \begin{proof}
    Given two curves in $S^3$ with at least one double point, use a linear transformation so that they have a common double point. Project each of these curves from their common double point. Denote the projections by $C_1$ and $C_2$ and their respective parametrizations by $k_1$ and $k_2$. They each intersect the plane at infinity in at least one conjugate pair say $k_i(z_i)$ and $k_i(\bar{z_i})$.
    
    We know that each rational curve of degree~$6$ with one double point in $S^3$ corresponds to a pair $(C,P)$ where $C$ is a degree~$4$ real rational curve  in $\mathbb{RP}^3$ and $P$ is a plane that intersects $k$ in $4$ points such that at least two of them form a conjugate imaginary pair. 
    
    Consider a path $k_t$ between $k_1$ and $k_2$ and a path $z_t$ between $z_1$ and $z_2$. Extend the path to the path of pairs $(k_t,P_t)$ where each $P_t$ is continuously chosen so that it contains $k_t(z_t)$ and $k_t(\bar{z_t})$. Now make the plane $P_t$ coincide with the original plane at infinity. At each stage we get a pair that will be pulled back to a degree~$6$ curve in the sphere. 
  \end{proof}
  \begin{lemma}
  Given a curve $k_1$ in the space of real rational curves in $S^3$, there is a curve with one double point in its path connected component. 
\end{lemma}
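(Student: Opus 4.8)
The plan is to treat separately the case in which $k_1$ is already singular and the main case in which it is non-singular. If $k_1$ has a double point, then it lies in the space of degree~$6$ curves in $S^3$ with at least one double point, which by the previous lemma is connected and evidently contains curves with exactly one double point; so its path component already contains such a curve. Hence we may assume $k_1$ parametrizes a non-singular curve $C$, and the problem reduces to deforming $C$, through degree~$6$ curves in $S^3$, until two of its branches collide, thereby creating a single ordinary double point.

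I would carry out this deformation in the stereographic chart. Fix $p\in S^3$ with $p\notin C$ and let $\gamma=\pi_p(C)$ be the corresponding degree~$6$ curve in $\mathbb{RP}^3$, written on the affine chart $\mathbb{R}^3=\mathbb{RP}^3\setminus X_p$ as $(g_1,g_2,g_3)/g_0$ with the $g_i$ of degree~$6$. As recorded in section~\ref{stereographic}, such a $\gamma$ lifts back to a degree~$6$ curve in $S^3$ precisely when its intersection with $X_p$ lies on the standard empty conic $E$, that is, when $g_1(\rho)^2+g_2(\rho)^2+g_3(\rho)^2=0$ at each of the six roots $\rho$ of $g_0$. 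The decisive feature is that this lifting condition constrains $\gamma$ \emph{only} at the finitely many parameters lying over $X_p$: if I deform the numerators $g_1,g_2,g_3$ while keeping $g_0$ fixed and keeping the values $g_i(\rho)$ unchanged at the six roots of $g_0$, the condition is preserved automatically, the degree stays~$6$, and the entire family lifts to a path of degree~$6$ curves in $S^3$.

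Within this constrained family there remains ample freedom in the finite part: the ambient space of degree~$6$ curves in $S^3$ has dimension~$21$ by the manifold theorem, and fixing the behaviour at the six points over $X_p$ removes only a few dimensions. I would use this freedom to push two disjoint real arcs of $\gamma$ toward one another until they meet. Concretely, pick finite parameters $t_1^{\ast}\neq t_2^{\ast}$ with $\gamma(t_1^{\ast})\neq\gamma(t_2^{\ast})$ and choose the deformation $\gamma_\tau$ so that at the final time $\gamma_1(t_1^{\ast})=\gamma_1(t_2^{\ast})$; this is a single vector condition in $\mathbb{R}^3$, easily arranged inside the constrained family. Taking the first time $\tau_0$ at which any coincidence $\gamma_\tau(t_1)=\gamma_\tau(t_2)$ with $t_1\neq t_2$ occurs produces the desired curve. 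That this first contact is a single ordinary node, rather than something worse, follows from a transversality argument: the coincidence locus $\{(t_1,t_2,\tau):\gamma_\tau(t_1)=\gamma_\tau(t_2)\}$ is cut out by three equations in a three-dimensional domain, hence is generically zero-dimensional with $0$ a regular value of $(t_1,t_2,\tau)\mapsto\gamma_\tau(t_1)-\gamma_\tau(t_2)$, so for a generic deformation the first contact is an isolated transverse double point. Lifting this path back to $S^3$ connects $C$ to a degree~$6$ curve with exactly one double point.

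The main obstacle is the tension between the two competing demands: forcing a crossing in the finite part while never disturbing the delicate empty-conic condition at $X_p$ that keeps the family inside $S^3$ in degree~$6$. The device that resolves it is exactly the observation above that the two demands are supported on disjoint parts of $\mathbb{RP}^1$ — the infinity condition at the roots of $g_0$, the crossing away from them — so one may freeze the former and still manoeuvre freely in the latter. A secondary point to verify is that one genuinely \emph{reaches} a singular curve (the non-singular members of the constrained family must not form their own component); this is guaranteed by deliberately engineering the endpoint to be singular rather than hoping to drift into the discriminant. Finally, should a real crossing be geometrically obstructed for a particular $C$, the identical construction applied to a conjugate pair of imaginary parameters yields a solitary double point instead; either outcome lands us in the space of curves with one double point, completing the path and, together with the previous lemma, the connectedness of the whole space.
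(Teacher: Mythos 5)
Your high-level strategy (stereographic projection, freeze the data over the plane at infinity so the family keeps lifting to $S^3$, deform the finite part until two branches meet) is the same as the paper's, but the constrained family in which you propose to deform is far too small, and in it the deformation step is actually \emph{impossible}. Suppose $g_0$ has six distinct roots. If you keep $g_0$ fixed and keep the values $g_i(\rho)$ at those roots fixed, then for any admissible deformation the difference $g_i'-g_i$ is a polynomial of degree at most $6$ vanishing at six distinct points, hence a scalar multiple of $g_0$: that is, $g_i'=g_i+c_ig_0$. In the affine chart this says $\gamma'(t)=\gamma(t)+(c_1,c_2,c_3)$, i.e.\ your constrained family consists precisely of the translates of $\gamma$. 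Consequently $\gamma_\tau(t_1)-\gamma_\tau(t_2)$ does not depend on $\tau$, the coincidence $\gamma_1(t_1^{\ast})=\gamma_1(t_2^{\ast})$ you want to ``easily arrange'' can never be arranged (nor can the solitary-double-point variant at imaginary parameters), and no double point is ever created. The dimension count that led you astray is wrong: prescribing the values of $g_1,g_2,g_3$ at six points imposes $18$ conditions on their $21$ coefficients, so together with fixing $g_0$ you have cut the $27$-dimensional space of parametrizations down to the $3$-dimensional translation family, not down by ``only a few dimensions.'' The honest lifting condition is indeed much weaker than what you froze --- only the six real conditions $g_1(\rho)^2+g_2(\rho)^2+g_3(\rho)^2=0$, which let the intersection points slide along the empty conic and let the roots of $g_0$ move --- but exploiting that weaker constraint is exactly what would require a proof, and you have not given one.

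The paper escapes this trap with one extra idea your proposal is missing: it projects from a point \emph{on} the curve, so the image in $\mathbb{RP}^3$ has degree $5$, meeting the plane at infinity in one real point plus two conjugate pairs lying on the empty conic. Degree-$5$ polynomials have six coefficients, so each coordinate polynomial is determined by Lagrange interpolation at six parameters: the four fixed imaginary parameters over the empty conic together with two finite parameters $\lambda_1,\lambda_2$. One may therefore hold the infinity data fixed (so every member of the family lifts back to a degree-$6$ curve in $S^3$ through the point of projection), prescribe $k_t(\lambda_2)=w_2$ and $k_t(\lambda_1)=w_t$, and let $w_t$ travel to $w_2$; at the end, $\lambda_1$ and $\lambda_2$ map to the same point, producing the double point. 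The count $4+2=6$ conditions on $6$ coefficients is exactly what makes this work; with your degree-$6$ projection the count is $6+2=8>7$, which is why your scheme cannot be repaired without either letting the points at infinity move along the conic or first dropping the degree as the paper does.
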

\begin{proof}
  Project the curve from a point on it to a degree~$5$ curve in $S^3$. The projection will be a curve which intersects the plane at infinity in one real point and two conjugate pairs of imaginary points that lie on the empty conic. Denote the parametrization of the curve by $k:\mathbb{RP}^1\to\mathbb{RP}^3$, and fix the imaginary pairs $k(z_i)=v_i$ and $k(\bar{z_i})=\bar{v_i}$ for some fixed $z_i$ where $i=1, 2$. 
  Fix two distinct points $\lambda_1$ and $\lambda_2$ in $\mathbb{RP}^1$, then we can always find a curve so that $k(\lambda_1)=w_1$ and  $k(\lambda_2)=w_2$ for any pair $w_1$ and $w_2$, while still maintaining $k(z_i)=v_i$ and $k(\bar{z_i})=\bar{v_i}$ so that it lies on the imaginary conic at infinity and the curve can be lifted back to $S^3$. Now given a path $w_t$ between $w_1$ to $w_2$, for each $w_t$ there is a $k_t$ such that $k_t(\lambda_1)=w_t=k(\lambda_2)$, $k_t(z_i)=v_i$ and $k_t(\bar{z_i})=\bar{v_i}$ and to create a double point. 
\end{proof}

By the previous two lemmas we obtain the theorem:
\begin{theorem}
  The space of real rational curves of degree~$6$ in the sphere is connected.  \qed
\end{theorem}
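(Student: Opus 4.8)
The plan is simply to assemble the two lemmas that immediately precede the statement. Write $\mathcal{C}$ for the space of real rational curves of degree~$6$ in $S^3$ (the manifold produced in the previous subsection, which contains the non-singular curves \emph{together with} those carrying singularities), and let $\mathcal{S}\subset\mathcal{C}$ denote the subspace of curves possessing at least one double point. Since $\mathcal{C}$ is a manifold, connectedness and path-connectedness coincide, so it suffices to produce a path in $\mathcal{C}$ joining any two of its points.

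First I would invoke the first of the two lemmas, which asserts that $\mathcal{S}$ is path-connected: any two curves with a double point are brought to a common double point by a linear transformation, projected from it, and then joined through the corresponding space of pairs $(C,P)$ of degree-$4$ curves and planes in $\mathbb{RP}^3$. Next I would invoke the second lemma, which guarantees that the path-component of an arbitrary $k_1\in\mathcal{C}$ meets $\mathcal{S}$: projecting $k_1$ from a point on it and then sliding a chosen real point along a path while keeping the two imaginary intersection points pinned to the empty conic produces, within $\mathcal{C}$, a path from $k_1$ to a curve with one double point.

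With these in hand the argument is formal. Given arbitrary $C,C'\in\mathcal{C}$, the second lemma supplies paths in $\mathcal{C}$ from $C$ to some $D\in\mathcal{S}$ and from $C'$ to some $D'\in\mathcal{S}$; the first lemma supplies a path inside $\mathcal{S}\subset\mathcal{C}$ from $D$ to $D'$. Concatenating the three paths yields a path in $\mathcal{C}$ from $C$ to $C'$, so $\mathcal{C}$ is path-connected, hence connected.

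The genuine content lies entirely in the two lemmas rather than in their combination, and the hard part is the second lemma, where one must keep the moving imaginary intersection points on the standard empty conic at infinity throughout the deformation so that every intermediate curve actually lifts back to the sphere. The only thing to verify in the assembly itself is that all intermediate curves remain in $\mathcal{C}$—that is, remain degree-$6$ rational curves lying in $S^3$, possibly singular—which is exactly what the two constructions guarantee, precisely because $\mathcal{C}$ was defined to include the singular curves.
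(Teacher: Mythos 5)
Your proposal is correct and matches the paper exactly: the paper's entire proof of this theorem is the phrase ``By the previous two lemmas we obtain the theorem,'' i.e.\ precisely the formal concatenation you describe, with the space understood (as you note) to include the singular curves. The substantive work indeed resides in the two lemmas themselves, not in the assembly.
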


We now prove that:
\begin{lemma}
  The space of rational curves with \emph{exactly} one double point is a manifold of dimension $20$. 
  \label{mansing}
\end{lemma}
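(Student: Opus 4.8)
We must show that the space of degree-$6$ real rational curves in $S^3$ having \emph{exactly} one double point is a smooth manifold of dimension $20$, i.e.\ one less than the dimension $21$ of the full space of degree-$6$ rational curves in $S^3$ computed in the previous theorem.

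The plan is to mimic the submersion argument that was used to prove the full space is a manifold, but now cutting out the singular stratum rather than cutting out the sphere condition. First I would fix the ambient manifold: by the previous theorem, the space $\mathcal{M}$ of parametrizations $[p_0:\dots:p_4]$ of degree-$6$ rational curves lying in $S^3$ is a smooth manifold of dimension $21$ (co-dimension $13$ in $\mathbb{RP}^{34}$). A double point of the curve $C$ parametrized by $k$ corresponds to a pair $\{[s_1:t_1],[s_2:t_2]\}$ of distinct points of $\mathbb{CP}^1$ with $k(s_1,t_1)=k(s_2,t_2)$; the point is a \emph{double} point (transverse node) precisely when this identification is of the expected codimension and the two branches are distinct and non-tangent. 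So I want to describe, locally near a curve $C_0$ with exactly one node at $p=k(z_1)=k(z_2)$, the incidence condition as the zero set of a map and show it is a submersion.

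\smallskip
\noindent\textbf{Key steps.}

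\noindent\emph{Step 1 (local coordinates on the node).} Near $C_0$, the double point persists as a function of the curve: the two preimages $z_1(k), z_2(k)\in\mathbb{CP}^1$ vary smoothly, and since the node is transverse the condition ``$k$ has a node with these two branches'' is locally equivalent to the vanishing of the map
\[
  \Phi(k) \;=\; k(z_1(k)) - k(z_2(k)) \;\in\; \mathbb{RP}^4,
\]
read in affine coordinates as a map into $\mathbb{R}^4$ (four real equations from requiring the two image points of $\mathbb{RP}^4$ to coincide). I would instead phrase it cleanly by working on the incidence variety: parametrize by the triple $(k, z_1, z_2)$ with $z_1\neq z_2$ and impose $k(z_1)=k(z_2)$, then project. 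The equation $k(z_1)=k(z_2)$ is four scalar conditions (projectively), and I must check these four conditions are independent when restricted to the tangent space of $\mathcal{M}$.

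\noindent\emph{Step 2 (transversality / submersion).} The heart is showing $d\Phi$ is surjective onto $\mathbb{R}^4$ at a curve with a genuine transverse node, \emph{within} the tangent space to $\mathcal{M}$ (not the whole of $\mathbb{RP}^{34}$). Here I would reuse the linear-algebra input from Lemmas~\ref{trivial} and~\ref{resultant}: perturbing the coordinate polynomials $p_i$ independently, the displacement of $k(z_1)-k(z_2)$ is controlled by a matrix of evaluations $(p_i(z_1), p_i(z_2))$, and the independence of the evaluation functionals at the two distinct points $z_1,z_2$ (exactly the Vandermonde-type non-vanishing of Lemma~\ref{trivial}) gives surjectivity. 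One must confirm that the allowed directions — those tangent to $S^3$, i.e.\ in $\ker d\theta$ from the previous proof — still suffice; this is where I would be careful, since we are intersecting two codimension conditions. Because the two constraints ($q(t_i)=0$ defining $S^3$, and $\Phi=0$ defining the node) involve transversally different evaluation data, I expect the combined differential to remain surjective, so the node locus is a smooth codimension-$4$ submanifold of the incidence variety.

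\noindent\emph{Step 3 (dimension count and the ``exactly one'' condition).} The projection from the incidence variety to the curve forgets $(z_1,z_2)$, whose pair is discrete (generically finite) for a fixed curve; so the node condition is codimension $4 - 2 = 2$ on $\mathcal{M}$? — I would recompute carefully: the incidence space adds the complex pair $(z_1,z_2)$, i.e.\ $2$ complex $=$ effectively the right count so that after projection the singular stratum has codimension $1$ in $\mathcal{M}$, giving dimension $21-1 = 20$. Finally, ``exactly one'' double point is an open condition cutting out an open subset of the one-nodal stratum (curves with a second node lie in a lower stratum, closed of higher codimension), and an open subset of a manifold is a manifold of the same dimension.

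\smallskip
\noindent\textbf{Main obstacle.} The delicate point is Step~2: verifying that $d\Phi$ is surjective \emph{after} restricting to directions tangent to $S^3$, rather than merely in the ambient $\mathbb{RP}^{34}$. The $S^3$-condition removes $13$ directions, and one must ensure none of these coincide with the directions needed to deform the node; the resultant/Vandermonde lemmas should settle this, but assembling the combined $(13+4)\times 34$ rank statement — choosing coordinate polynomials $p_{i_1},p_{i_2}$ without shared roots and without evaluation degeneracies at $t_1,\dots,t_{13},z_1,z_2$ simultaneously — is the computation requiring the most care, and a small transversality/general-position argument (as in the previous proof's final paragraph) will likely be needed to rule out coincidental degeneracies.
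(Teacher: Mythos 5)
Your strategy --- an incidence variety cut out by the coincidence condition inside the $21$-dimensional manifold $\mathcal{M}$ of degree-$6$ parametrizations lying in $S^3$ --- is genuinely different from the paper's proof, which never does any transversality computation: the paper uses the quaternion action of $S^3$ on itself to split $D\setminus D'\cong S^3\times D_p$ (curves whose unique double point is the north pole), and then identifies $D_p$, via stereographic projection from the double point, with a space of degree-$4$ curves in $\mathbb{RP}^3$ meeting the plane at infinity in four points, two of them on the empty conic, fibred over the conic by the transformations $T_z$. Your route could in principle work, but as written it has a genuine gap, located exactly at the step you flagged as needing care.

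The claim you set out to prove in Step~2 --- that $d\Phi$ is surjective onto $\mathbb{R}^4$ after restricting to directions tangent to $\mathcal{M}$ --- is false, not merely delicate. Every curve in $\mathcal{M}$ lies on $S^3$, so along the incidence variety one has the identities $q(k(z_1))=q(k(z_2))=0$, where $q$ is the quadratic form defining the sphere. Differentiating these identities at a node $x=k(z_1)=k(z_2)$ and subtracting shows that $dq_x\cdot d\Phi(\delta k,\delta z_1,\delta z_2)=0$ for every admissible variation; hence the image of $d\Phi$ is contained in the $3$-dimensional subspace $\ker dq_x=T_xS^3$ and can never be all of $\mathbb{R}^4$. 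This is precisely why your naive count ``$4$ conditions $-$ $2$ parameters $=$ codimension $2$'' threatens to give dimension $19$, and the resolution you offer --- that $(z_1,z_2)$ is a ``complex pair'' supplying extra parameters --- is incorrect: for a crossing the two preimages are real, and for a solitary double point they are complex conjugates of one another, so in both cases they contribute only $2$ real parameters. The correct repair is that coincidence of two points both constrained to a $3$-dimensional quadric imposes only $3$ independent conditions: one should prove that $d\Phi$ surjects onto $T_xS^3$, so that the incidence variety (of dimension $21+2=23$) drops by $3$ to dimension $20$, and then check that projection to $\mathcal{M}$ is an embedding on the locus of curves with exactly one double point. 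With that correction the argument plausibly closes (Lemmas~\ref{trivial} and~\ref{resultant} should give the needed surjectivity onto the smaller target), but note that it must then be run separately for the two real types of double point, since the reality constraints differ ($z_1,z_2\in\mathbb{RP}^1$ versus $z_2=\bar{z}_1$, where the condition becomes ``$k(z_1)$ is real''); your proposal does not distinguish them, and as stated its central transversality claim would fail and the dimension $20$ is not actually established.
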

\begin{proof}
 Denote the space of rational curves with at least one double point as $D$ and the space of rational curves with more than one double point as $D'$, then 
 \[D\setminus D' \cong S^3 \times D_p\] where $D_p$ denotes the space of curves with exactly one double point at the north pole $p$. This equivalence map can be seen as follows: $S^3$ may be treated as the unit quarternions with the north pole $p$ as the identity quarternion. The quarternions act transitively on $S^3$ and extend linearly to $\mathbb{R}^4$. The map is then defined by sending a curve $k$ with one double point at $q$ to the pair $(q,k_0)$ where $k_0$ is the action of $A_q$ on $k$, where $A_q$ is the quarternion that takes $q$ to $p$. 

 So it is enough to show that $D_p$ is a manifold. Via the stereographic projection, this space is isomorphic to the space of knots of degree~$4$ in $\mathbb{RP}^3$ that intersects the plane at infinity in four points, two of which lie on the empty conic. The curves that intersect the plane at infinity in the points $[0:0:1:i]$ and $[0:0:1:-i]$ is a manifold of dimension $17$, which will be denoted by $K_i$. Fix a real point $\lambda_1$ on the plane at infinity and one point $\lambda_2$ outside it. Given any conjugate pair $z$ and $\bar{z}$ on the empty conic, there is unique transformation $T_z$ which fixes $\lambda_1$ and $\lambda_2$ and takes $z\to i$ and $\bar{z}\to -i$. For an open neighbourhood $U$ around $z$ in the empty conic, the map and a neigbhbourhood $V$ around a chosen knot, $T_z$ gives an isomorphism \[V\cong U\times K_i\] that is defined by taking a knot $k$ which intersects the empty conic in $z$ and $\bar{z}$ to $(z,T_z(k))$.
\end{proof}
  \subsubsection{Computing a bound on the number of components}

Let $X$ denote the space of parametrizations of real rational knots of degree $6$ in $S^3$. Its dimension is $21$. Denote by $D$, the co-dimension $1$ subspace of parametrizations which have at least one singularity.  

Since $X$ is a manifold, by Poincare duality:
\[\mathrm{H}^0(X\setminus D;\mathbb{Z}/2)=\mathrm{H}_{21}^{BM}(X\setminus D;\mathbb{Z}/2)\]
By the long exact sequence of Borel-Moore homology~\cite{borelmoore},

\begin{equation}
  \mathrm{dim\ }\mathrm{H}_{21}^{BM}(X\setminus D;\mathbb{Z}/2)\leq\mathrm{dim\ }\mathrm{H}_{21}^{BM}(X;\mathbb{Z}/2)+\mathrm{dim\ }\mathrm{H}_{20}^{BM}(D;\mathbb{Z}/2)
  \label{eqn1}
\end{equation}

$\mathrm{dim\ }\mathrm{H}_{21}^{BM}(X;\mathbb{Z}/2)= 1$ since $X$ is a connected manifold of dimension $21$ any manifold is $\mathbb{Z}/2$-orientable. Consider the space of curves which have only one double point or a cusp as a singularity. Denote the complement of this space in the space of curves with singularities to be $D'$; it is of co-dimension $2$. Then

  \begin{equation}
    \mathrm{dim\ }\mathrm{H}_{20}^{BM}(D;\mathbb{Z}/2)\leq\mathrm{dim\ }\mathrm{H}_{20}^{BM}(D';\mathbb{Z}/2)+\mathrm{dim\ }\mathrm{H}_{20}^{BM}(D\setminus D';\mathbb{Z}/2)
  \label{eqn2}
  \end{equation}

  Again by duality, since $D\setminus D'$ was shown to be a manifold by lemma~\ref{mansing}, \[\mathrm{H}_{20}^{BM}(D\setminus D';\mathbb{Z}/2)=H^0(D\setminus D';\mathbb{Z}/2)\] which we know has rank $4$, and $D'$ has co-dimension $2$ so $\mathrm{dim\ }\mathrm{H}_{20}^{BM}(D';\mathbb{Z}/2)=0$. Therefore, by the inequality~\ref{eqn1},
  \[H^0(X\setminus D;\mathbb{Z}/2)\leq1+4=5\]

We will now try and obtain representatives for these five components. 
\begin{figure}[t]
	\begin{center}
		\includegraphics[height=3cm]{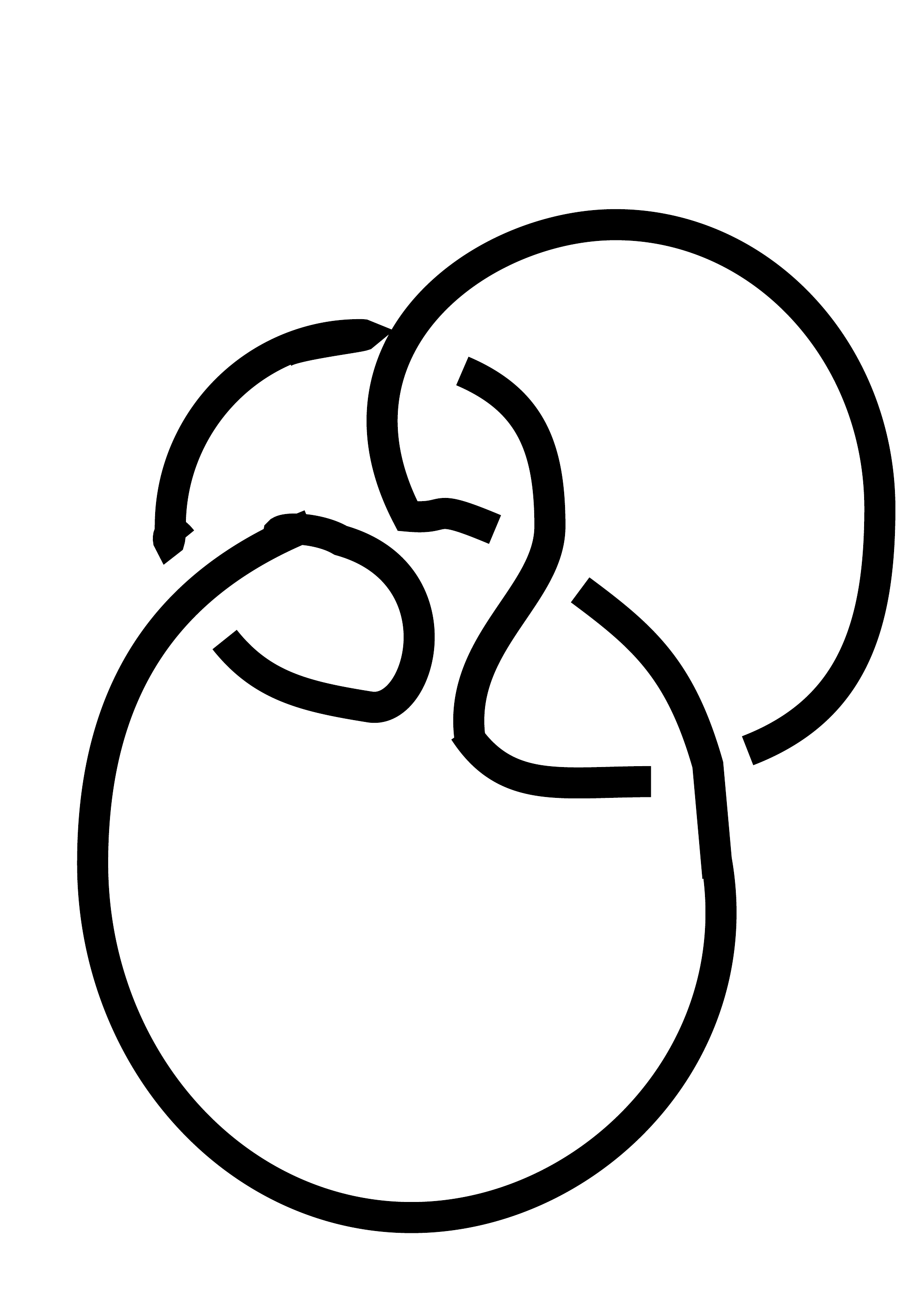}
		\includegraphics[height=3cm]{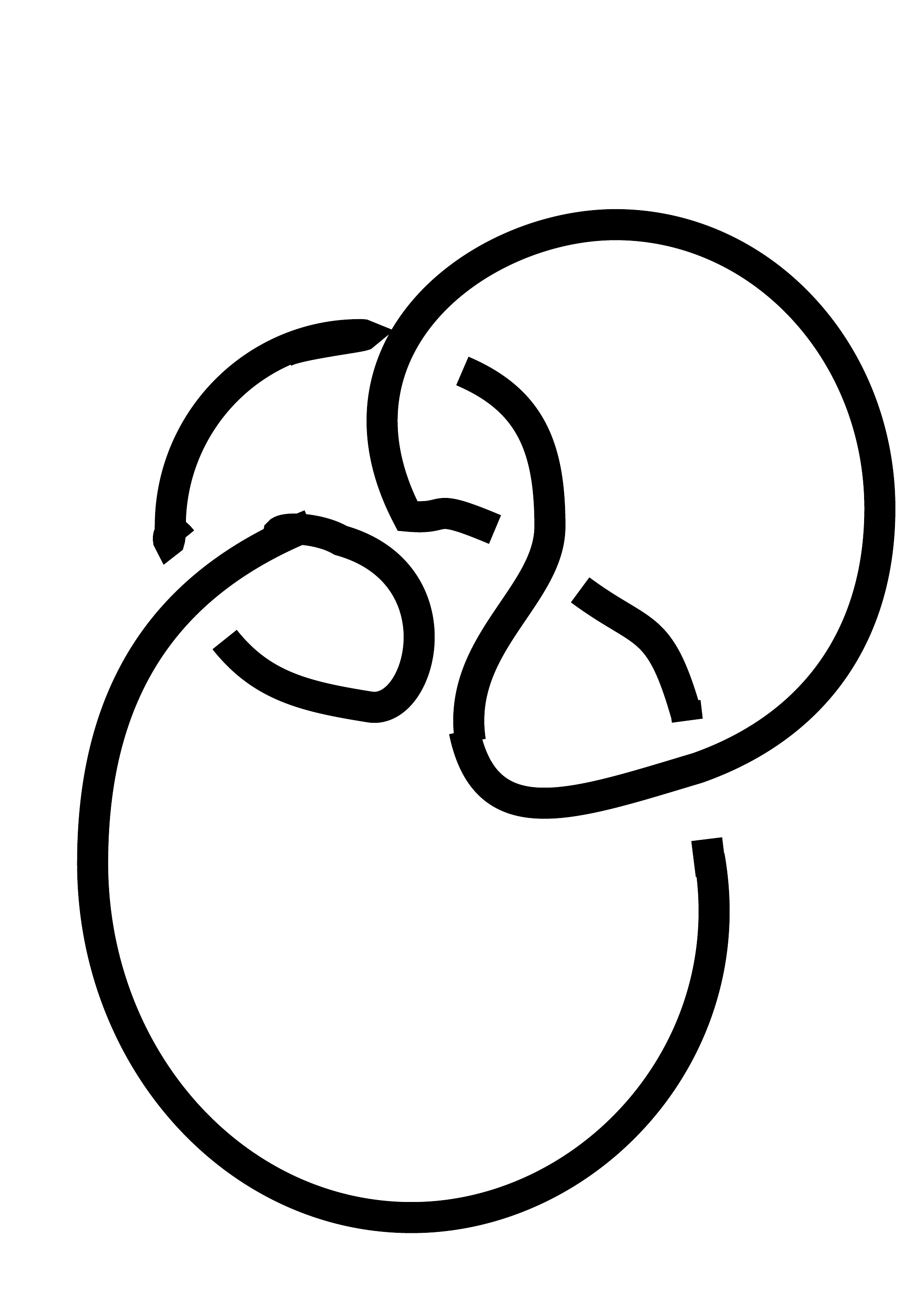}
		\includegraphics[height=3cm]{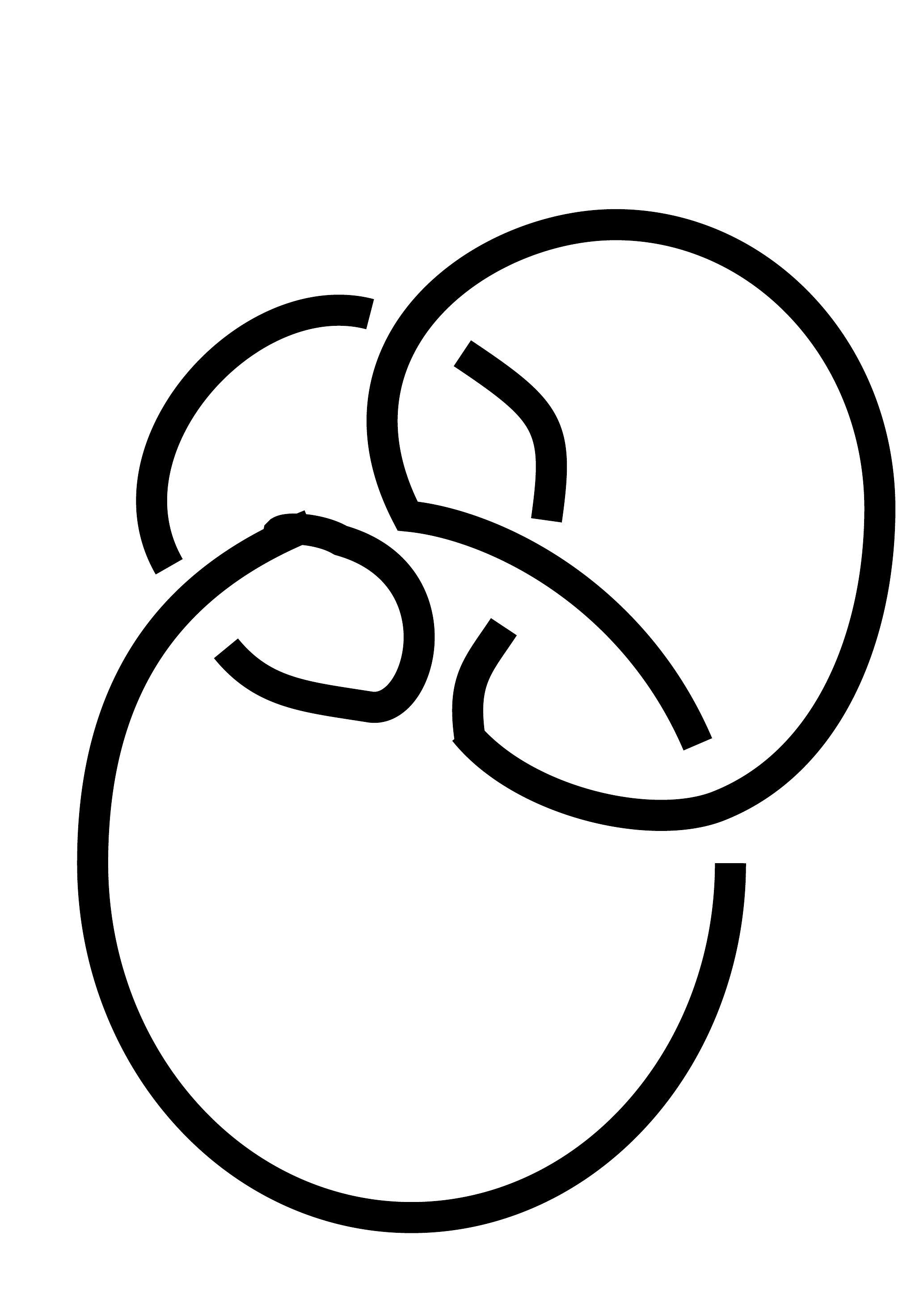}
		\includegraphics[height=3cm]{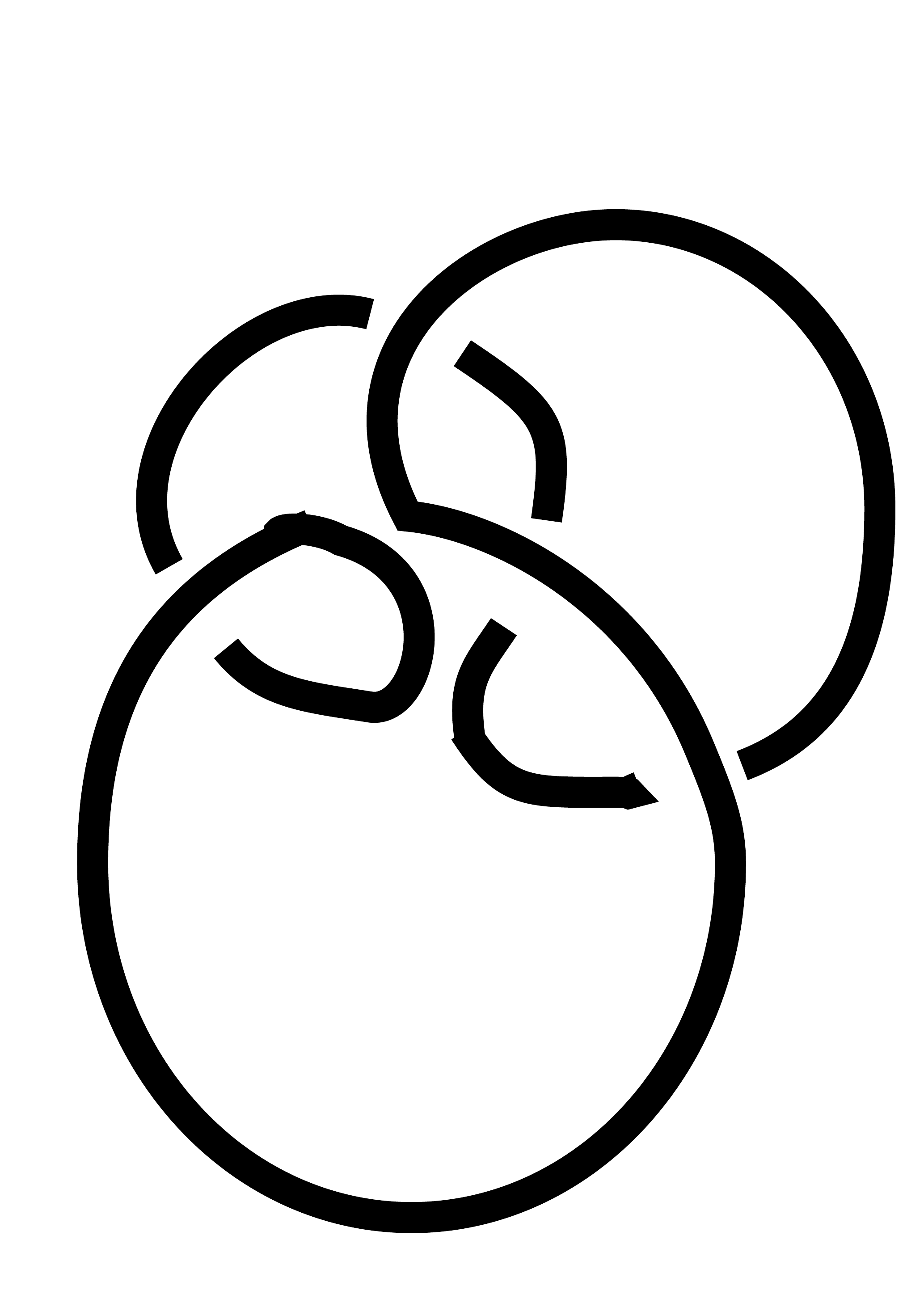}
	\caption{Knots of degree $6$ writhes 4, 2, 0, and 2. The first is topologically isotopic to the trefoil and the other two are topologically isotopic to the unknot. The successive perturbations are formed by a vertical rotation of the innermost circle.}
	\end{center}
	\label{fig:alldeg6}
\end{figure}

The first three knots shown in figure~\ref{fig:alldeg6} are not isotopic to each other because they have distinct writhe numbers: $4$, $2$, and $0$. There is a wall separating the knots with writhes $4$ and $2$ and this is the wall obtained by the pull back in the sphere of the two crossing knot of degree~$4$ in $\mathbb{RP}^3$.

Observe that the trefoil $T^+$ and its mirror image $T^-$ lie in separate components and therefore account for two of the rigid isotopy classes. They can be obtained by perturbing the pullback of the two-crossing knot. 

Now consider the pullback of the unknot. Perturbing it will lead to knots of different writhes and therefore one of them, denoted by $U^+_2$, has got to have a non-zero writhe. Its mirror image, denoted by $U^-_2$, would have a negative writhe number and therefore will lie in a separate component. Together, they account for two more components. 

Since we need to account for only one more component, the other knot formed by a pullback of the unknot, and denoted by $U_0$, is isotopic to its mirror reflection. It must have writhe number $0$ and therefore $U^+_2$, and $U^-_2$ must have writhes $+2$ and $-2$ respectively. 

\begin{enumerate}
  \item A knot of writhe number $2$ which is topologically isotopic to the trefoil (and its mirror image)
  \item A knot of writhe number $2$ which is topologically isotopic to the unknot (and its mirror image)
  \item A knot of writhe number $0$ which is topologically isotopic to the unknot.  It is rigidly isotopic to its mirror image.
\end{enumerate}
and that the two walls divide the knots of different writhes. 

\section{Degree 8}
\label{sec:deg8}
For real rational knots of degree~8 in the sphere, the rigid isotopy classification is more difficult. However, it is possible to connection between the isotopy classes of degree~8 real rational knots in $S^3$ with exactly one double point, and degree~6 real rational knots in $\mathbb{RP}^3$.

\begin{theorem}
    Any pair $(C,X)\in \mathcal{P}_{6,0}\cup\mathcal{P}_{6,2}$ is rigidly isotopic to a pair $(C',X,)\in\mathcal{P}'_{6,0}\cup\mathcal{P}'_{6,2}$ which can be lifted to the sphere. 
\end{theorem}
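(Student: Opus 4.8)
The plan is to realize the required isotopy by a single path of real projective transformations of $\mathbb{RP}^3$ that fix the plane $X$ and slide the relevant imaginary intersection points onto the standard empty conic, exactly as in the degree~$4$ situation of theorem~\ref{retract}; the only new feature is that now \emph{two} conjugate pairs, rather than one, must be placed on the signature~$0$ conic. Recall from the proof of theorem~\ref{maincorrespondence} that a pair $(C,X)$ with $C$ of degree~$6$ lifts to a degree~$8$ knot in $S^3$ with one double point precisely when the four points of $\mathbb{C}C\cap\mathbb{C}X$ coming from the cone lie on the empty conic $Q_0\subset\mathbb{C}X$ which is the projection of $\mathbb{C}T_p\cap\mathbb{C}S^3$. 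After an arbitrarily small preliminary perturbation of $C$ inside $\mathcal{P}_{6,m}$ I may assume $\mathbb{C}C\cap\mathbb{C}X$ consists of six distinct points: two conjugate pairs together with two real points when $m=2$, and three conjugate pairs when $m=0$. In both cases I must move two of the conjugate pairs onto $Q_0$, while the remaining two points (a conjugate pair when $m=0$, the two real points when $m=2$) are free and become the branch directions at the double point of the lift.

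The key observation is that a real projective transformation $T$ of $\mathbb{RP}^3$ preserving $X$ carries $C$ to another non-singular rational curve of degree~$6$ and satisfies $T(C)\cap X=T(C\cap X)$, so it preserves the number of real points of $\mathbb{R}C\cap\mathbb{R}X$ and simply moves the six complex intersection points by $T$. Hence for any path $T_t$ with $T_0=\mathrm{id}$ lying in the subgroup $G_X\subset\mathbb{P}GL_4(\mathbb{R})$ of transformations fixing $X$ setwise, the family $(T_t(C),X)$ is automatically a path in $\mathcal{P}_{6,m}$. Thus it suffices to produce a single $T_1\in G_X$ whose restriction to $X$ sends the two chosen conjugate pairs onto $Q_0$, together with a path in $G_X$ joining it to the identity.

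To build $T_1$ I would argue by transitivity on $X\cong\mathbb{RP}^2$. Four points forming two conjugate pairs in general position constitute a projective frame, so any conjugation-respecting complex projective map sending them to a standard such frame is forced to be real; consequently $\mathbb{P}GL_3(\mathbb{R})$ acts transitively on general-position configurations of two conjugate pairs. Taking as target the configuration $[1:i:0],[1:-i:0],[0:1:i],[0:1:-i]$, which is in general position and lies on the empty conic $x_1^2+x_2^2+x_3^2=0$, produces a real $g\in\mathbb{P}GL_3(\mathbb{R})$ carrying our two pairs onto $Q_0$. Since $\mathbb{P}GL_3(\mathbb{R})$ is connected, $g$ is joined to the identity by a path $g_t$; lifting $g_t$ to a path $B_t$ in $GL_3(\mathbb{R})$ with $B_0=I$ and, in coordinates with $X=\{x_0=0\}$, setting $T_t=\left(\begin{smallmatrix}1&0\\0&B_t\end{smallmatrix}\right)$, gives the desired path in $G_X$ with $T_1|_X=g$. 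Applying $T_t$ to $C$ then yields a rigid isotopy of pairs ending at $(C',X)=(T_1(C),X)\in\mathcal{P}'_{6,m}$, which lifts to the sphere by theorem~\ref{maincorrespondence}.

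The main obstacle is the general-position hypothesis underlying the transitivity step. Two conjugate pairs fail to be in general position exactly when all four of their points are collinear, that is, when a single real line of $X$ already meets $\mathbb{C}C$ in four points. When $m=0$ this is circumvented by selecting a different two of the three available conjugate pairs; when $m=2$ there is no such freedom, and one must instead justify a preliminary perturbation of $C$ within $\mathcal{P}_{6,2}$ that makes the four imaginary intersection points non-collinear while keeping the two real intersection points real, distinct and transverse. Verifying that this perturbation can always be performed without altering the real count---so that the reduction to the general-position case is legitimate---is the delicate point, and is where I expect the real work of the proof to lie.
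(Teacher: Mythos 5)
Your proposal is correct in substance, but it reaches the key step by a genuinely different route than the paper. The paper never moves the four imaginary points at all: it observes that two conjugate pairs in general position are the base of a unique real pencil of conics and cites the classification of real pencils of conics~\cite{degtyarev1996quadratic} to conclude that such a pencil always contains a conic of signature~$0$ --- so the four points \emph{already} lie on an empty conic, and a single linear transformation then carries that conic to the standard one. You instead prove the needed projective equivalence directly: two conjugate pairs in general position form a projective frame, the unique complex transformation matching two conjugation-invariant frames equals its own conjugate and is therefore real, so $\mathbb{P}GL_3(\mathbb{R})$ acts transitively on such configurations, and you can land the four points on the standard empty conic, extending to $\mathbb{RP}^3$ by a block matrix fixing $X$ and using connectedness of $\mathbb{P}GL_3(\mathbb{R})$. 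In effect you give an elementary, self-contained proof of exactly the fragment of the pencil classification that the paper invokes; you are also more explicit than the paper about why this produces a rigid isotopy of \emph{pairs} (the transformations fix $X$ and are joined to the identity), a point the paper compresses into ``by a linear transformation.''

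Concerning the collinear degeneration that you defer as ``the real work'': this is not a weakness of your route relative to the paper's, because the paper tacitly assumes the same general position --- four collinear points are not the base of a pencil (the conics through them form a net of line-pairs) and lie on no smooth conic whatsoever, so the paper's conclusion literally fails there and a preliminary perturbation is unavoidable on either route. Moreover the fix is routine. For $m=0$: if two of the three choices of conjugate-pair couples were collinear, all six intersection points would lie on one real line, forcing $C$ into a plane through that line (projection from a $6$-secant line is constant on a sextic), which is impossible for a nonsingular rational sextic; hence some choice is in general position. For $m=2$: by the trisecant lemma the quadrisecant lines of the nonplanar curve $C$ form a family of dimension at most one, so planes containing one form a family of dimension at most two in the three-dimensional space of planes; after your preliminary perturbation making the six intersection points distinct (hence the two real ones transverse), an arbitrarily small generic move of $X$ avoids all such planes while keeping two real and four imaginary intersections, i.e.\ it stays in $\mathcal{P}_{6,2}$ and removes the collinearity.
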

\begin{proof}
  $C$ intersects $X$ in at least four non-real points. The four points define a pencil of conics. According to the classification of real pencil of conics in the plane~\cite{degtyarev1996quadratic}, a base of four distinct non-real points (in conjugate pairs) defines a unique real pencil of conics. We know that there is a pencil of conics containing the empty conic which has four distinct non-real points as a base. Therefore we can find an empty conic containing the four non-real points. By a linear transformation, one can transform this conic to a standard empty conic. 
\end{proof}
\begin{corollary}
If a degree~6 real rational knot in $\mathbb{RP}^3$ intersects a plane in a maximum of two real points, it is rigidly isotopic to the stereographic projection of a real rational knot of degree~8 in the 3-sphere with one double point. 
\end{corollary}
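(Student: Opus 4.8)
The plan is to read this corollary off directly from the preceding theorem together with Theorem~\ref{maincorrespondence} specialized to $d=8$; the substantive geometry (the pencil-of-conics argument) has already been carried out in the theorem above, so the corollary should follow by assembling the pieces.

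First I would set up the relevant pair. Let $C$ be the given degree~$6$ real rational knot in $\mathbb{RP}^3$ and let $X$ be a plane meeting $\mathbb{R}C$ in at most two real points. Since $\mathbb{C}C\cap\mathbb{C}X$ consists of six points counted with multiplicity and the non-real intersection points occur in conjugate pairs, the number of real intersection points is even; the hypothesis then forces it to be $0$ or $2$. Hence $(C,X)\in\mathcal{P}_{6,0}\cup\mathcal{P}_{6,2}$.

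Next I would invoke the preceding theorem, which furnishes a rigid isotopy of \emph{pairs} carrying $(C,X)$ to a pair $(C',X)\in\mathcal{P}'_{6,0}\cup\mathcal{P}'_{6,2}$ that can be lifted to the sphere. Forgetting the plane throughout this isotopy yields a rigid isotopy of knots from $C$ to $C'$, so it suffices to realise $C'$ as a stereographic projection of the required type. I would then apply Theorem~\ref{maincorrespondence} with $d=8$: the space of degree~$8$ knots in $S^3$ with a single double point is a double covering of $\mathcal{P}'_{6,k}$, where $k=0$ for a solitary double point and $k=2$ otherwise, the covering map sending a knot $C''$ with double point $p$ to the pair $(\pi_p(C''),X_p)$. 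Thus $(C',X)$ lies in the base of this covering and admits a lift to a degree~$8$ knot $\tilde{C}\subset S^3$ whose only singularity is one double point $p$ and whose projection satisfies $\pi_p(\tilde{C})=C'$. Composing, $C$ is rigidly isotopic to $C'=\pi_p(\tilde{C})$, the stereographic projection of the degree~$8$ knot $\tilde{C}$ with one double point, which is exactly the claim.

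The only point requiring genuine care—and the natural place for an error to hide—is matching the two ``lift to the sphere'' statements: one must verify that the liftability asserted in the preceding theorem is literally the existence of a point in a fibre of the double covering of Theorem~\ref{maincorrespondence}, so that the resulting degree~$8$ curve has \emph{exactly} one double point and is otherwise non-singular (a genuine ``knot with one double point''), rather than acquiring extra singularities under the lift. The dichotomy between the solitary case ($k=0$) and the non-solitary case ($k=2$) should be tracked through, since it determines whether the ambient pair sits in $\mathcal{P}'_{6,0}$ or $\mathcal{P}'_{6,2}$; but in both cases the same composition of the two theorems applies verbatim.
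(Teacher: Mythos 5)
Your proposal is correct and follows essentially the same route as the paper: the paper leaves this corollary as an immediate consequence of the preceding theorem (rigid isotopy of the pair into $\mathcal{P}'_{6,0}\cup\mathcal{P}'_{6,2}$) combined with Theorem~\ref{maincorrespondence} at $d=8$, which is exactly the assembly you carry out. Your explicit attention to the parity of real intersection points and to matching ``liftable to the sphere'' with membership in the base of the double covering only makes explicit what the paper treats as evident.
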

\section{Constructing examples of knots in $S^3$}
Before demonstrating why Bj{\"o}rklund's~\cite{bjorklund} method of constructing examples applies to curves in the sphere, we note that following simple lemma which proves the existence of torus knots in rational curves of each degree $d$.
\begin{lemma}
  There exist $(m,d/2)$-torus knots in $S^3$ of degree~$d$ for each $m$ which is coprime to $d/2$.
\end{lemma}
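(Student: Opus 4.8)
The plan is to place the torus knot on the Clifford torus of $S^3$ and make it rational using the classical rational parametrization of the circle. Write $n=d/2$ and fix $m$ coprime to $n$ with $1\le m\le n-1$ (the value $m=1$ gives the unknot). Identify $\mathbb{R}^4$ with $\mathbb{C}^2$ and recall that the $(m,n)$-torus knot is the embedded circle $\theta\mapsto(e^{im\theta},e^{in\theta})$ on the Clifford torus $\{|z_1|=|z_2|\}\cap S^3$. Since $e^{i\theta}=(s+it)^2/(s^2+t^2)$, the functions $\cos k\theta$ and $\sin k\theta$ become the real and imaginary parts of $(s+it)^{2k}/(s^2+t^2)^k$, so after clearing denominators by the common factor $(s^2+t^2)^n$ one obtains the homogeneous parametrization
\[ k\colon[s:t]\longmapsto\bigl[\sqrt2\,(s^2+t^2)^n:\ \mathrm{Re}(s+it)^{2m}(s^2+t^2)^{n-m}:\ \mathrm{Im}(s+it)^{2m}(s^2+t^2)^{n-m}:\ \mathrm{Re}(s+it)^{2n}:\ \mathrm{Im}(s+it)^{2n}\bigr] \]
of degree $2n=d$. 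First I would verify the two routine points: all five polynomials have no common (real or complex) factor, so the map has degree exactly $d$; and the image lies in $S^3$, since $x_1^2+x_2^2=(s^2+t^2)^{2n}=x_3^2+x_4^2$ while $x_0^2=2(s^2+t^2)^{2n}$, giving $x_1^2+x_2^2+x_3^2+x_4^2=x_0^2$.

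Next I would identify the knot type and examine non-singularity. On the real locus the map is by construction a reparametrization of the standard $(m,n)$-torus knot, which is embedded precisely because $\gcd(m,n)=1$; hence $\mathbb{R}C$ is a smoothly embedded torus knot. Passing to the complexification, put $z=s+it$ and $w=s-it$, so that in the coordinates $(x_0,\,x_1+ix_2,\,x_1-ix_2,\,x_3+ix_4,\,x_3-ix_4)$ the complexified map is $[z:w]\mapsto[\sqrt2\,(zw)^n:z^{n+m}w^{n-m}:z^{n-m}w^{n+m}:z^{2n}:w^{2n}]$. Forming ratios of coordinates recovers $(z/w)^m$ and $(z/w)^n$ from the image, and since $\gcd(m,n)=1$ this recovers $z/w$; together with the behaviour at $z=0$ and $w=0$ (which map to the two distinct points with only the last, resp.\ fourth, coordinate nonzero) this shows the complexified map is injective, so $\mathbb{C}C$ has no nodes.

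The delicate point, and the step I expect to be the main obstacle, is that the complexified map need not be an \emph{immersion} at the two complex ``circular points'' $[1:\pm i]$, i.e.\ $z=0$ and $w=0$: there the lowest-order coordinate vanishes to order $n-m$, so when $m$ and $n$ are not adjacent the complexification acquires a cusp and $C$ fails to be a real rational knot in the required sense. I would remove these complex singularities by a small perturbation. By the manifold theorem and the remark that it generalizes to all degrees, the degree-$d$ real parametrizations landing in $S^3$ form a smooth manifold, inside which those with singular complexification form a proper subset of positive codimension. A generic arbitrarily small perturbation of $k$ within this manifold therefore has smooth, node-free complexification, still has degree $d$ (no common factor appears for small perturbations), and still lies on $S^3$; being a small perturbation of the embedded real torus knot $\mathbb{R}C$, its real locus stays embedded and isotopic to the $(m,n)$-torus knot. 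This yields the desired non-singular degree-$d$ real rational $(m,d/2)$-torus knot.
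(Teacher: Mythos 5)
Your first paragraph is, in essence, the paper's entire proof: the paper also places the knot on the Clifford torus, writes $t\mapsto(e^{mit},e^{dit/2})$, and asserts in one line that this is a rational immersion of degree $d$ whose real locus is embedded exactly when $\gcd(m,d/2)=1$. Everything you do after that goes beyond the paper, and your ``delicate point'' is not a false alarm --- it is a genuine gap in the paper's own argument. Writing $n=d/2$, $z=s+it$, $w=s-it$ and passing to the coordinates $(x_0,\,x_1+ix_2,\,x_1-ix_2,\,x_3+ix_4,\,x_3-ix_4)$, the complexification is
\[
[z:w]\longmapsto\bigl[\sqrt2\,(zw)^n:z^{n+m}w^{n-m}:z^{n-m}w^{n+m}:z^{2n}:w^{2n}\bigr],
\]
and near $w=0$ (the imaginary point $[1:-i]$, in the chart where the fourth coordinate is $1$) it reads $(\sqrt2\,w^n,\,w^{n-m},\,w^{n+m},\,w^{2n})$, whose differential vanishes precisely when $n-m\ge 2$. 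So, e.g., the $(1,3)$ curve of degree $6$ has a pair of conjugate complex cusps and is \emph{not} a real rational knot under the paper's definition (``no singular points, neither real nor complex''); the paper's word ``immersion'' is only true of the real locus. When $n-m=1$ --- which includes the $(2,3)$ trefoil, the only case the paper actually uses downstream --- your verification (immersion everywhere, plus injectivity by recovering $z/w$ from $(z/w)^m$ and $(z/w)^n$) is complete, and on those cases your proof is strictly better than the paper's.

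The gap in your proposal is the repair for $n-m\ge 2$. The manifold theorem, and the remark that it generalizes to higher degrees, gives only that the real degree-$d$ parametrizations landing in $S^3$ form a smooth manifold; it says nothing about the locus of parametrizations with singular complexification, so your key claim --- that this locus has positive codimension, equivalently that the cusped curve is a limit of non-singular curves on $S^3$ --- is precisely what must be proved and is not supplied. (The paper's own codimension assertions concern nodal walls in degree $6$ only; none of them cover conjugate pairs of complex cusps in arbitrary degree, and they are what a careful version of your argument would need.) Two ways to close it: either a transversality/dimension count for the cusp condition among real rational curves on the quadric threefold --- using, say, that the complex quadric is homogeneous, so deformations of curves on it are unobstructed --- or an explicit perturbation, which here is quite feasible: replace $z_1=\tfrac1{\sqrt2}e^{im\theta}$ by $\alpha e^{im\theta}+\beta e^{i(n-1)\theta}$ and compensate $z_2$ by a Fej\'er--Riesz factorization of $1-|z_1|^2$, namely $z_2=c\,e^{in\theta}\bigl(1-\rho e^{-i(n-1-m)\theta}\bigr)$ with $c^2(1+\rho^2)=1-\alpha^2-\beta^2$ and $c^2\rho=\alpha\beta$. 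For small $\beta\neq0$ this curve still lies on $S^3$, still has degree $2n$, acquires a first-order term $\beta z^{2n-1}w$ in the second coordinate so the complexification becomes an immersion at $[1:\pm i]$, and remains injective and real-embedded; this yields the missing cases honestly. As written, though, the ``positive codimension'' step is an unproved assertion --- the same hole the paper has, except that you at least noticed it.
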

\begin{proof}
  Treat the real 3-sphere as a subset of $\mathbb{C}^2$ defined by $|z_1|^2+|z_2|^2=1$. It is easy to see the that curve defined by $t\to (e^{mit},e^{dit/2})$ (where $m<d/2$) is a rational immersion of degree~$d$. It is an embedding if and only if $(m,d/2)$ are co-prime.
\end{proof}
\label{examples}
We will use the following restatement of the theorem proved in~\cite{bjorklund}: 

\begin{theorem}[Bj{\"o}rklund]
  Consider two rational knots $C_1$ and $C_2$ of degrees $m$ and $n$ in $\mathbb{RP}^3$ that intersect in only one point $p$ with linearly independent tangent lines at $p$. There exist parametrizations $[p_0:p_1:p_2:p_3]$ and $[q_0:q_1:q_2:q_3]$ of $C_1$ and $C_2$ respectively, so that $[p_0q_0:p_1q_0+q_1p_0:p_2q_0+q_2p_0:p_3q_0+q_3p_0]$ defines a parametrization of a knot of degree $m+n$ which is isotopic to a small perturbation of the union of $C_1$ and $C_2$ at $p$.
\end{theorem}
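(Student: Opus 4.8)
The plan is to interpret the combining formula affinely and then exploit the freedom to reparametrize the source $\mathbb{RP}^1$ of $C_2$ by a projective scaling, which turns the formula into a one-parameter degeneration of $K$ onto the union $C_1\cup C_2$. Choosing coordinates so that $p=[1:0:0:0]$, write the affine coordinates of $C_1$ as $f=(f_1,f_2,f_3)$ with $f_i=p_i/p_0$ and of $C_2$ as $g=(g_1,g_2,g_3)$ with $g_i=q_i/q_0$. On the chart where the first coordinate is nonzero the formula reads $K=f+g$, the pointwise sum of the two affine parametrizations; this is the key simplification, and it already shows $\deg K=m+n$ once one checks that the four homogeneous coordinates share no common root (a root of $p_0$ would force a coincident point at infinity of $C_1$ and $C_2$, which the hypothesis $C_1\cap C_2=\{p\}$ together with a generic choice of coordinates excludes).

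Next I would normalize the parametrizations: arrange that $C_1$ passes through $p$ at the source parameter $w=0$ (so $f(0)=0$ with tangent $\tau_1=f'(0)$) and that $C_2$ passes through $p$ at $w=\infty$ (so $g(\infty)=0$ with tangent $\tau_2$). Reparametrizing $C_2$ by the projective scaling $w\mapsto w/\epsilon$ replaces $g$ by $g_\epsilon(w)=g(w/\epsilon)$ and produces, through the same formula, a degree-$(m+n)$ curve $K_\epsilon=f+g(\cdot/\epsilon)$. Since this reparametrization is exactly the freedom the theorem grants, it suffices to prove the assertion for one sufficiently small $\epsilon$. I then analyze $K_\epsilon$ in two regimes: on the \emph{outer} region $|w|\gtrsim\delta$ one has $w/\epsilon\to\infty$, hence $g(w/\epsilon)\to g(\infty)=0$ and $K_\epsilon\to f$, tracing $C_1$; on the \emph{inner} region $w=\epsilon u$ with $u$ of order one, $f(\epsilon u)\to f(0)=0$ and $K_\epsilon(\epsilon u)\to g(u)$, tracing $C_2$. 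Thus the image of $K_\epsilon$ converges in the Hausdorff sense to $C_1\cup C_2$, glued at $p$, so $K_\epsilon$ is literally a small perturbation of the union, with the two arcs joined at $p$ along the distinct directions $\tau_1$ and $\tau_2$.

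It remains to show that $K_\epsilon$ is a genuine knot — an immersion with no complex double points — for small $\epsilon$. Immersivity follows from the same two-regime estimate: differentiating shows $K_\epsilon'\to f'\neq0$ on the outer region and, after rescaling, $\to g'\neq0$ on the inner region, using that $C_1$ and $C_2$ are themselves immersed. Absence of double points away from $p$ is immediate, because a point of the outer arc lies near $C_1$, a point of the inner arc lies near $C_2$, and $C_1\cap C_2=\{p\}$; the delicate case is a collision at the junction, where the curve turns from the incoming branch of $C_1$ to the outgoing branch of $C_2$. Here I would argue by compactness over $\mathbb{C}$: were $\mathbb{C}K_{\epsilon_j}$ to have a double point for some sequence $\epsilon_j\to0$, the two colliding source parameters would, after passing to the appropriate inner or outer scale, limit to a genuine double point of $C_1$ or of $C_2$, or to coincident tangent data at $p$; the hypotheses that both curves are nonsingular and that $\tau_1,\tau_2$ are linearly independent rule out every case. \textbf{This limiting argument controlling the junction — and in particular excluding complex, not merely real, double points as $\epsilon\to0$ — is the main obstacle}, since it is exactly where the transversality of the two tangent lines at $p$ is indispensable; the remaining degree count and immersion estimates are routine.
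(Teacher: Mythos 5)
The paper contains no proof of this statement to compare yours against: it is quoted, with citation, as a restatement of a theorem \emph{proved in}~\cite{bjorklund}, and the author uses it as a black box. So your proposal has to be judged against Björklund's original argument, and it is in substance that argument: the combination formula read affinely as the pointwise sum $f+g$ with $p$ at the origin, the scaling $w\mapsto w/\epsilon$ absorbed into the freedom of choosing parametrizations, the two-scale limits (outer region tracing $C_1$, inner region tracing $C_2$), and a compactness argument over $\mathbb{C}$ to exclude singularities for small $\epsilon$, with the transversality at $p$ doing the work at the junction. The junction case does close up the way you expect: writing $f(w)=\tau_1 w+O(w^2)$ and $g(w/\epsilon)=a\epsilon/w+O(\epsilon^2/w^2)$ with $a$ along $\tau_2$, a collision of two junction parameters forces $\tau_1+o(1)=\lambda\bigl(a+o(1)\bigr)$ for some $\lambda\in\mathbb{C}$, which is impossible because real vectors independent over $\mathbb{R}$ remain independent over $\mathbb{C}$. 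So the strategy is sound and completable.

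Three points need repair before this is a proof. First, your justification of the degree count is misstated: a common root of the four homogeneous coordinates forces a common root of $p_0$ and $q_0$, i.e.\ a single \emph{parameter value} sent by both parametrizations to the plane $x_0=0$; this has nothing to do with the curves having a coincident point at infinity, and it is excluded not by a choice of coordinates but automatically for all small $\epsilon$, since the roots of the rescaled $q_0$ tend to $0$ while those of $p_0$ stay away from $0$ (because $p_0(0)\neq 0$, as $f(0)=p$ is affine). Second, your collision analysis needs the hypothesis read for complexifications, $\mathbb{C}C_1\cap\mathbb{C}C_2=\{p\}$: two real knots meeting in one real point can still share conjugate pairs of imaginary points, and in that case your outer--inner case (a parameter near $C_1$ colliding with one near $C_2$) is not excluded by the argument as written. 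This stronger reading is the one actually needed in the ambient paper, whose knots must be free of complex double points. Third, immersivity does \emph{not} follow merely from "$K_\epsilon'\to f'$ outside and $\to g'$ inside": in the intermediate regime ($w\to 0$, $w/\epsilon\to\infty$) neither limit applies, and there $K_\epsilon'(w)\approx \tau_1-a\epsilon/w^2$ is bounded away from zero only because $\tau_1\notin\mathbb{C}a$ --- so transversality is needed for the immersion claim too, not only for excluding double points.
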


The stereographic projection allows us to obtain examples of knots in $S^3$, however we would also want to maintain the right degree. This is possible because of the following observation: that the perturbation does not affect the points at infinity. We therefore obtain the following couterpart for knots in $S^3$.

\begin{theorem}
  \label{construction}
 If there are two knots in the sphere with degrees $m$ and $n$ in $S^3$ which intersect in only one point $p$, then it is possible to construct a knot with a parametrization of degree $m+n$ in the sphere which is isotopic a small perturbation at $p$ of the union of the original knots outside a small neighbourhood of the intersection. 
\end{theorem}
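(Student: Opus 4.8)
The plan is to reduce this statement to the already-stated theorem of Bj{\"o}rklund by pushing the configuration down to $\mathbb{RP}^3$ via stereographic projection, performing the connected-sum/perturbation construction there, and then lifting the result back to $S^3$. First I would choose the point of projection carefully: since $C_1$ and $C_2$ meet only at the single point $p$, I would stereographically project from a \emph{generic} point $q\in S^3$ lying on neither curve and off the relevant tangent planes, so that $\pi_q$ is an isomorphism onto its image near both curves and sends $C_1$, $C_2$ to rational curves $\pi_q(C_1)$, $\pi_q(C_2)$ in $\mathbb{RP}^3$ of the same degrees $m$ and $n$. Because $\pi_q$ is a bijection away from $\mathbb{C}T_q$ (as established in section~\ref{stereographic}), the images still intersect in exactly the one point $\pi_q(p)$, and since the projection is a local diffeomorphism near $p$ it sends the two distinct tangent directions of $C_1,C_2$ at $p$ to linearly independent tangent lines at $\pi_q(p)$. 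This places the pair $(\pi_q(C_1),\pi_q(C_2))$ exactly in the hypothesis of Bj{\"o}rklund's theorem.

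Next I would apply Bj{\"o}rklund's theorem to obtain parametrizations $[p_0:p_1:p_2:p_3]$ and $[q_0:q_1:q_2:q_3]$ whose combination $[p_0q_0:p_1q_0+q_1p_0:p_2q_0+q_2p_0:p_3q_0+q_3p_0]$ parametrizes a degree-$(m+n)$ knot in $\mathbb{RP}^3$ isotopic to a small perturbation of the union $\pi_q(C_1)\cup\pi_q(C_2)$ at $\pi_q(p)$. The crucial point, flagged in the paragraph preceding the statement, is that this perturbation is \emph{supported near $\pi_q(p)$} and does not move the points at infinity: in particular the new curve still meets the plane at infinity in exactly the same points as before, which are the images of the intersections of $\mathbb{C}C_1\cup\mathbb{C}C_2$ with $\mathbb{C}T_q$. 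I would then invoke the correspondence of section~\ref{stereographic} (the bijection between degree-$d$ curves in $S^3$ and degree-$d$ curves in $\mathbb{RP}^3$ whose intersections with the blow-up plane lie on the standard empty conic) to lift the perturbed curve back to $S^3$ as a degree-$(m+n)$ rational knot.

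The step I expect to be the genuine obstacle is verifying that the liftability condition is preserved under the perturbation. To lift a $\mathbb{RP}^3$ curve back to the sphere, its intersection points with $X_q$ must lie on the empty conic determined by the cone $\mathbb{C}_q\cap\mathbb{C}S^3$. For $\pi_q(C_1)$ and $\pi_q(C_2)$ individually these points lie on the conic because they come from genuine curves in $S^3$; the content is that Bj{\"o}rklund's combined polynomial has the \emph{same} leading behavior at infinity as the union, so its intersection with $X_q$ is precisely the union of the two sets of intersection points, each of which already sits on the empty conic. Concretely, the product $q_0 p_0$ structure of the top-degree part means the points at infinity of the combined curve are exactly those of $C_1$ together with those of $C_2$, all of which lie on the one fixed empty conic, so the lift exists. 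Finally I would note that the lifted knot is isotopic, away from a small neighborhood of $p$, to the union of the two original knots in $S^3$, since $\pi_q$ is an isomorphism there and the perturbation was local; the small ambiguity in how the empty-conic condition is realized can be absorbed using theorem~\ref{retract}-style reasoning if needed, but for the construction of examples the direct argument suffices.
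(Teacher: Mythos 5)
Your proposal follows essentially the same route as the paper: project stereographically from a point off both curves, apply Bj{\"o}rklund's theorem to the projected pair in $\mathbb{RP}^3$, and then verify that the combined parametrization $[p_0q_0:p_1q_0+q_1p_0:\cdots]$ meets the plane at infinity exactly in the union of the intersection points of the two projected curves (all on the empty conic), so the result lifts back to a degree-$(m+n)$ knot in $S^3$. Your observation about the ``product $p_0q_0$ structure'' of the first coordinate is precisely the explicit computation the paper carries out at the roots of $p_0$ and $q_0$, so the proposal is correct and matches the paper's proof.
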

\begin{proof}
  Consider the two knots $C_1$ and $C_2$ of degrees $m$ and $n$ in $S^3$ that intersect in only one point $p$ so that their the tangents at the point are linearly independent. Choose a point of projection $N$ that does not lie on either of the two curves and project them to a copy of $\mathbb{RP}^3$. By theorem~\ref{maincorrespondence}, $\pi_N(\mathbb{C}C_1)$ and $\pi_N(\mathbb{C}C_2)$ intersect the blow-up of $N$ in $m$  and $n$ points respectively, that lie on an empty conic.  Choose coordinates so that the blow-up of $N$ is the plane at infinity. 
  
  Now by theorem~\ref{construction}, we can choose parametrizations $[p_0:p_1:p_2:p_3]$ and $[q_0:q_1:q_2:q_3]$ of the curves $\pi_N(C_1)$ and $\pi_N(C_2)$ so that  $[p_0q_0:p_1q_0+q_1p_0:p_2q_0+q_2p_0:p_3q_0+q_3p_0]$  parametrizes a degree~$m+n$ knot $C$  which is isotopic to a small enough perturbation of the union of $\pi_N(C_1)$ and $\pi_N(C_2)$ at $p$. We will now show that $\pi^{-1}_N(C)$ is a knot \emph{of the same degree} $m+n$ inside $S^3$. 
  
  Let $[0:\alpha_1:\alpha_2:\alpha_3]$ be a point of intersection of $\pi_N(\mathbb{C}C_1)$ with the plane at infinity. It must lie on the empty conic. Let its preimage under the parametrization of $C_1$ be $[\delta:\gamma]$. In that case $p_0(\delta,\gamma)=0$.  Denote the image under the parametrization of $C_2$ of $[\delta:\gamma]$ to be the point $[\beta_0:\beta_1:\beta_2:\beta_3]$. In that case, the image of $[\delta:\gamma]$ under the parametrization of $C$ is  $[0\beta_0:\alpha_1\beta_0+q_10:\alpha_2\beta_0+\beta_20:\alpha_3\beta_0+\beta_30]$=$[0:\alpha_1\beta_0:\alpha_2\beta_0:\alpha_3\beta_0]$=$[0:\alpha_1:\alpha_2:\alpha_3]$.  So each of the $m$ intersection points of $\pi_N(\mathbb{C}C_1)$ with the plane at infinity is also the intersection point of $\mathbb{C}C$ with the plane at infinity, with the plane at infinity.  Similarly, each of the $n$ intersection points of $\pi_N(\mathbb{C}C_2)$ with the plane at infinity, is also the intersection point of $\mathbb{C}C$ with the plane at infinity. Therefore, each of the $m+n$ intersection points of $\mathbb{C}C$ with the plane of infinity must lie on the same empty conic. By theorem~\ref{construction} $\pi^{-1}_N(C)$ is a knot of degree $m+n$ inside $S^3$
\end{proof}

The following lemma will be useful in constructing many examples:
\begin{lemma}
  A degree $2$ curve $C$ in $S^3$ projects under the stereographic projection $\pi_p$ to a degree~$2$ curve $\pi_2(C)$ in $\mathbb{RP}^3$ if and only if $\pi_2(C)$ is a circle.
\end{lemma}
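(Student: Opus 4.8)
The plan is to reduce everything to the behaviour of $\pi_p(C)$ at the plane at infinity $X_p$, using the characterisation that a planar conic in $\mathbb{RP}^3$ is a \emph{circle} precisely when its two complex points at infinity lie on the empty conic which is the projection of the tangent cone $\mathbb{C}T_p\cap\mathbb{C}S^3$. First I would recall from the degree~$2$ analysis that $C$ is a planar conic: it lies on a $2$-plane $\Pi\subset\mathbb{RP}^4$, and since $C$ is non-singular, $\mathbb{C}C=\Pi\cap\mathbb{C}S^3$ as smooth conics.

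Next I would determine exactly when the projection has degree~$2$. Because $p\in S^3$, we have $p\in\Pi$ if and only if $p\in\Pi\cap S^3=C$. If $p\in C$, then projecting $\Pi$ from the point $p$ lying on it collapses $\Pi$ onto a line in $\mathbb{RP}^3$, so $\pi_p(C)$ is contained in a line and has degree~$1$. If $p\notin C$, then $p\notin\Pi$, so $\pi_p(\Pi)$ is a genuine $2$-plane $Y\subset\mathbb{RP}^3$ in which $\pi_p(C)$ is a conic of degree~$2$. Hence $\pi_p(C)$ has degree~$2$ exactly when $p\notin C$.

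For the forward (``only if'') direction I would assume $\pi_p(C)$ has degree~$2$, so $p\notin C$, and invoke the general projection computation preceding Theorem~\ref{maincorrespondence}: the closure of $\pi_p(\mathbb{C}C\setminus\mathbb{C}T_p)$ meets $\mathbb{C}X_p$ in $d=2$ points, all of which lie on the empty conic that is the projection of $\mathbb{C}T_p\cap\mathbb{C}S^3$. Thus the two points at infinity of the conic $\pi_p(C)$ lie on the empty conic, which is exactly the condition for it to be a circle. The backward (``if'') direction is immediate: a circle is by definition a planar conic and therefore has degree~$2$ (equivalently, the degree~$1$ case $p\in C$ produces a line, which is not a circle).

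The only genuinely delicate point is the forward direction, and it is entirely supplied by the intersection-with-infinity computation already carried out in the excerpt; the remainder is bookkeeping about the image of the plane $\Pi$ and the degree drop when $p$ lies on $C$. The one thing to be careful about is to fix the notion of circle through the standard empty conic, so that ``lies on a section of the sphere'' and ``the two points at infinity lie on the empty conic'' are literally the same condition---which is precisely the structure the stereographic projection was set up to encode.
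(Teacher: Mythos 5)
Your forward direction is sound, and it is essentially the paper's computation: when $p\notin C$ the image is a planar conic whose two points at infinity lie, by the intersection count of section~\ref{stereographic}, on the empty conic, and conics passing through that conjugate pair are exactly circles. The bookkeeping ($p\in\Pi$ if and only if $p\in C$, with the image collapsing to a line when $p\in C$) is also correct. The paper disposes of this direction even faster, by conformality of the stereographic projection, but your route through the intersection with $\mathbb{C}X_p$ is the same mechanism the paper uses in its own proof.

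The gap is in the direction you call ``immediate.'' You have read the lemma as: for a degree~$2$ curve $C$ in $S^3$ already in hand, $\deg\pi_p(C)=2$ if and only if $\pi_p(C)$ is a circle---under which the ``if'' half is indeed vacuous, since a circle is by definition a conic. But the content the paper assigns to the converse, and the content the subsequent corollary consumes, is different: given a \emph{circle} $D$ in $\mathbb{RP}^3$, its pullback under the stereographic projection is a curve of degree~$2$ in $S^3$, i.e.\ $D$ is the image of a curve of the \emph{same} degree rather than of a higher-degree one. This is not vacuous: the inverse of $\pi_p$ is given by quadratic polynomials whose base locus is the empty conic, so the lift of a conic has degree $4$ minus the number of its intersection points with the empty conic; a general (non-circular) conic misses the empty conic entirely and lifts to a degree~$4$ curve. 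The paper flags exactly this point: ``for the converse, we needed to show that the pullback was of the same degree.'' Your proposal never proves that circles lift with degree~$2$, so the corollary that knots built inductively from circles lift to knots of the same degree in $S^3$ does not follow from what you have shown. The repair is short and available inside your own setup: a circle's two points at infinity lie on the empty conic, so the bijection of section~\ref{stereographic} between degree-$d$ curves in $S^3$ avoiding $p$ and degree-$d$ curves in $\mathbb{RP}^3$ meeting the blow-up of $p$ in points on the empty conic, applied with $d=2$, exhibits every circle as the image of a degree~$2$ curve in $S^3$. The lemma should be stated and proved in that form.
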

\begin{proof}
  $\pi_2(C)$ is a conic  that lies on a plane which can be transformed via an orthogonal transformation to the plane defined by $x_0=0$. The plane defined by $x_0=0$ intersects the empty conic at the plane at infinity in the conjugate pair $[0:0:1:i]$ and $[0:0:1:-i]$. $\pi_2(C)$ would be the image under the stereographic projection of a curve of the same degree if and only if its two points of intersection with the plane at infinity lie on the empty conic. Since its ambient plane intersects the conic in $[0:0:1:i]$ and $[0:0:1:-i]$, $\pi_2(C)$  would have to contain these two points. It is easy to see that conics that lie on the plane $x_0=0$ and pass through these two points are circles. Indeed, circles may be characterized by conics which pass through this conjugate pair.

  Note that the ``only if'' part is easy because the stereographic projection is conformal. For the converse, we needed to show that the pullback was of the same degree. 
\end{proof}

\begin{corollary}
  Any knot in $\mathbb{RP}^3$ that can be constructed inductively by using only circles, can be pulled back to via the stereographic projection, to a knot of the same degree in $S^3$. 
\end{corollary}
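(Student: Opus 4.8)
The plan is to argue by induction on the number of circles used in the construction, combining the degree--empty-conic correspondence established before Theorem~\ref{maincorrespondence} with the $S^3$ version of Björklund's construction in Theorem~\ref{construction}. Throughout I would fix one round structure: choose coordinates so that the blow-up of the projection point $N$ is the plane at infinity and the absolute conic at infinity is the standard empty conic. Relative to this structure the inductive invariant I would carry is that \emph{every knot appearing in the construction meets the plane at infinity only in points lying on the empty conic}. By the correspondence recalled above, this is exactly the condition under which a degree-$d$ curve in $\mathbb{RP}^3$ pulls back via $\pi_N^{-1}$ to a degree-$d$ curve in $S^3$, so the invariant is tailored to give degree preservation for free.

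For the base case I would invoke the preceding lemma: a circle in $\mathbb{RP}^3$ is precisely a conic whose ambient plane meets the plane at infinity in a line cutting the empty conic in the conjugate imaginary pair through which the circle passes. Hence a circle's two points at infinity lie on the empty conic, so it satisfies the invariant and lifts to a degree-$2$ curve in $S^3$.

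For the inductive step, suppose $K_1$ and $K_2$ are knots produced earlier in the construction, each satisfying the invariant, and that the next stage joins them at a single point $p$ with linearly independent tangents. By Björklund's theorem the explicit parametrization $[p_0q_0:p_1q_0+q_1p_0:p_2q_0+q_2p_0:p_3q_0+q_3p_0]$ yields a knot of degree $\deg K_1+\deg K_2$ isotopic to a small perturbation of $K_1\cup K_2$ at $p$. The key point, already exploited in the proof of Theorem~\ref{construction}, is that this combination does not move the points at infinity: the intersection of the combined curve with the plane at infinity is exactly the union of the intersections of $K_1$ and $K_2$ with it. Since both of these lie on the empty conic by the induction hypothesis, so does their union; the combined knot therefore satisfies the invariant, and by the correspondence it pulls back to a knot of degree $\deg K_1+\deg K_2$ in $S^3$.

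The hard part is precisely the preservation of degree under the pullback, that is, keeping all intersection points with the plane at infinity on the empty conic; this is the same phenomenon the degree-$2$ lemma isolates, since a general conic pulls back to a higher-degree curve while only a circle remains degree $2$. Everything else is bookkeeping: the correspondence guarantees degree preservation the moment the empty-conic condition holds, and the invariance of the points at infinity under Björklund's combination is what propagates that condition through the induction. I would then conclude that the final knot, assembled from finitely many circles, lifts via $\pi_N^{-1}$ to a knot in $S^3$ of the same degree.
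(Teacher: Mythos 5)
Your proposal is correct and matches the paper's intended argument: the paper leaves the corollary as an immediate consequence of the circle lemma (base case) together with the key observation in the proof of Theorem~\ref{construction} that Bj\"orklund's combination leaves the points at infinity unchanged, which is exactly the invariant you propagate. Your write-up simply makes explicit the induction that the paper treats as implicit, so there is nothing to add.
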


Figure~\ref{fig:figure8} shows an example of the figure-eight knot of degree~$8$ constructed in $S^3$. The knot is constructed in $\mathbb{RP}^3$ using only circles. Therefore, by the previous corollary, it is possible to lift it back to a degree~$8$ curve in the sphere. 

\begin{figure}[t]
	\label{fig:figure8}
	\begin{center}
		\includegraphics[height=3cm]{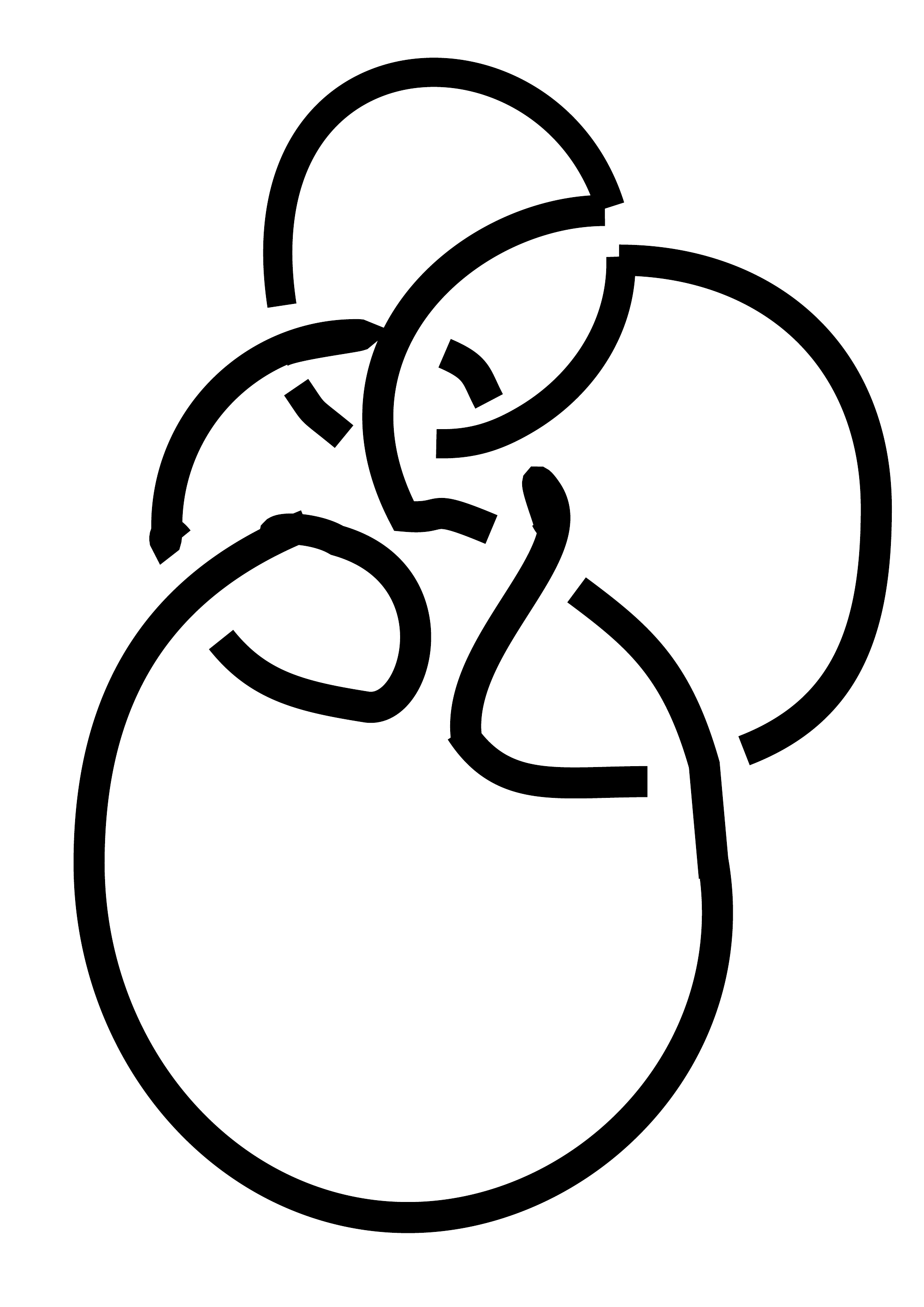}
	\caption{Figure-eight knot in $S^3$ realized by a degree~$8$ rational curve.}
	\end{center}
\end{figure}

The fact that we are allowed to use only circles (and not general conics) is a strong restriction. It prevents us from constructing the wall between the trefoil and figure eight knot as is demonstrated in the following section. The impossibility of this construction was already predicted by the classification.

\subsection{Example of a construction impossible in $S^3$}
\label{impossible}
\begin{center}
		\includegraphics[height=5cm]{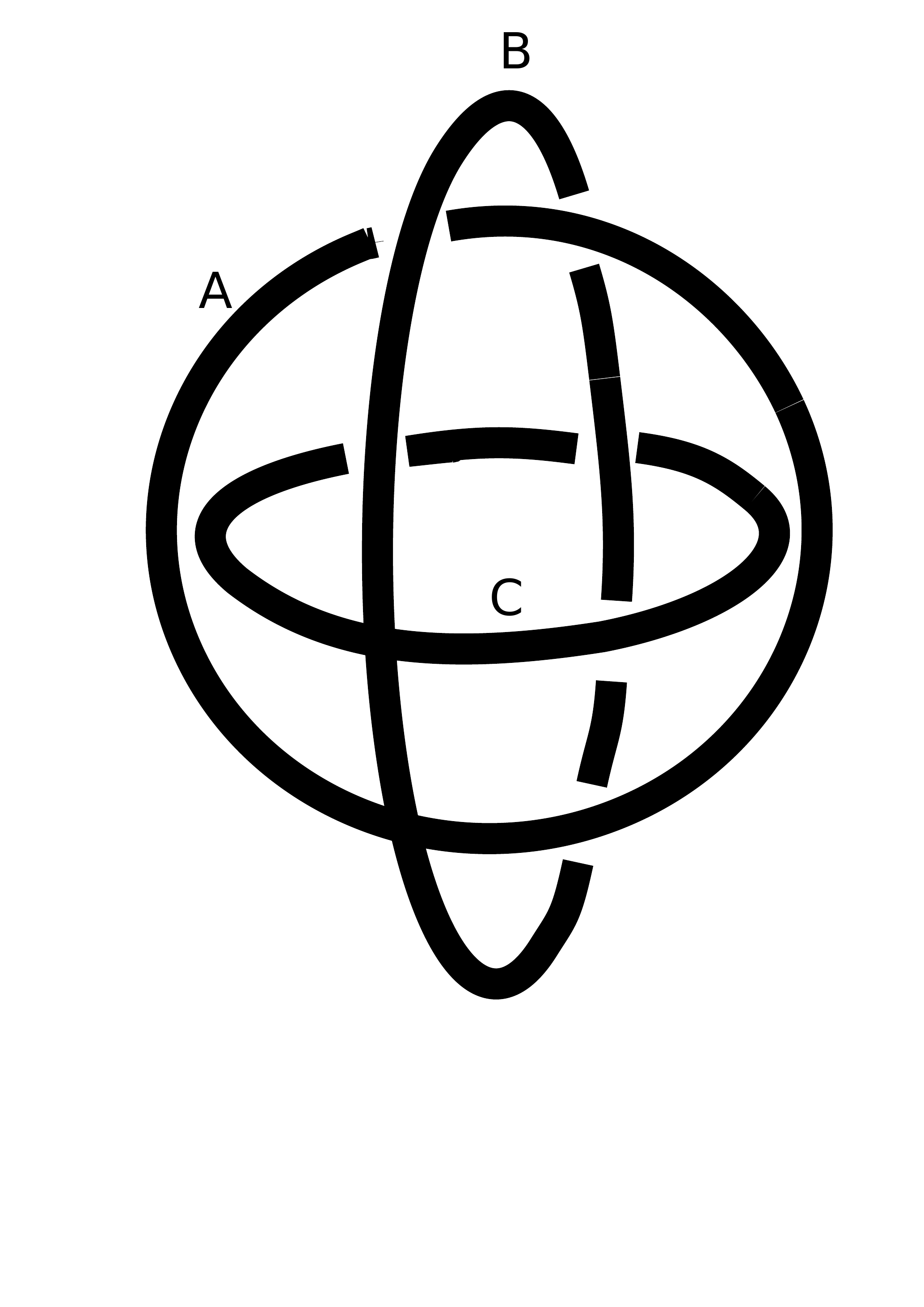}
	\end{center}
	\vspace{-1cm}
	In~\cite{bjorklund} constructed the figure of eight knot and the trefoil knot by perturbing the above union of ellipses. However, the figure eight knot should not be realizable by a degree~6 rational curve in $S^3$. The following remarks will show precisely why this construction fails in $S^3$. 
	
	If such a configuration came from the sphere via the stereographic projection, then  the conics would have to be circles because the only conics in the sphere are circles and the stereographic projection takes circles to circles.  
	
	In the figure, Circle B meets Circle C in the plane containing it but also meets that plane in another point. It will be shown that if Circle C is large enough to contain this other point in the interior component, it will be too large to remain unlinked with Circle A

		The following lemma will make it more helpful to consider the above configuration with a minor change: keeping the point of intersection of circle A and circle B fixed, circle A is shrunk to create another intersection point, like this: 		 
	\begin{center}
		\includegraphics[height=5cm]{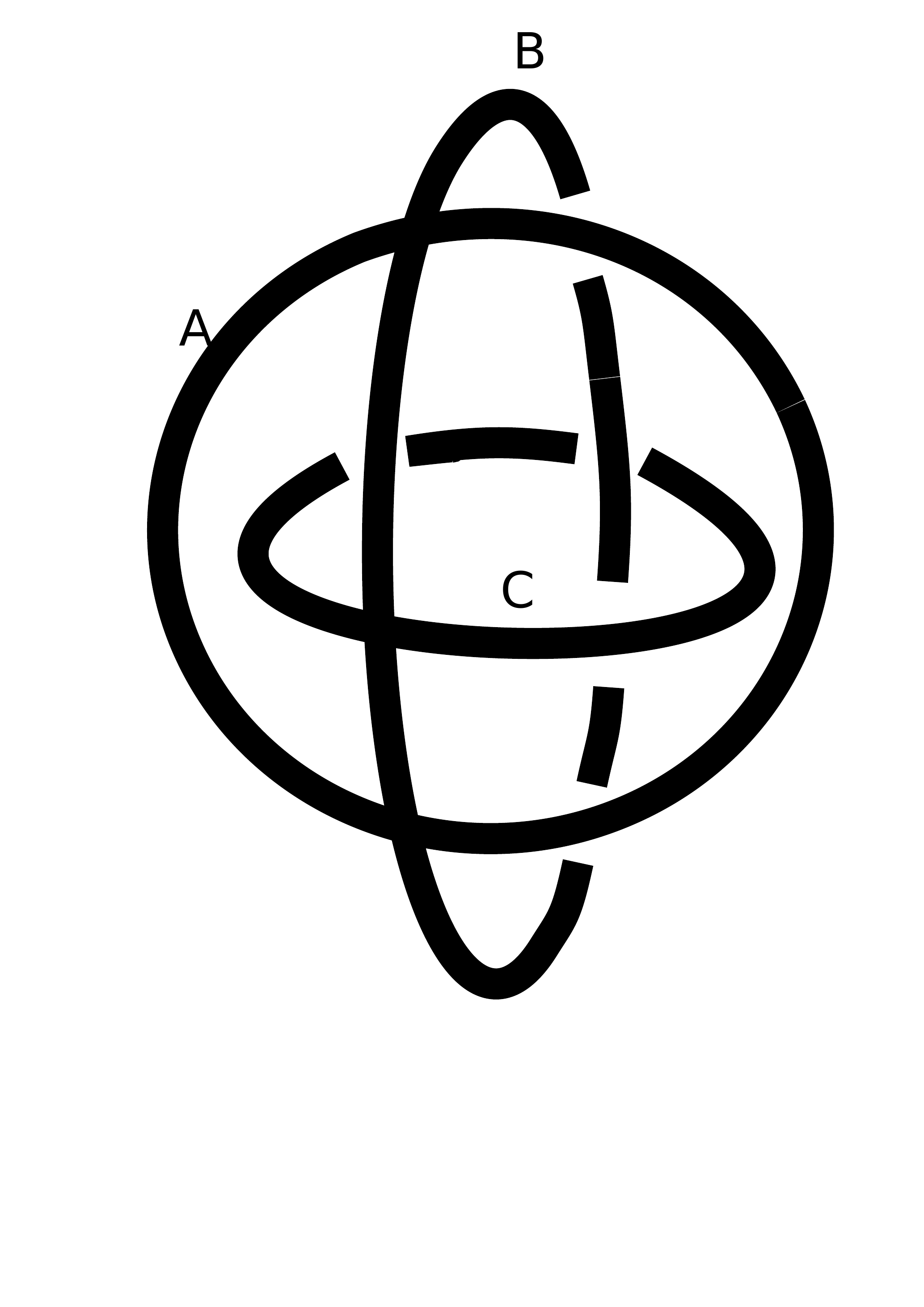}
	\end{center}
	\vspace{-1cm}
If the first configuration of circles is possible, so will the second configuration, but the second configuration will be easier to prove impossible because now the unions of circles A and B lie on a sphere:
\begin{lemma}
  The union of two circles in $\mathbb{RP}^3$ that intersect in two points, lies on a sphere. 
  \label{sphere}
\end{lemma}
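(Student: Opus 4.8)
The plan is to exhibit both circles as members of the linear system of quadrics cut out by spheres, and then to produce a common sphere by a dimension count that exploits the two shared points to keep the number of conditions small. Recall that in $\mathbb{RP}^3$ a sphere is a quadric whose intersection with the plane at infinity is the fixed absolute (empty) conic $\Omega$; in suitable coordinates the spheres are exactly the quadrics $\mu(x_1^2+x_2^2+x_3^2)+x_0\ell=0$ with $\ell$ a linear form, so they form a $5$-dimensional vector space $V$ of quadratic forms. By the characterization of circles used earlier, a circle is a planar conic whose complexification passes through the two conjugate imaginary points in which its plane meets $\Omega$. The observation that drives the whole argument is that these two circular points lie on $\Omega$, hence on \emph{every} sphere; thus any sphere automatically contains the circular points of the plane of any circle.

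First I would prove the reduction that a sphere passing through three distinct real points of a circle $C$ must contain all of $C$. Let $S\in V$ pass through three real points $p_1,p_2,p_3\in C$, and let $\Pi$ be the plane of $C$. If $S\supseteq\Pi$ there is nothing to prove; otherwise $S\cap\Pi$ is a conic in $\Pi$ which passes through the five points $p_1,p_2,p_3$ together with the two circular points of $\Pi$ that $S$ contains automatically. Since no three of these five points are collinear (three distinct points of a conic are never collinear, and the imaginary circular points lie on the line at infinity of $\Pi$ while the $p_i$ are finite), they are in general position and determine a unique conic, which is therefore $C$; hence $C=S\cap\Pi\subseteq S$.

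With this in hand I would let $C_1,C_2$ be the two circles, meeting in the points $p$ and $p'$, and choose a third real point $a\in C_1$ and $b\in C_2$. Passing through a prescribed point is one linear condition on $V$, so requiring a sphere to pass through the four points $p,p',a,b$ cuts out a subspace of $V$ of dimension at least $5-4=1$; I would pick any nonzero $S$ in it. Then $S$ passes through the three points $p,p',a$ of $C_1$ and through the three points $p,p',b$ of $C_2$, so by the reduction $S\supseteq C_1$ and $S\supseteq C_2$, and both circles lie on the single quadric $S$.

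The hard part will be to guarantee that the quadric $S$ so produced is a genuine sphere rather than a degenerate member of $V$, and to confirm the general-position hypothesis. General position I would settle as indicated above. For non-degeneracy I would assume $C_1$ and $C_2$ are not coplanar (treating the coplanar configuration separately) and argue that $S$ cannot degenerate: a pair of \emph{real} planes is never a sphere because $x_1^2+x_2^2+x_3^2$ is irreducible over $\mathbb{R}$ and so does not factor into two real linear forms, while a pair of conjugate imaginary planes, a single point, or an empty quadric cannot contain two real circles, each carrying infinitely many real points. Hence $S$ must be a non-degenerate sphere, which completes the argument; the main work, and the step most likely to require care, is precisely this exclusion of degenerate members of $V$.
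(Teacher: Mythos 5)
Your proof is correct and follows essentially the same route as the paper's own argument, which likewise takes a sphere through the two intersection points plus one additional point on each circle and then concludes from the fact that a sphere sharing three points with a circle must contain it. Your write-up just supplies the details the paper leaves implicit---the dimension count in the linear system of spheres, the circular-points/five-points-determine-a-conic justification of the containment step, and the exclusion of degenerate members---with the one caveat that a member of your system with $\mu=0$ \emph{is} a pair of real planes (one of them the plane at infinity), so it is excluded not by the irreducibility of $x_1^2+x_2^2+x_3^2$ but by your adjacent observation that such a quadric cannot contain two non-coplanar circles, non-coplanarity being the same tacit hypothesis that the paper's proof needs for its four chosen points to determine a genuine sphere.
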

\begin{proof}
  There is a sphere containing the two intersection points and one additional point from each of the two circles (since they are only four in number). The sphere shares three points with each circle and therefore contains them.
\end{proof}

 The plane containing circle C will intersect the sphere containing circles A and B in following way:
	\vspace{-1cm}
	\begin{center}
		\includegraphics[height=5cm]{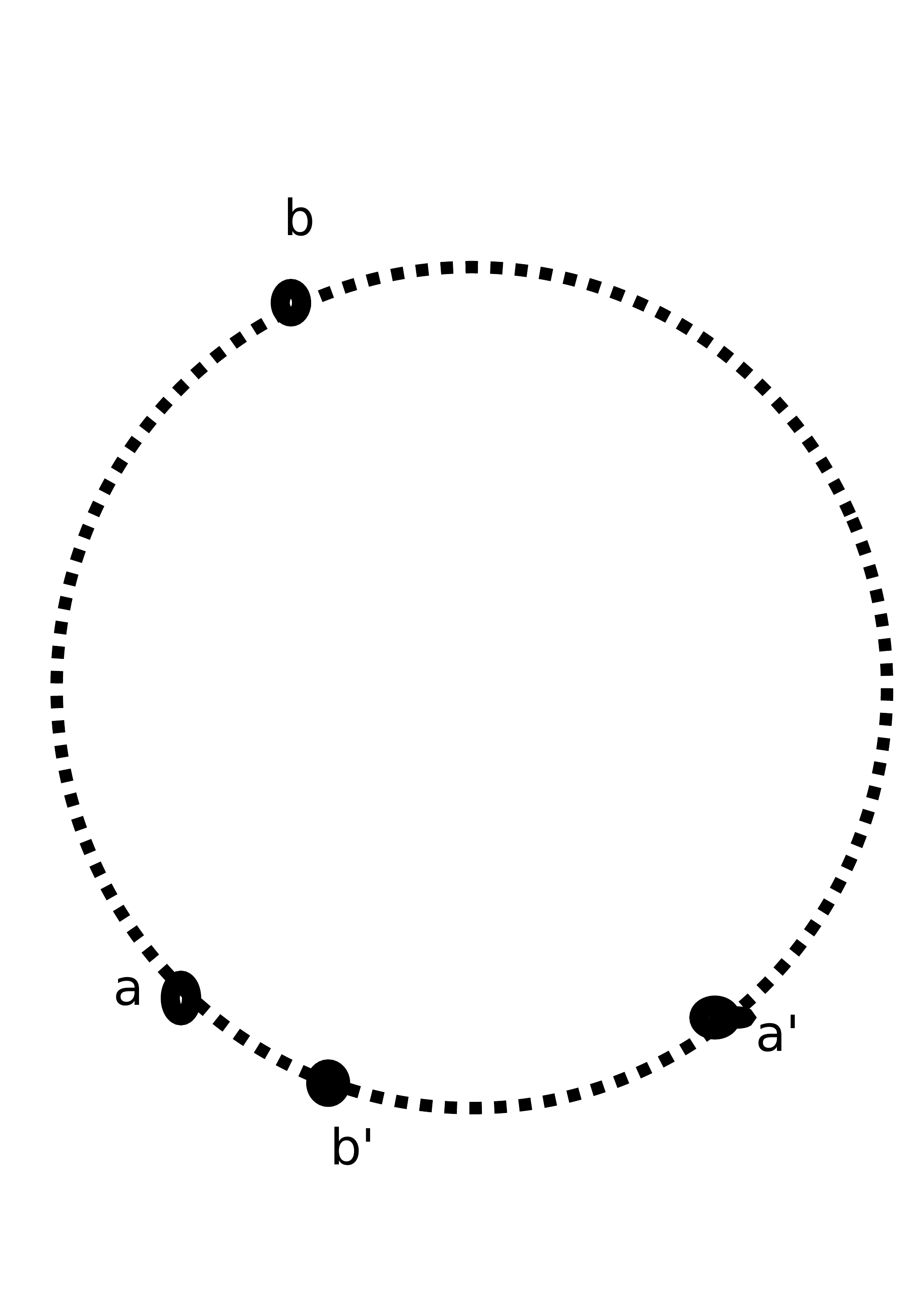}
	\end{center}
	\vspace{-0.8cm}
Here a and a$'$ are the intersections of circle A with the plane, while b and b$'$ are intersections of the circle B with the plane. The dotted circle is the intersection of the sphere with the plane. 

Points a and a$'$ divide the dotted circle into two arcs, one that contains b and the other that contains b$'$. 

Circle C passes through b$'$ and will intersect the dotted circle in one more point. If that point is on the arc aa$'$ containing the point b then circle C and circle A would have a non-zero linking number, which we know is not the case. Therefore both points of intersection of circle C with the dotted circle lie on the arc aa$'$ containing b$'$. The arc of the dotted circle that is in the interior of circle C must therefore be disjoint from the arc aa$'$ containing b. This proves that b cannot lie in the interior of circle C which is a contradiction.
	\begin{figure}[h]
	\begin{center}
		\includegraphics[height=4cm]{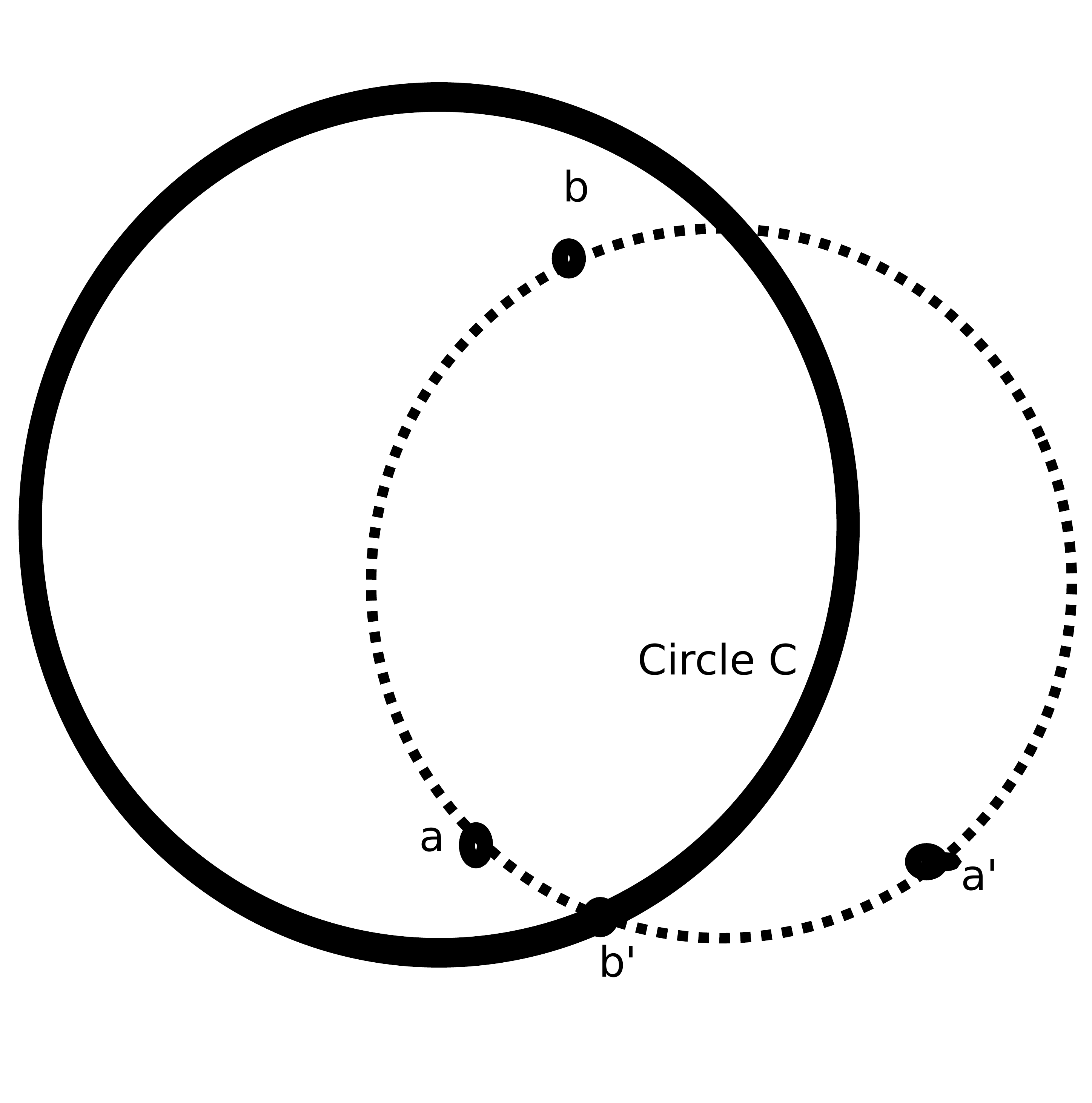}
		\includegraphics[height=4cm]{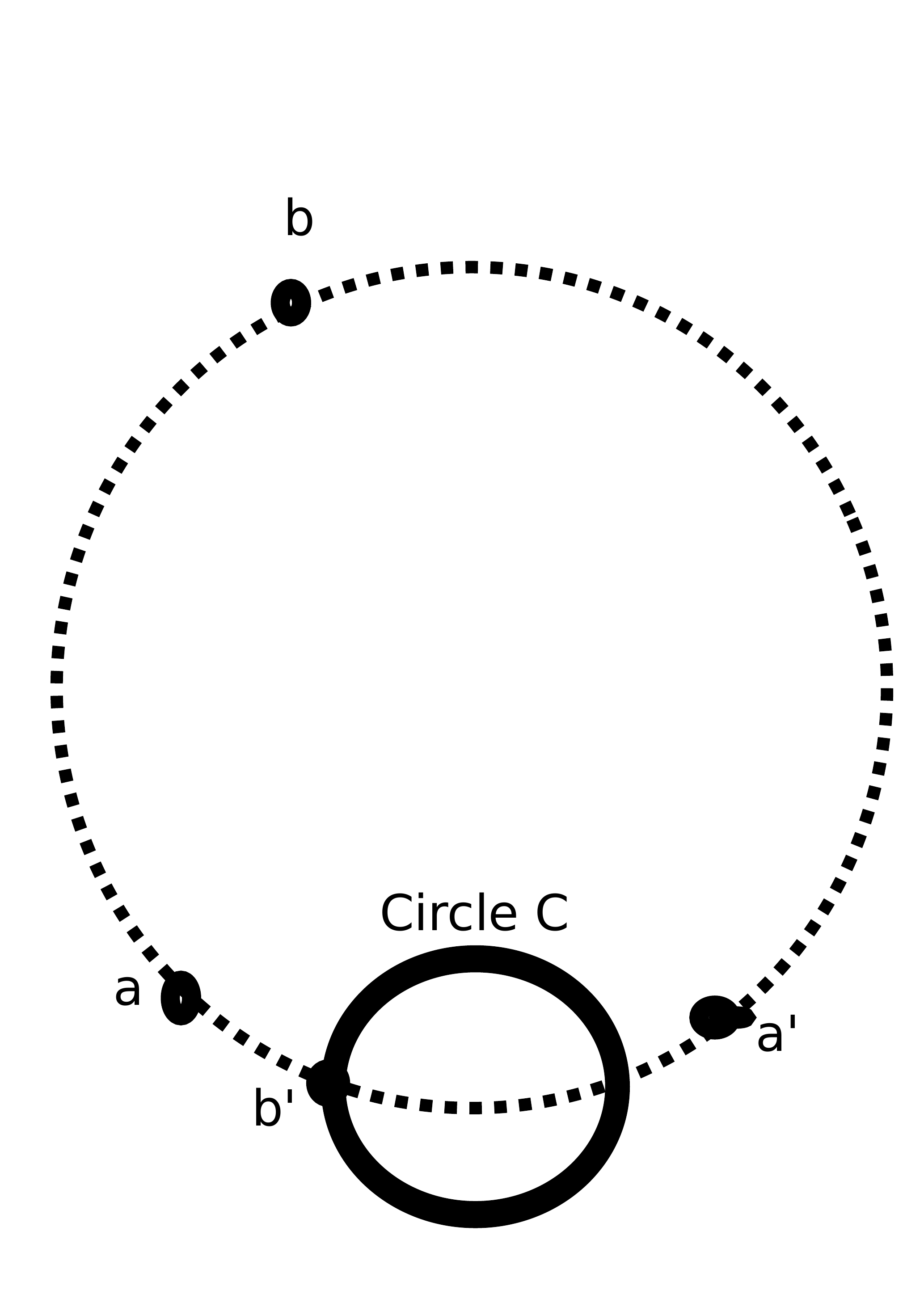}
	\end{center}
		\caption{Circle C will either link with circle A or not contain b. In the second case the arc contained in the interior of Circle C is disjoint from the arc aa$'$ containing b.}
	\end{figure}
	\section{Acknowledgements}
	I am grateful to Prof. Oleg Viro for suggesting this problem and for many valuable ideas, suggestions, and discussions. 
\bibliography{references}{}
\bibliographystyle{acm}
\end{document}